\providecommand{\algorithmname}{Algorithm}
\numberwithin{figure}{section}
\numberwithin{equation}{section}
\theoremstyle{plain}
\newtheorem{thm}{\protect\theoremname}
\theoremstyle{plain}
\newtheorem{lem}[thm]{\protect\lemmaname}
\theoremstyle{plain}
\newtheorem{assumption}[thm]{\protect\assumptionname}
\theoremstyle{definition}
\newtheorem{problem}[thm]{\protect\problemname}
\theoremstyle{definition}
\newtheorem{defn}[thm]{\protect\definitionname}
\theoremstyle{definition}
\newtheorem{rem}[thm]{Remark}
\theoremstyle{plain}
\theoremstyle{plain}
\newtheorem{cor}[thm]{\protect\corollaryname}
\renewcommand{\paragraph}{%
  \@startsection{paragraph}{4}%
  {\z@}{1.25ex \@plus 1ex \@minus .2ex}{-1em}%
  {\normalfont\normalsize\bfseries}%
}
\providecommand{\assumptionname}{Assumption}
\providecommand{\definitionname}{Definition}
\providecommand{\lemmaname}{Lemma}
\providecommand{\problemname}{Problem}
\providecommand{\propositionname}{Proposition}
\providecommand{\theoremname}{Theorem}
\providecommand{\corollaryname}{Corollary}
\begin{document}
\def\balign#1\ealign{\begin{align}#1\end{align}}
\def\baligns#1\ealigns{\begin{align*}#1\end{align*}}
\def\balignat#1\ealign{\begin{alignat}#1\end{alignat}}
\def\balignats#1\ealigns{\begin{alignat*}#1\end{alignat*}}
\def\bitemize#1\eitemize{\begin{itemize}#1\end{itemize}}
\def\benumerate#1\eenumerate{\begin{enumerate}#1\end{enumerate}}

\newenvironment{talign*}
 {\let\displaystyle\textstyle\csname align*\endcsname}
 {\endalign}
\newenvironment{talign}
 {\let\displaystyle\textstyle\csname align\endcsname}
 {\endalign}

\def\balignst#1\ealignst{\begin{talign*}#1\end{talign*}}
\def\balignt#1\ealignt{\begin{talign}#1\end{talign}}

\let\originalleft\left
\let\originalright\right
\renewcommand{\left}{\mathopen{}\mathclose\bgroup\originalleft}
\renewcommand{\right}{\aftergroup\egroup\originalright}

\def\Gronwall{Gr\"onwall\xspace}
\def\Holder{H\"older\xspace}
\def\Ito{It\^o\xspace}
\def\Nystrom{Nystr\"om\xspace}
\def\Schatten{Sch\"atten\xspace}
\def\Matern{Mat\'ern\xspace}

\def\tinycitep*#1{{\tiny\citep*{#1}}}
\def\tinycitealt*#1{{\tiny\citealt*{#1}}}
\def\tinycite*#1{{\tiny\cite*{#1}}}
\def\smallcitep*#1{{\scriptsize\citep*{#1}}}
\def\smallcitealt*#1{{\scriptsize\citealt*{#1}}}
\def\smallcite*#1{{\scriptsize\cite*{#1}}}

\def\blue#1{\textcolor{blue}{{#1}}}
\def\green#1{\textcolor{green}{{#1}}}
\def\orange#1{\textcolor{orange}{{#1}}}
\def\purple#1{\textcolor{purple}{{#1}}}
\def\red#1{\textcolor{red}{{#1}}}
\def\teal#1{\textcolor{teal}{{#1}}}

\def\mbi#1{\boldsymbol{#1}} 
\def\mbf#1{\mathbf{#1}}
\def\mrm#1{\mathrm{#1}}
\def\tbf#1{\textbf{#1}}
\def\tsc#1{\textsc{#1}}

\def\mbiA{\mbi{A}}
\def\mbiB{\mbi{B}}
\def\mbiC{\mbi{C}}
\def\mbiDelta{\mbi{\Delta}}
\def\mbif{\mbi{f}}
\def\mbiF{\mbi{F}}
\def\mbih{\mbi{g}}
\def\mbiG{\mbi{G}}
\def\mbih{\mbi{h}}
\def\mbiH{\mbi{H}}
\def\mbiI{\mbi{I}}
\def\mbim{\mbi{m}}
\def\mbiP{\mbi{P}}
\def\mbiQ{\mbi{Q}}
\def\mbiR{\mbi{R}}
\def\mbiv{\mbi{v}}
\def\mbiV{\mbi{V}}
\def\mbiW{\mbi{W}}
\def\mbiX{\mbi{X}}
\def\mbiY{\mbi{Y}}
\def\mbiZ{\mbi{Z}}

\def\textsum{{\textstyle\sum}} 
\def\textprod{{\textstyle\prod}} 
\def\textbigcap{{\textstyle\bigcap}} 
\def\textbigcup{{\textstyle\bigcup}} 

\def\reals{\mathbb{R}} 
\def\integers{\mathbb{Z}} 
\def\rationals{\mathbb{Q}} 
\def\naturals{\mathbb{N}} 
\def\complex{\mathbb{C}} 

\def\what#1{\widehat{#1}}

\def\twovec#1#2{\left[\begin{array}{c}{#1} \\ {#2}\end{array}\right]}
\def\threevec#1#2#3{\left[\begin{array}{c}{#1} \\ {#2} \\ {#3} \end{array}\right]}
\def\nvec#1#2#3{\left[\begin{array}{c}{#1} \\ {#2} \\ \vdots \\ {#3}\end{array}\right]} 

\def\maxeig#1{\lambda_{\mathrm{max}}\left({#1}\right)}
\def\mineig#1{\lambda_{\mathrm{min}}\left({#1}\right)}

\def\Re{\operatorname{Re}} 
\def\indic#1{\mbb{I}\left[{#1}\right]} 
\def\logarg#1{\log\left({#1}\right)} 
\def\polylog{\operatorname{polylog}}
\def\maxarg#1{\max\left({#1}\right)} 
\def\minarg#1{\min\left({#1}\right)} 
\def\Earg#1{\E\left[{#1}\right]}
\def\Esub#1{\E_{#1}}
\def\Esubarg#1#2{\E_{#1}\left[{#2}\right]}
\def\bigO#1{\mathcal{O}\left(#1\right)} 
\def\littleO#1{o(#1)} 
\def\P{\mbb{P}} 
\def\Parg#1{\P\left({#1}\right)}
\def\Psubarg#1#2{\P_{#1}\left[{#2}\right]}
\def\Trarg#1{\Tr\left[{#1}\right]} 
\def\trarg#1{\tr\left[{#1}\right]} 
\def\Var{\mrm{Var}} 
\def\Vararg#1{\Var\left[{#1}\right]}
\def\Varsubarg#1#2{\Var_{#1}\left[{#2}\right]}
\def\Cov{\mrm{Cov}} 
\def\Covarg#1{\Cov\left[{#1}\right]}
\def\Covsubarg#1#2{\Cov_{#1}\left[{#2}\right]}
\def\Corr{\mrm{Corr}} 
\def\Corrarg#1{\Corr\left[{#1}\right]}
\def\Corrsubarg#1#2{\Corr_{#1}\left[{#2}\right]}
\newcommand{\info}[3][{}]{\mathbb{I}_{#1}\left({#2};{#3}\right)} 
\newcommand{\staticexp}[1]{\operatorname{exp}(#1)} 
\newcommand{\loglihood}[0]{\mathcal{L}} 


\providecommand{\arccos}{\mathop\mathrm{arccos}}
\providecommand{\dom}{\mathop\mathrm{dom}}
\providecommand{\diag}{\mathop\mathrm{diag}}
\providecommand{\tr}{\mathop\mathrm{tr}}
\providecommand{\card}{\mathop\mathrm{card}}
\providecommand{\sign}{\mathop\mathrm{sign}}
\providecommand{\conv}{\mathop\mathrm{conv}} 
\def\rank#1{\mathrm{rank}({#1})}
\def\supp#1{\mathrm{supp}({#1})}

\providecommand{\minimize}{\mathop\mathrm{minimize}}
\providecommand{\maximize}{\mathop\mathrm{maximize}}
\providecommand{\subjectto}{\mathop\mathrm{subject\;to}}

\def\openright#1#2{\left[{#1}, {#2}\right)}

\ifdefined\nonewproofenvironments\else
\ifdefined\ispres\else
 
\fi
\makeatletter
\@addtoreset{equation}{section}
\makeatother
\def\theequation{\thesection.\arabic{equation}}

\newcommand{\cmark}{\ding{51}}

\newcommand{\xmark}{\ding{55}}

\newcommand{\eq}[1]{\begin{align}#1\end{align}}
\newcommand{\eqn}[1]{\begin{align*}#1\end{align*}}
\newcommand{\Ex}[1]{\mathbb{E}\left[#1\right]}

\newcommand{\matt}[1]{{\textcolor{Maroon}{[Matt: #1]}}}
\newcommand{\kook}[1]{{\textcolor{blue}{[Kook: #1]}}}
\definecolor{OliveGreen}{rgb}{0,0.6,0}

\global\long\def\on#1{\operatorname{#1}}%

\global\long\def\bw{\mathsf{Ball\ walk}}%
\global\long\def\sw{\mathsf{Speedy\ walk}}%
\global\long\def\gw{\mathsf{Gaussian\ walk}}%
\global\long\def\ps{\mathsf{Proximal\ sampler}}%
\global\long\def\sps{\mathsf{Proximal\ Gaussian\ cooling}}%

\global\long\def\har{\mathsf{Hit\text{-}and\text{-}run}}%
\global\long\def\gc{\mathsf{Gaussian\ cooling}}%
\global\long\def\ino{\mathsf{\mathsf{In\text{-}and\text{-}Out}}}%

\global\long\def\O{\mathcal{O}}%
\global\long\def\Otilde{\widetilde{\mathcal{O}}}%
\global\long\def\Omtilde{\widetilde{\Omega}}%

\global\long\def\E{\mathbb{E}}%
\global\long\def\Z{\mathbb{Z}}%
\global\long\def\P{\mathbb{P}}%
\global\long\def\N{\mathbb{N}}%
\global\long\def\K{\mathcal{K}}%

\global\long\def\R{\mathbb{R}}%
\global\long\def\Rd{\mathbb{R}^{d}}%
\global\long\def\Rdd{\mathbb{R}^{d\times d}}%
\global\long\def\Rn{\mathbb{R}^{n}}%
\global\long\def\Rnn{\mathbb{R}^{n\times n}}%

\global\long\def\psd{\mathbb{S}_{+}^{d}}%
\global\long\def\pd{\mathbb{S}_{++}^{d}}%

\global\long\def\defeq{\stackrel{\mathrm{{\scriptscriptstyle def}}}{=}}%

\global\long\def\veps{\varepsilon}%
\global\long\def\lda{\lambda}%
\global\long\def\vphi{\varphi}%
\global\long\def\cpi{C_{\mathsf{PI}}}%

\global\long\def\half{\frac{1}{2}}%
\global\long\def\nhalf{\nicefrac{1}{2}}%
\global\long\def\texthalf{{\textstyle \frac{1}{2}}}%

\global\long\def\ind{\mathds{1}}%
\global\long\def\op{\mathsf{op}}%

\global\long\def\chooses#1#2{_{#1}C_{#2}}%

\global\long\def\vol{\on{vol}}%

\global\long\def\law{\on{law}}%

\global\long\def\tr{\on{tr}}%

\global\long\def\diag{\on{diag}}%

\global\long\def\Diag{\on{Diag}}%

\global\long\def\inter{\on{int}}%

\global\long\def\esssup{\on{ess\,sup}}%

\global\long\def\polylog{\on{polylog}}%

\global\long\def\var{\on{var}}%

\global\long\def\e{\mathrm{e}}%

\global\long\def\id{\mathrm{id}}%

\global\long\def\spanning{\on{span}}%

\global\long\def\rows{\on{row}}%

\global\long\def\cols{\on{col}}%

\global\long\def\rank{\on{rank}}%

\global\long\def\T{\mathsf{T}}%

\global\long\def\bs#1{\boldsymbol{#1}}%

\global\long\def\eu#1{\EuScript{#1}}%

\global\long\def\mb#1{\mathbf{#1}}%

\global\long\def\mbb#1{\mathbb{#1}}%

\global\long\def\mc#1{\mathcal{#1}}%

\global\long\def\mf#1{\mathfrak{#1}}%

\global\long\def\ms#1{\mathscr{#1}}%

\global\long\def\mss#1{\mathsf{#1}}%

\global\long\def\msf#1{\mathsf{#1}}%

\global\long\def\textint{{\textstyle \int}}%
\global\long\def\Dd{\mathrm{D}}%
\global\long\def\D{\mathrm{d}}%
\global\long\def\grad{\nabla}%
 
\global\long\def\hess{\nabla^{2}}%
 
\global\long\def\lapl{\triangle}%
 
\global\long\def\deriv#1#2{\frac{\D#1}{\D#2}}%
 
\global\long\def\pderiv#1#2{\frac{\partial#1}{\partial#2}}%
 
\global\long\def\de{\partial}%
\global\long\def\lagrange{\mathcal{L}}%
\global\long\def\Div{\on{div}}%

\global\long\def\Gsn{\mathcal{N}}%
 
\global\long\def\BeP{\textnormal{BeP}}%
 
\global\long\def\Ber{\textnormal{Ber}}%
 
\global\long\def\Bern{\textnormal{Bern}}%
 
\global\long\def\Bet{\textnormal{Beta}}%
 
\global\long\def\Beta{\textnormal{Beta}}%
 
\global\long\def\Bin{\textnormal{Bin}}%
 
\global\long\def\BP{\textnormal{BP}}%
 
\global\long\def\Dir{\textnormal{Dir}}%
 
\global\long\def\DP{\textnormal{DP}}%
 
\global\long\def\Expo{\textnormal{Expo}}%
 
\global\long\def\Gam{\textnormal{Gamma}}%
 
\global\long\def\GEM{\textnormal{GEM}}%
 
\global\long\def\HypGeo{\textnormal{HypGeo}}%
 
\global\long\def\Mult{\textnormal{Mult}}%
 
\global\long\def\NegMult{\textnormal{NegMult}}%
 
\global\long\def\Poi{\textnormal{Poi}}%
 
\global\long\def\Pois{\textnormal{Pois}}%
 
\global\long\def\Unif{\textnormal{Unif}}%

\global\long\def\bpar#1{\bigl(#1\bigr)}%
\global\long\def\Bpar#1{\Bigl(#1\Bigr)}%

\global\long\def\snorm#1{\|#1\|}%
\global\long\def\bnorm#1{\bigl\Vert#1\bigr\Vert}%
\global\long\def\Bnorm#1{\Bigl\Vert#1\Bigr\Vert}%

\global\long\def\sbrack#1{[#1]}%
\global\long\def\bbrack#1{\bigl[#1\bigr]}%
\global\long\def\Bbrack#1{\Bigl[#1\Bigr]}%

\global\long\def\sbrace#1{\{#1\}}%
\global\long\def\bbrace#1{\bigl\{#1\bigr\}}%
\global\long\def\Bbrace#1{\Bigl\{#1\Bigr\}}%

\global\long\def\Abs#1{|#1|}%
\global\long\def\Par#1{\left(#1\right)}%
\global\long\def\Brack#1{\left[#1\right]}%
\global\long\def\Brace#1{\left\{  #1\right\}  }%

\global\long\def\inner#1{\langle#1\rangle}%
 
\global\long\def\binner#1#2{\left\langle {#1},{#2}\right\rangle }%

\global\long\def\norm#1{\|#1\|}%
\global\long\def\onenorm#1{\norm{#1}_{1}}%
\global\long\def\twonorm#1{\norm{#1}_{2}}%
\global\long\def\infnorm#1{\norm{#1}_{\infty}}%
\global\long\def\fronorm#1{\norm{#1}_{\text{F}}}%
\global\long\def\nucnorm#1{\norm{#1}_{*}}%
\global\long\def\staticnorm#1{\|#1\|}%
\global\long\def\statictwonorm#1{\staticnorm{#1}_{2}}%

\global\long\def\mmid{\mathbin{\|}}%

\global\long\def\otilde#1{\widetilde{\mc O}(#1)}%
\global\long\def\wtilde{\widetilde{W}}%
\global\long\def\wt#1{\widetilde{#1}}%

\global\long\def\KL{\msf{KL}}%
\global\long\def\dtv{d_{\textrm{\textup{TV}}}}%
\global\long\def\FI{\msf{FI}}%
\global\long\def\tv{\msf{TV}}%

\global\long\def\cov{\mathrm{Cov}}%
\global\long\def\PI{\msf{PI}}%

\global\long\def\cred#1{\textcolor{red}{#1}}%
\global\long\def\cblue#1{\textcolor{blue}{#1}}%
\global\long\def\cgreen#1{\textcolor{green}{#1}}%
\global\long\def\ccyan#1{\textcolor{cyan}{#1}}%

\global\long\def\iff{\Leftrightarrow}%
 
\global\long\def\textfrac#1#2{{\textstyle \frac{#1}{#2}}}%

\newcommand{\NA}{N_{\mrm{ker}}}
\newcommand{\NB}{N_{\mrm{tot}}}

\newcommand\deq{\coloneqq}

\newcommand\fp{\mathfrak{p}}
\title{Covariance estimation using Markov chain Monte Carlo}
\author{
Yunbum Kook\thanks{
  School of Computer Science,
  Georgia Institute of Technology, \texttt{yb.kook@gatech.edu}
} \ \ \ \ \ \ \ \ \ \ \ \ \ \ \ \ \ 
Matthew S.\ Zhang\thanks{
  Dept. of Computer Science,
  University of Toronto, and Vector Institute, \texttt{matthew.zhang@mail.utoronto.ca}
}
}

\maketitle
\begin{abstract}
We investigate the complexity of covariance matrix estimation for Gibbs distributions based on dependent samples from a Markov chain. We show that when $\pi$ satisfies a Poincar\'e inequality and the chain possesses a spectral gap, we can achieve similar sample complexity using MCMC as compared to an estimator constructed using i.i.d. samples, with potentially much better query complexity. As an application of our methods, we show improvements for the query complexity in both constrained and unconstrained settings for concrete instances of MCMC. In particular, we provide guarantees regarding isotropic rounding procedures for sampling uniformly on convex bodies.
\end{abstract}

\section{Introduction}

We focus on the problem of mean and covariance estimation using Markov chain Monte Carlo (MCMC) methods. One setting which often arises in practice is when we want to approximately estimate these statistics for a distribution $\pi \propto \exp(-V)$, where we are only given query access to $V: \R^d \to \R$ and its gradient. 
For example, this occurs when $\pi$ is given by a Bayesian posterior for a large dataset. As it is impossible in general to obtain the normalizing constant for the measure, one can only hope to obtain approximate samples from $\pi$ via a MCMC procedure. 

Precisely, suppose we are given some $\pi$-stationary Markov kernel $P: \R^d \times \mc B(\R^d) \to \R$, with $P(A\,|\,x) \deq P(x, A)$ and an initial  tractable probability measure $\pi_0$.
Then, the MCMC scheme corresponding to $(\pi_0, P)$ generates the sequence $\{X_k\}_{k \in [N]}$ through the procedure
\begin{equation}\label{eq:mcmc}
    X_0 \sim \pi_0\,, \qquad \qquad X_k \sim P(\cdot\,\vert\, X_{k-1}) \qquad \text{for all } k = 1, \dotsc, N\,.
\end{equation}
The cost for this algorithm is quantified by the total number $\NB$ of queries; we are interested in its dependence on $d$ and other problem parameters to be defined later.

While the standard theory for covariance estimation assumes \emph{independent and identically distributed} (i.i.d.) samples, in practice statisticians take them from a sequence generated by some Markov chain (despite the loss of the i.i.d. property); they use the iterates $\{X_{N_{\mrm{burn}} + k}\}_{k\leq N_{\mrm{samp}}}$ to construct their estimates, with the ``burn-in'' time $N_{\mrm{burn}}$ needed to generate a good initial sample. From this, the next $N_{\mrm{samp}}$ samples are collected and used for the statistical estimation process, with $N_{\mrm{samp}}$ chosen so that we can construct an estimator that is suitably concentrated around the quantity of interest.
Given that in most circumstances the cost of generating a \emph{warm}-start (i.e., finding a $\pi_0$ that is $\mc O(1)$-close to $\pi)$ dominates that of bringing $\pi_0$ to $\pi$ within desired accuracy (i.e.\ $N_{\mrm{burn}} \gg N_{\mrm{samp}}$),  
it is much more cost-effective to continue evolving the Markov chain from the previous sample, rather than restarting from ground up.

However, na\"ively applying the standard i.i.d. concentration analysis for the resulting estimands constructed with \emph{dependent} samples does not lead to a useful bound. Indeed, without leveraging an additional structural property for $P$ such as spectral gap, then the estimand could never concentrate even as $N_{\mrm{samp}} \to \infty$ (for instance when $P = \operatorname{id}$).

Recent years have seen a wealth of results establishing convergence guarantees for various MCMC algorithms; see~\cite{chewi2024log} for a thorough exposition. These results generally show the exponential convergence\footnote{Up to a numerical error term induced by the discretization of the procedure.} of $\law(X_k)$ to $\pi$ in various strong notions of ``distance'', so long as $\pi$ satisfies a suitable \emph{functional inequality}. The present work operates in the setting where $\pi$ satisfies a \textbf{Poincar\'e inequality}~\eqref{eq:poincare} with constant $\cpi$, and also assumes that $P$ possesses a \textbf{spectral gap} of $\lambda > 0$. See \S\ref{ssec:background} for precise definitions of these two concepts; intuitively, the Poincar\'e inequality says that the tail of $\pi$ should decay on the order of $\exp(-\norm{x}/\sqrt{\cpi})$, while the spectral gap guarantees that $\law(X_k)$ converges exponentially to $\pi$ in an $L^2$ sense. 

While these two assumptions may appear unrelated at a first glance, for many settings they are actually equivalent. The model example is the Langevin diffusion with target distribution $\pi$, a stochastic process whose discretizations form the bedrock for many sampling algorithms. In this case, spectral gap of the diffusion run to a time $h$ is \emph{equivalent} to a Poincar\'e inequality on its target.
Another case is the \emph{proximal sampler} \cite{lee2021structured,chen2022improved}.
Running this sampler with step size $h$ ensures that $\chi^{2}(\mu P^k\mmid\pi)\leq (1+h/\cpi(\pi))^{-2k} \chi^{2}(\mu \mmid\pi)$. Iterating this $h^{-1}\cpi(\pi)$ times consecutively will generate a Markov kernel with spectral gap arbitrarily close to $1$. 

Although there are other known approaches for handling dependence of samples (e.g., $\alpha$-mixing, see \S\ref{sec:related-work}), this pair of assumptions will turn out to be handy since they capture many settings of interest. Firstly, the Poincar\'e inequality holds across a broad class of distributions. In particular, a celebrated recent result in functional analysis shows that~\eqref{eq:poincare} holds with constant $\O(\norm{\text{Cov}(\pi)}_{\op}\log d)$ for \emph{any} log-concave distribution~\cite{klartag2023logarithmic}, which includes the uniform distribution over a convex body. Secondly, many standard MCMC algorithms~\cite{lovasz2006hit, chen2022improved, kook2024and} yield kernels with spectral gaps under~\eqref{eq:poincare}. 
Thus, determining the sample and query complexity needed for accurate statistical estimation under these assumptions is of paramount importance.

Encouragingly, \cite{neeman2023concentration} shows that under suitable assumptions on spectral gap, the sum of a Markovian sequence of matrices will satisfy a Bernstein-type inequality. Motivated by these efforts, we investigate the following research question:
\begin{center}
    \textbf{Question:} What is the sample and query complexity of covariance estimation for a probability measure $\pi$ which satisfies a Poincar\'e inequality~\eqref{eq:poincare}?
\end{center}

\subsection{Results}
\paragraph{Covariance estimation via MCMC.}
We demonstrate that MCMC methods for distributions satisfying~\eqref{eq:poincare} reduce the query complexity by a significant factor, compared to an estimator constructed using i.i.d. samples. We summarize our main result in the theorem below.  A similar guarantee also holds for the mean vector $\overline X \deq N^{-1} \sum_{i=1}^N X_i \approx \E_\pi X$.
\begin{thm}[{Informal; see Theorem~\ref{thm:add-cov-Poincare}}]
Assume that $\pi$ satisfies a Poincar\'e inequality and $P$ has a spectral gap.
For $\varepsilon>0$ and $\delta\in (0, d)$, the estimator $\overline \Sigma \deq \frac{1}{N} \sum_{i=1}^N (X_i - \overline X)^{\otimes 2}$ satisfies with high probability that
$
\Abs{\overline{\Sigma}-\textup{Cov}(\pi)}\preceq\veps\Sigma+\delta I_{d}\,,
$
so long as $N= \Otilde(\frac{\tr\Sigma+\cpi(\pi)}{\veps\delta})$.
\end{thm}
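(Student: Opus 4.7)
The plan is to reduce the L\"owner inequality $|\overline\Sigma - \Sigma| \preceq \varepsilon \Sigma + \delta I$ to a single operator-norm bound after preconditioning, and then invoke a matrix Bernstein inequality for Markov chains in the spirit of \cite{neeman2023concentration}. Set $B \deq (\Sigma + \tfrac{\delta}{\varepsilon} I)^{-1/2}$ and $\mu \deq \E_\pi X$; the claim is equivalent to $\|B(\overline\Sigma - \Sigma)B\|_{\op} \leq \varepsilon$. Using the identity $\overline\Sigma = \tfrac1N \sum_i (X_i - \mu)^{\otimes 2} - (\overline X - \mu)^{\otimes 2}$, the task splits into a centered sample-covariance term and a rank-one mean-correction term, which I handle separately.

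For the main term, preconditioning buys two things: $\E_\pi \|B(X-\mu)\|^2 = \tr(B\Sigma B) \leq \varepsilon \delta^{-1} \tr\Sigma$, and the pushforward $B_\#\pi$ satisfies a Poincar\'e inequality with constant at most $\|B\|_{\op}^2 \cpi(\pi) \leq \varepsilon \cpi(\pi)/\delta$. Bobkov-type exponential concentration for Poincar\'e measures then yields, with probability at least $1 - \delta_0/N$ uniformly over $i$,
\begin{equation*}
\|B(X_i - \mu)\|^2 \lesssim \frac{\varepsilon \bigl(\tr\Sigma + \cpi(\pi) \log^2(N/\delta_0)\bigr)}{\delta} \eqqcolon R.
\end{equation*}
Truncating to the event $\{\|B(X_i-\mu)\|^2 \leq R\}$ produces bounded matrix-valued summands; the induced bias $\|\E_\pi[B(X-\mu)^{\otimes 2} B\, \mathbf{1}\{\|B(X-\mu)\|^2 > R\}]\|_{\op}$ is controlled by the same Poincar\'e tail bound and is absorbed into lower-order terms.

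The core estimate is then a matrix Bernstein inequality for Markov chains: for a $\pi$-reversible kernel with spectral gap $\lambda$ and a matrix-valued $f$ with $\|f(X)\|_{\op} \leq R$ and $\|\E_\pi f^2\|_{\op} \leq \sigma^2$, one has
\begin{equation*}
\P\Bigl( \Bigl\| \tfrac1N \textstyle\sum_i f(X_i) - \E_\pi f \Bigr\|_{\op} \geq t \Bigr) \lesssim d \exp\bigl( -c \lambda N \min\{t^2/\sigma^2,\, t/R\} \bigr).
\end{equation*}
Applied to $f(x) = B(x - \mu)^{\otimes 2} B - B\Sigma B$ (truncated), we have $\|\E_\pi f^2\|_{\op} \leq R\,\|B\Sigma B\|_{\op} \leq R$; taking $t = \varepsilon$ forces $N \gtrsim R \log d / (\varepsilon^2 \lambda)$, i.e., $N = \widetilde O\bigl((\tr\Sigma + \cpi(\pi))/(\varepsilon \delta \lambda)\bigr)$, matching the claim (the $1/\lambda$ factor and log factors are absorbed into $\widetilde O$).

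For the mean correction, $(\overline X - \mu)^{\otimes 2}$ is positive semidefinite and rank one, so it only tightens the upper bound; it affects only the lower Löwner bound. A Chebyshev-type argument for the Markov chain gives $\E \|B(\overline X - \mu)\|^2 \lesssim \tr(B\Sigma B)/(\lambda N)$, which is at most $\varepsilon$ under our scaling and is promoted to a high-probability bound by a second application of Bobkov concentration. I expect the main technical friction to be the non-stationary start: the Bernstein inequality above is stated at stationarity, so one must pay a warm-start factor (which is subsumed in $N_{\mrm{burn}}$) through a change-of-measure argument, and verify that the truncation bias, mean correction, and variance dominate the resulting constants in the right order. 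The remaining bookkeeping on $N$, $d$, $\delta_0$, and $\lambda$ should collapse cleanly into the $\widetilde O$.
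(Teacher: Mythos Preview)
Your approach is essentially the paper's: precondition by $(\Sigma + \tfrac{\delta}{\varepsilon}I)^{-1/2}$ (the paper writes this as $(\rho\Sigma + I)^{-1/2}$ and optimizes $\rho$ at the end, landing on the same choice), truncate to a ball in the preconditioned norm, apply the matrix Bernstein inequality for Markov chains \cite{neeman2023concentration} to the truncated summands, and control the truncation bias via Poincar\'e tail bounds. Two minor differences: (i) for the tail contribution $\tfrac{1}{N}\sum_i (X_i-\mu)^{\otimes 2}\ind_{B^c}$, you union-bound over $i$ so that on the good event the truncation is vacuous, whereas the paper bounds this sum directly via Markov's inequality plus Cauchy--Schwarz and a fourth-moment estimate under \eqref{eq:poincare}; both routes give polynomial failure probability and the same $N$ up to logs. (ii) For the mean correction $(\overline X - \mu)^{\otimes 2}$, the paper does not use a Chebyshev argument but simply repeats the truncate-plus-Bernstein recipe for vectors.

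One imprecision to flag: your appeal to ``a second application of Bobkov concentration'' to promote the bound $\E\|B(\overline X - \mu)\|^2 \lesssim \tr(B\Sigma B)/(\lambda N)$ to high probability is not justified as stated. Poincar\'e-type Lipschitz concentration applies to functionals of a \emph{single} draw from $\pi$; the joint law of $(X_1,\dots,X_N)$ under the Markov chain is not assumed to satisfy a Poincar\'e inequality, so you cannot invoke it on $\overline X$ directly. Pure Chebyshev from the second-moment bound would cost an extra factor of $d$ in $N$ for failure probability $1/d$, which is not always dominated by $(\tr\Sigma + \cpi)/(\varepsilon\delta)$. The clean fix is exactly what the paper does: truncate the $X_i$'s and apply the vector Bernstein inequality for Markov chains to $\overline X - \mu$, mirroring your covariance argument.
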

We also provide a multiplicative form in Corollary~\ref{cor:mul-cov-Poincare}. To the best of our knowledge, we are the first work to give a theory of covariance estimation using MCMC under the standard assumptions of a Poincar\'e inequality and a spectral gap. 

\paragraph{Applications.}
To illustrate the broad applicability of our results, we provide several domains where our bounds can be leveraged for significant improvements in query complexity.

The first occurs when sampling uniformly from a convex body $\K \subset \Rd$ (i.e., $\pi \propto \one_{\mc K}$). This problem initiated the quantitative study of covariance estimation in theoretical computer science (see \S\ref{sec:related-work}).
We demonstrate that, when $\mc K$ is well-rounded, $\E_\pi [\norm{\cdot}^2] \lesssim d$, we can estimate the covariance up to a constant factor with probability $1-\mc O(\nicefrac{1}{d})$, using roughly $d^3$ oracle queries in expectation (Lemma~\ref{lem:cov-estimation-app1}). This contrasts with the best known complexity of roughly $d^4$ in the setting using i.i.d. samples. Using this, we can therefore convert a well-rounded convex body to a near-isotropic one with the same complexity (Lemma~\ref{lem:rounding-final}). This is a key ingredient in the \emph{isotropic rounding} procedure of~\cite{jia2021reducing}, needed for the state-of-the-art complexity in uniform constrained sampling. 
Alongside this, we provide a simpler proof of the rounding algorithm therein (see \S\ref{sec:app-iso-rounding}).

Secondly, we show that for unconstrained sampling $\pi \propto \exp(-V)$, when $\pi$ satisfies~\eqref{eq:poincare} with constant $\alpha^{-1}$ and $V$ is $\beta$-smooth, there exists an algorithm with query complexity $\Otilde(\kappa d^{3/2}/\varepsilon^2)$ for $\kappa \deq \beta/\alpha$ which achieves a covariance estimate $\lVert\overline \Sigma - \Sigma\rVert \leq \varepsilon \lVert\Sigma\rVert$. If instead one were to use i.i.d. samples, then the best known complexity is $\O(\kappa d^{5/2}/\varepsilon^2)$, which is worse by a dimensional factor. See Theorem~\ref{thm:unconstrained-covar} for more details.

\subsection{Related work}\label{sec:related-work}
\paragraph{Covariance estimation.} 
Covariance estimation has long been studied in statistics. We concentrate on the following setting: \emph{given random i.i.d. samples $(X_i)_{i \leq N}$ from a centered distribution $\pi$ with $\Sigma:=\textup{Cov}(\pi)$, bound the number $N$ of samples for which the following holds with high probability}
\begin{equation}\label{eq:cov-multi}
    \Bnorm{\frac{1}{N}\sum_{i=1}^N X_i^{\otimes 2} - \Sigma}\leq \frac{1}{10} \norm{\Sigma}\,.
\end{equation}
In this framework, the problem was first addressed by~\cite{kannan1997random}, obtaining $N=\O(d^2)$ for a uniform distribution over a convex body. Their analysis occurred in the context of isotropic rounding of a convex body, which was one of the subroutines in their volume-computation algorithm. 
This was improved for general distributions by~\cite{bourgain1996random, rudelson1999random}, and by~\cite{paouris2006concentration} for a convex body. The main technique used by all of these works was a noncommutative moment inequality. Then, the seminal work of~\cite{adamczak2010quantitative} showed via a chaining argument that $N=\O(d)$ for any log-concave distribution. They also showed that this holds for a sub-exponential distribution if additionally $\norm{X}\lesssim d^{1/2}$ (see Remark 4.11 therein). 
This bound \emph{without logarithmic overhead} was further extended by~\cite{srivastava2013covariance} under a bounded moment condition. 

Another line of work follows from the matrix Laplace transform method, pioneered by~\cite{ahlswede2002strong} and further developed in~\cite{oliveira2009concentration, vershynin2012introduction, tropp2012user}. The eventual bound obtained is also $N = \mc O(d \log d)$, where logarithmic oversampling is in general unavoidable. For a more comprehensive survey of the literature, see the following monographs~\cite{vershynin2012introduction, vershynin2018high, rigollet2023high}.

\paragraph{Concentration inequalities under dependence.}
Concentration inequalities such as the matrix Bernstein inequality serves as a general arsenal for establishing the sample complexity of this type of problem in the setting of \emph{i.i.d. data}. We refer readers to \cite{tropp2015introduction} for a comprehensive overview of this topic.
By contrast, the setting where $\{X_1, \ldots, X_n\}$ are correlated has been comparatively understudied. Beginning with~\cite{gillman1993hidden}, several works~\cite{lezaud1998chernoff, paulin2015concentration, jiang2018bernstein, fan2021hoeffding} have proven concentration inequalities for (scalar) functions of iterates arising from a Markov process.

Matrix concentration results under Markovian assumptions have only recently been studied~\cite{garg2018matrix, qiu2020matrix}. Most relevant to our work is that of~\cite{neeman2023concentration}, which establishes a Bernstein inequality in the Frobenius norm, assuming that each sample is bounded and that the Markov chain has a spectral gap. We also note that other literature has shown concentration for correlated iterates under conditions other than a spectral gap~\cite{mackey2014matrix, paulin2016efron}. In particular, Bernstein inequalities have been established under other assumptions such as the $\beta$-mixing or $\tau$-mixing of the chain. Under these properties, the representative works~\cite{merlevede2009bernstein, merlevede2011bernstein, banna2016bernstein} show a Bernstein-like matrix concentration inequality, analogous to~\cite{neeman2023concentration}. However, in our applications it is difficult to verify these conditions, unlike the spectral gap condition considered in our work.

Although matrix Bernstein inequalities are indeed very strong, they need to be combined with a tail estimate in order to be usable in our setting. \cite{adamczak2010quantitative} shows that this is possible under a log-concavity assumption of the target distribution. However, their argument cannot be applied out-of-the-box to Markovian iterates, and requires some additional adjustments to remain valid.

\paragraph{Sampling under functional inequalities.}
The relationship between Markov chain methods and spectral gaps dates back to the early theory of MCMC~\cite{mihail1989conductance, fill1991eigenvalue}, and has been thoroughly characterized in discrete-space settings.

In continuous state-space, however, the problem is complicated by the need to ensure implementability of the resulting algorithm. 
Early results characterized the convergence under log-concavity or log-Sobolev conditions~\cite{dalalyan2012sparse, dalalyan2017further, durmus2019analysis, vempala2019rapid}. While the continuous-time convergence of many idealized processes can be derived easily from Poincar\'e-type inequalities~\cite{bakry2014analysis}, only recently were results proven under~\eqref{eq:poincare} for implementable samplers such as the Langevin Monte Carlo algorithm or the proximal sampler~\cite{lehec2023langevin, chewi2021analysis, chen2022improved, altschuler2024faster}. These works demonstrate that when $\pi$ satisfies~\eqref{eq:poincare}, the law of the iterates generated by the sampler achieves $\varepsilon$-accuracy in $\chi^2(\cdot \mmid\pi)$ with mild complexity, and that the proximal sampler actually contracts in said divergence for any input measure. This is equivalent to a spectral gap for the proximal sampler. As a result, we focus our attention on the proximal sampler, since merely guaranteeing $\varepsilon$-accuracy in $\chi^2$-divergence is not sufficient to establish the concentration of an estimand constructed from Markovian iterates.

For constrained targets such as uniform distributions over convex bodies, $\bw$~\cite{lovasz1993random, kannan1997random} and $\har$~\cite{smith1984efficient, lovasz1999hit} were shown to have mixing times depending on the Cheeger isoperimetric constant of the target. Only recently, \cite{kook2024and, kook2024renyi} proposed a sampler with R\'enyi-infinity guarantees which directly quantify the dependence on the Poincar\'e constant; this in turn depends on the degree of \emph{isotropy} of the target, i.e., how close its covariance matrix is to the identity.

A related line of research~\cite{lovasz2006simulated, cousins2018gaussian, jia2021reducing} discovered a procedure which could place any convex body in isotropic position. They iteratively estimate the covariance matrix of subsets of the convex body, and then sample from it with only $\Otilde(d^{3.5})$ complexity. To achieve their complexity, they rely on a Markovian covariance estimator subroutine, which we also study in the present work.

\subsection{Organization}
The paper is organized as follows. In \S\ref{sec:prelim}, we provide some preliminaries needed for our main result. In \S\ref{sec:covar_est}, we state our main bound on the concentration of covariance matrices generated by a Markov chain, and present a brief proof sketch. \S\ref{sec:applications} then gives our two primary applications for covariance estimation, and includes some additional details for the isotropic rounding algorithm. \S\ref{app:proofs-main} and \S\ref{app:proofs-app} contain the proofs for the results of the previous sections. We conclude by summarizing future directions along this line of research.

\section{Preliminaries}\label{sec:prelim}

\subsection{Notation}

We use $\norm{\cdot}$ for the $\ell_2$-norm on $\Rd$, and $\norm{\cdot}$ for the operator norm on $\R^{d \times d}$, while $\norm{\cdot}_{\mrm{F}}$ indicates the Frobenius norm. For a vector $u\in \Rd$, we denote its outer product by $u^{\otimes 2}:= uu^\T$. Next,  $a=\mc O(b)$ or $a\lesssim b$ signifies that $a\leq cb$ for an absolute constant $c>0$. Similarly, $a\gtrsim b,a=\Omega(b)$ signifies $a\geq cb$, while $a=\Theta(b)$ signifies $a\lesssim b,b\lesssim a$ simultaneously. We also use $a=\Otilde(b)$ to denote $a=\mc O(b)$ up to a polylogarithmic factor. 
When comparing matrices, we write $|A|\preceq B$ for two matrices $A,B \in \R^{d\times d}$ to denote $-B\preceq A\preceq B$. We conflate a measure and its density throughout the work where there is no confusion. Lastly, $\mc P(\Rd)$ indicates the set of probability measures over $\Rd$. 

Since we are primarily concerned with the high-dimensional complexity of the algorithm, many of our results involve algebraic simplifications that are valid only when the dimension $d \geq 200$ is sufficiently large. In quantifying failure probabilities for our algorithm, we introduce a constant $\fp$ which may change from line to line, but is always some universal constant that does not depend on $d, \veps$ or any other parameter of interest.

\subsection{Background}\label{ssec:background}

We now formalize key notions needed for our main results.
\begin{defn}\label{def:poincare}
We say that a probability measure $\pi$ on $\R^{d}$ satisfies a \emph{Poincar\'e inequality} (PI) with parameter $\cpi(\pi) \geq 0$ if for all smooth functions $f:\R^{d}\to\R$, 
\begin{equation}
\var_{\pi}f\leq C_{\PI}(\pi)\,\E_{\pi}[\norm{\nabla f}^{2}]\,,\tag{PI}\label{eq:poincare}
\end{equation}
 where $\var_{\pi}f\deq\E_{\pi}[\Abs{f-\E_{\pi}f}^{2}]$. 
\end{defn}
In particular, \eqref{eq:poincare} implies that any $1$-Lipschitz function $f$ concentrates around its mean $\E_{\pi}f$ in a sub-exponential manner:
\begin{equation}\label{eq:poincare-Lip-con}
\P_\pi(f-\E_{\pi}f\geq t)\leq3\exp\bpar{-\frac{t}{\sqrt{C_{\PI}(\pi)}}}\qquad\text{for all }t\geq0\,.    
\end{equation}

\begin{defn}\label{def:spectral}
    Let $P: \R^d \times \mc B(\R^d) \to \R$ be a Markov kernel, i.e., $P(\cdot\,|\,x) \in \mc P(\R^d)$ for every $x \in \R^d$. 
    For the stationary measure $\pi \in \mc P(\R^d)$ of the Markov kernel, $P$ is said to have a \emph{spectral gap} $\lambda \in (0, 1]$ if for every $f: \R^d \to \R$ with $f \in L^2(\pi)$ and $\E_\pi f = 0$, we have
    \[
        \norm{P f}_{L^2(\pi)}^2 \deq \E_{X\sim\pi} \bbrack{\bigl(\E_{Y\sim P(\cdot|X)}[f(Y)]\bigr)^2}\leq (1-\lambda)^2 \norm{f}_{L^2(\pi)}^2\,.
    \]
    A kernel $P$ is \emph{reversible} if $\inner{Pf, g}_{L^2(\pi)} = \inner{f, Pg}_{L^2(\pi)}$ for all $f, g \in L^2(\pi)$.
\end{defn}
We say a sequence $(X_0, X_1, \ldots)$ is \emph{driven by} $P$ with initial distribution $\pi_0$ if it is generated by the procedure in~\eqref{eq:mcmc}. The following metrics\footnote{Although not all of these are proper metrics, we still refer to them as such in sense of ``performance metric''.} between probability measures will be useful throughout our work, particularly for the applications.
\begin{defn}[Probability divergences] \label{def:p-dist} 
For $\mu,\nu \in \mc P(\Rd)$, the \emph{$\chi^{q}$-divergence} is defined by
\[
\chi^q(\mu\mmid\nu):=\int \bpar{\frac{\D\mu}{\D\nu}}^q\,\D\nu-1\quad\text{ if }\mu\ll\nu, \text{ and }\infty \text { otherwise.}
\]
In particular, $\chi^2(\mu \mmid \nu) = \textint (\tfrac{\D \mu}{\D \nu} - 1)^2 \, \D \nu$.
The \emph{$q$-R\'enyi divergence} is given by
\[
\eu R_{q}(\mu\mmid\nu):=\tfrac{1}{q-1}\log\bpar{\chi^{q}(\mu\mmid\nu)+1}\,.
\]
Additionally, the \emph{total variation} distance between the two measures is given by
\[
\norm{\mu-\nu}_{\msf{TV}} :=\sup_{B\in\mc F}\Abs{\mu(B)-\nu(B)}\,,
\]
where $\mc F$ is the set of all measurable subsets of $\R^{d}$.
\end{defn}

\section{Covariance estimation using dependent samples}\label{sec:covar_est}
We state our primary assumption below.
\begin{assumption}
\label{as:markov} $P$ is a reversible Markov kernel on $\Rd$ with stationary distribution $\pi$ and spectral gap $\lda$ (Definition~\ref{def:spectral}).
Let $(X_{1},\dotsc,X_{N})$ be a sequence driven by $P$ with initial distribution $\pi_0 = \pi$.
\end{assumption}
Note that the choice of $\pi_0 = \pi$ can be removed at the cost of a multiplicative factor $1+\chi^2(\pi_0 \mmid \pi)$ in our failure probabilities (see Lemma~\ref{lem:change-measure}).

The following result will be instrumental in establishing our covariance bounds.
\begin{thm}[{\cite{neeman2023concentration}, Theorem 2.2}] \label{thm:dependent-bernstein}
Under Assumption~\ref{as:markov}, suppose that $F_{i}:\R^d \to\R^{d\times d}$,\footnote{This is valid even for arbitrary input state spaces, but this will not concern us in this work.}
for $i\in[N]$ is a sequence of functions each mapping to real symmetric
$d\times d$ matrix satisfying $\E_{\pi}[F_{i}(\cdot)]=0$, $\norm{\E_\pi[F_{i}^2(\cdot)]}\leq\mc V_{i}$,
and $\sup_{\R^d}\norm{F_{i}(\cdot)}\leq\mc M$. For
$\sigma^{2}:=\sum_{i=1}^{N}\mc V_{i}$, it holds that 
\[
\P\Bpar{\Bnorm{\sum_{i=1}^{N}F_{i}(X_{i})}_{\textup{F}}\geq t}\leq d^{2-\pi/4}\exp\Bpar{-\frac{t^{2}/(32/\pi^{2})}{\alpha(\lda)\,\sigma^{2}+\beta(\lda)\,\mc Mt}}\,,
\]
where $\alpha(\lda)=\frac{2-\lda}{\lambda}$ and $\beta(\lda)=\frac{8/\pi}{\lambda}$.
\end{thm}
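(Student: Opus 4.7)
My plan is to establish the tail via the Schatten-$2p$ moment method adapted to the Markov setting, in which the spectral gap of $P$ supplies the necessary correlation-decay budget. Setting $S\deq\sum_{i=1}^{N}F_{i}(X_{i})$ (which is symmetric), H\"older applied to the spectrum of $S$ yields $\|S\|_{\mathrm{F}}^{2p}\leq d^{p-1}\tr(S^{2p})$ for every positive integer $p$, so that Markov's inequality gives
\begin{align*}
\P\bpar{\|S\|_{\mathrm{F}}\geq t}\leq \frac{d^{p-1}}{t^{2p}}\,\E\tr(S^{2p})\,,
\end{align*}
reducing the problem to a moment bound on $\E\tr(S^{2p})$ and a final optimization over $p$.

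The core task is to estimate $\E\tr(S^{2p})$. I would expand it as $\sum_{(i_1,\dotsc,i_{2p})\in[N]^{2p}} \E\tr\bpar{\prod_{j=1}^{2p} F_{i_j}(X_{i_j})}$ and, after reducing to sorted tuples $i_1\leq\dotsb\leq i_{2p}$ at the cost of a $(2p)!$ factor, telescope through the chain using the following matrix-valued correlation lemma: for any mean-zero $g\colon\R^d\to\R^{d\times d}$ and any $h\colon\R^d\to\R^{d\times d}$, and any $k\geq 0$,
\begin{align*}
\bigl|\E\tr\bpar{g(X_0)\,h(X_k)}\bigr|\leq (1-\lambda)^{k}\,\|g\|_{L^2(\pi;\mathrm{F})}\,\|h\|_{L^2(\pi;\mathrm{F})}\,,
\end{align*}
which follows by applying Definition~\ref{def:spectral} entrywise and then invoking Cauchy--Schwarz in the Hilbert--Schmidt inner product. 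Iterating this along the sorted chain supplies a product of geometric weights $(1-\lambda)^{i_{j+1}-i_j}$, while each intermediate matrix factor is absorbed either into the variance proxy $\mc V_{i_j}$ (when it can be centered after conditioning on the chain's past) or into the uniform bound $\mc M$ (otherwise). Summing the geometric series over the gaps contributes $\mc O(1/\lambda)$ per gap, and collecting everything produces a Bernstein-shaped moment bound of the form
\begin{align*}
\E\tr(S^{2p})\lesssim d\,\bpar{Cp\,\sigma^{2}/\lambda}^{p}+d\,\bpar{Cp\,\mc M/\lambda}^{2p}\,.
\end{align*}
Plugging back into Markov and choosing $p\approx \min\{t^{2}\lambda/\sigma^{2},\,t\lambda/\mc M\}$ recovers the claimed exponential tail, while the prefactor $d^{2-\pi/4}$ and the constant $32/\pi^{2}$ emerge after sharpening the crude H\"older loss $d^{p-1}$ through a complex interpolation argument on the Schatten scale.

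The main obstacle will be the noncommutative correlation step. In the scalar or i.i.d.\ matrix setting a Wick-type calculation reduces the moment to pair matchings, but here the factors $F_{i_j}(X_{i_j})$ inside the trace do not commute, so the expectation cannot be factored through the chain as freely. The remedy is to peel off the extreme factors using cyclicity of $\tr$ together with the bound $\|AB\|_{\mathrm{F}}\leq \|A\|_{\op}\,\|B\|_{\mathrm{F}}$ and then induct on $p$; keeping the induction's constants sharp enough to produce the advertised prefactors is the delicate point where the interpolation argument really has to do its work.
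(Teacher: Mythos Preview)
The paper does not prove this statement: Theorem~\ref{thm:dependent-bernstein} is quoted verbatim from \cite{neeman2023concentration} and used as a black box throughout, so there is no ``paper's own proof'' to compare your proposal against.

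Regarding your sketch on its own merits, the overall Schatten moment-plus-Markov strategy is a standard route to Bernstein-type bounds, but there is a genuine gap in the step where you ``reduce to sorted tuples $i_1\le\dotsb\le i_{2p}$ at the cost of a $(2p)!$ factor.'' For $2p>2$ the trace $\tr\bpar{F_{i_1}(X_{i_1})\cdots F_{i_{2p}}(X_{i_{2p}})}$ is not invariant under arbitrary permutations of the factors (cyclicity only gives you the cyclic group, not the symmetric group), so different orderings of the same multiset of indices are genuinely different quantities and cannot be collapsed to a single sorted representative. Your correlation lemma applies cleanly to \emph{consecutive} chain variables, but in a generic term of the expansion the time indices appear out of order inside the product, and you cannot condition ``along the sorted chain'' because the algebraic structure of the trace does not respect that sorting. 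You acknowledge this obstacle at the end, but the proposed fix---peel via cyclicity and $\|AB\|_{\mathrm F}\le\|A\|_{\op}\|B\|_{\mathrm F}$, then induct---is too vague to see how it avoids a catastrophic loss of a $(2p)!$ or $\mc M^{2p}$ factor that would kill the Bernstein shape.

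Separately, the specific constants $d^{2-\pi/4}$, $32/\pi^{2}$, $\alpha(\lambda)=\tfrac{2-\lambda}{\lambda}$, $\beta(\lambda)=\tfrac{8/\pi}{\lambda}$ are artifacts of the particular recursion used in \cite{neeman2023concentration}; the hand-wave to ``complex interpolation on the Schatten scale'' does not explain how a H\"older-type loss of $d^{p-1}$ would sharpen to a fractional power of $d$ with exponent $2-\pi/4$. A moment-method proof along your lines would at best recover the bound with different (likely worse) absolute constants.
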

When $\lambda$ is some absolute constant bounded away from $0$, the quantity in the exponential is $\mc{O}(\min(t^2/\sigma^2, t/\mc M))$, i.e., quadratic for small $t$ and linear for large $t$.

As a corollary, we can deduce similar guarantees for vectors/non-square matrices (see \S\ref{app:mtx-Bern}).
\begin{cor}\label{cor:vec-bern-spectral} 
Under Assumption~\ref{as:markov}, suppose that $\E_\pi X_i = 0$, $\E_\pi[\norm{X_{i}}^{2}]\leq\mc V_{i}$, and $\norm{X_{i}}\leq\mc M$ almost surely for $i \in [N]$. Then, denoting $\sigma^{2}=\sum_{i=1}^{N}\mc V_{i}$ and where $\alpha, \beta$ are as in Theorem~\ref{thm:dependent-bernstein}, the following holds for all $t\geq0$, 
\[
\P\Bpar{\Bnorm{\sum_{i=1}^{N}X_{i}}\geq t}\leq(d+1)^{2-\pi/4}\exp\bpar{-\frac{t^{2}/(32/\pi^{2})}{\alpha(\lda)\,\sigma^{2}+\beta(\lda)\,\mc Mt}}\,.
\]
\end{cor}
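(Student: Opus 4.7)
The plan is to reduce the vector case to the matrix case via the Hermitian dilation trick, which is the standard tool for converting vector (or rectangular matrix) concentration inequalities into symmetric matrix ones. Concretely, for each $X_i \in \R^d$ I define the symmetric $(d+1)\times(d+1)$ matrix
\[
F_i(x) \deq \begin{pmatrix} 0 & x \\ x^{\T} & 0 \end{pmatrix}\,,
\]
so that $F_i$ is a function from $\R^d$ to the real symmetric $(d+1)\times(d+1)$ matrices. Direct computation gives
\[
F_i(x)^2 = \begin{pmatrix} xx^{\T} & 0 \\ 0 & \norm{x}^2 \end{pmatrix}\,, \qquad \norm{F_i(x)}_{\op} = \norm{x}\,.
\]

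From here, the three hypotheses of Theorem~\ref{thm:dependent-bernstein} transfer immediately. Linearity of the dilation gives $\E_\pi[F_i(X_i)] = 0$. For the variance proxy, since $\E_\pi[F_i(X_i)^2]$ is block-diagonal with blocks $\E_\pi[X_iX_i^{\T}]$ and $\E_\pi[\norm{X_i}^2]$, its operator norm is $\max(\norm{\E_\pi[X_iX_i^{\T}]}, \E_\pi[\norm{X_i}^2]) = \E_\pi[\norm{X_i}^2] \leq \mathcal V_i$. The uniform bound also transfers: $\sup \norm{F_i}_{\op} = \sup \norm{X_i} \leq \mathcal M$ almost surely.

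The final ingredient is to connect the Frobenius-norm deviation of $\sum F_i(X_i)$ to the Euclidean-norm deviation of $\sum X_i$. Since the Frobenius norm dominates the operator norm on symmetric matrices, and since the operator norm of the dilation equals the Euclidean norm of the underlying vector, I obtain
\[
\Bnorm{\sum_{i=1}^N X_i} = \Bnorm{\sum_{i=1}^N F_i(X_i)}_{\op} \leq \Bnorm{\sum_{i=1}^N F_i(X_i)}_{\mrm{F}}\,.
\]
Applying Theorem~\ref{thm:dependent-bernstein} to the sequence $(F_i)$ in dimension $d+1$ with the same parameters $\mathcal V_i, \mathcal M$ then yields the claimed bound with the $d^{2-\pi/4}$ replaced by $(d+1)^{2-\pi/4}$.

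No serious obstacle arises: the argument is essentially a two-line reduction once the dilation is set up. The only point requiring minor care is verifying that the variance proxy $\mathcal V_i$ controls the operator norm of the block-diagonal matrix $\E_\pi[F_i^2]$, which relies on the trivial inequality $\norm{\E_\pi[X_iX_i^{\T}]}_{\op} \leq \tr \E_\pi[X_iX_i^{\T}] = \E_\pi[\norm{X_i}^2]$.
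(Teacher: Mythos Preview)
Your proposal is correct and follows essentially the same approach as the paper: both apply the Hermitian dilation $F(x)=\begin{pmatrix}0 & x\\ x^{\T} & 0\end{pmatrix}$ to reduce to Theorem~\ref{thm:dependent-bernstein}, verify the mean, variance-proxy, and uniform-bound hypotheses exactly as you do, and then use that $\bnorm{\sum X_i}=\bnorm{\sum F(X_i)}_{\op}\le\bnorm{\sum F(X_i)}_{\mrm F}$ to conclude. The only cosmetic difference is the placement of the blocks in the dilation, which is irrelevant.
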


\subsection{Main results}

Suppose our Markov chain is nice enough that the spectral gap is $\lambda \geq 0.99$, which can always be done by composing a sufficient number of iterations of a more tractable chain with smaller spectral gap. Then our estimators for the mean and covariance are respectively
\[
\overline{X}:=\frac{1}{N}\sum_{i=1}^{N}X_{i}\,,\qquad\overline{\Sigma}:=\frac{1}{N}\sum_{i=1}^{N}(X_{i}-\overline{X})^{\otimes2}=\frac{1}{N}\sum_{i=1}^{N}X_{i}^{\otimes2}-\overline{X}^{\otimes2}\,.
\]
We state our covariance estimation results in both additive and multiplicative forms. 

\begin{thm}[Additive form] \label{thm:add-cov-Poincare} 
Under Assumption~\ref{as:markov}, if $\pi$ also satisfies~\eqref{eq:poincare} and $P$ has a spectral gap $\lda\geq0.99$, the covariance estimator $\overline{\Sigma}=\frac{1}{N}\sum_{i=1}^{N}(X_{i}-\overline{X})^{\otimes2}$ satisfies that for any $\veps>0$ and $\delta\in(0,d)$, with probability at least $1-\nicefrac{\fp}{d}$,
\[
\Abs{\overline{\Sigma}-\Sigma} \preceq\veps\Sigma+\delta I_{d}\,,
\]
so long as $N\asymp\frac{\tr\Sigma+\cpi(\pi)}{\veps\delta}\log^{2}\frac{d\max(d,\cpi(\pi)+\tr\Sigma)}{\delta}\log^{2}d$.
\end{thm}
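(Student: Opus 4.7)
The plan is a truncation-plus-Bernstein strategy, adapted to the anisotropic target bound. Since covariance is translation-invariant, I first assume $\E_\pi X = 0$, so that
\[
\overline{\Sigma} - \Sigma \;=\; \Bigl(\frac{1}{N}\sum_{i=1}^{N} X_iX_i^{\T} - \Sigma\Bigr) \;-\; \overline{X}^{\otimes 2}.
\]
The second (PSD) term is treated at the end: Corollary~\ref{cor:vec-bern-spectral} applied after the Poincar\'e truncation $\|X_i\| \lesssim \sqrt{\tr\Sigma} + \sqrt{\cpi(\pi)}\,\polylog$ (from~\eqref{eq:poincare-Lip-con} since $\|\cdot\|$ is $1$-Lipschitz) yields $\|\overline{X}\|^2 \leq \delta/2$ under the advertised $N$, so $\overline{X}^{\otimes 2} \preceq (\delta/2)\,I_d$. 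The heart of the proof is the two-sided PSD bound $|\textup{(I)}| \preceq \varepsilon\Sigma + (\delta/2)\,I_d$ for the first term (I).

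To linearize the anisotropy I reparametrize: set $A \deq \Sigma + (\delta/\varepsilon)\,I_d$ and $Y_i \deq A^{-1/2} X_i$. Then $(Y_i)$ is a reversible $\pi_Y$-stationary Markov chain with the same spectral gap $\lambda$, and $\pi_Y \deq A^{-1/2}_\ast \pi$ satisfies a Poincar\'e inequality with constant $\cpi(\pi_Y) \leq (\varepsilon/\delta)\cpi(\pi)$. Crucially, $|\textup{(I)}| \preceq \varepsilon A$ is equivalent to the \emph{operator-norm} inequality
\[
\Bigl\|\frac{1}{N}\sum_{i=1}^N Y_iY_i^\T - \Sigma_Y\Bigr\|_{\op} \leq \varepsilon, \qquad \Sigma_Y \deq A^{-1/2}\Sigma A^{-1/2},
\]
where $\|\Sigma_Y\|_{\op} \leq 1$. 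Applying~\eqref{eq:poincare-Lip-con} to the $\sqrt{\varepsilon/\delta}$-Lipschitz map $x\mapsto\|A^{-1/2}x\|$, together with the first-moment estimate $\E_\pi\|A^{-1/2}X\|^2 = \tr(A^{-1}\Sigma) \leq (\varepsilon/\delta)\tr\Sigma$, and a union bound over $i\in[N]$, delivers a deterministic cap
\[
R_Y^2 \;\asymp\; \frac{\varepsilon}{\delta}\Bigl(\tr\Sigma + \cpi(\pi)\log^2\tfrac{Nd}{\fp}\Bigr)
\]
valid simultaneously for all $\|Y_i\|^2$ with probability $\geq 1-\fp/d$.

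On this truncation event, I apply Theorem~\ref{thm:dependent-bernstein} to the deterministic truncated functionals $G_i(y) = yy^\T\ind_{\|y\|\leq R_Y} - \E_{\pi_Y}\bbrack{YY^\T\ind_{\|Y\|\leq R_Y}}$, which are bounded and mean-zero under $\pi_Y$ by construction. The estimate $\mathcal{M}\lesssim R_Y^2$ is immediate, and the variance bound $\mathcal{V}_i \lesssim R_Y^2\|\Sigma_Y\|_{\op}\lesssim R_Y^2$ follows from $\E[\|Y\|^2YY^\T\ind_{\|Y\|\leq R_Y}] \preceq R_Y^2\,\Sigma_Y$ (scalar-monotonicity of the PSD order). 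Using $\|\cdot\|_{\op}\leq\|\cdot\|_{\mathrm{F}}$ and requiring deviation $N\varepsilon$, the Bernstein exponent forces
\[
N \;\gtrsim\; \frac{R_Y^2\,\log d}{\varepsilon^2} \;\asymp\; \frac{\tr\Sigma+\cpi(\pi)}{\varepsilon\delta}\,\polylog,
\]
which matches the stated rate once logarithms are consolidated. The replacement of $\E_{\pi_Y}[YY^\T\ind_{\|Y\|\leq R_Y}]$ by $\Sigma_Y$ contributes only an exponentially small deterministic shift controlled by the same Poincar\'e tail, absorbed into the final bound.

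The principal technical obstacle is coordinating truncation with the Markov-chain Bernstein inequality: naively conditioning on the truncation event breaks the chain law and invalidates Theorem~\ref{thm:dependent-bernstein}. My fix is to work with the \emph{deterministic} truncated functionals $G_i$ above, which are bounded and $\pi_Y$-centered by construction, so Theorem~\ref{thm:dependent-bernstein} applies without modification; on the high-probability event $\{\forall i:\|Y_i\|\leq R_Y\}$ these agree with the unconstrained summands up to the aforementioned negligible shift. The remaining bookkeeping consists of: assuming $\varepsilon\leq 1$ without loss of generality (otherwise one side of $|\overline{\Sigma}-\Sigma|\preceq\varepsilon\Sigma+\delta I$ is trivial after operator-norm control), verifying that the same sample budget suffices for the mean-term bound via Corollary~\ref{cor:vec-bern-spectral}, and absorbing the intermediate logarithmic factors into the final $\log^2$ prefactor in the theorem statement.
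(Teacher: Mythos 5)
Your proposal is correct and matches the paper's proof in its essential structure: both split $\overline{\Sigma}-\Sigma$ into a centred-covariance error plus the rank-one mean-error term, both conjugate by a shifted-covariance matrix to convert the spectral claim $|\cdot|\preceq\veps\Sigma+\delta I$ into a scalar operator-norm target, and both combine the Markov-chain Bernstein inequality (Theorem~\ref{thm:dependent-bernstein}) on a truncated summand with Poincar\'e-based tail control. Two points where your route diverges. First, the tail: the paper decomposes exactly into $\msf A_1 + \msf A_2 - \msf A_3$ and controls $\msf A_2 = \frac{1}{N}\sum X_i^{\otimes 2}\ind_{B^c}$ by Markov's inequality applied to its expectation (Lemma~\ref{lem:covar_termBC}), whereas you condition on the high-probability event $\{\forall i: \|Y_i\|\leq R_Y\}$ obtained from a union bound over iterates. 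Both are valid; your approach is perhaps more transparent but introduces a $\log N$ in $R_Y$ that must be bootstrapped since $N$ is itself determined by $R_Y$, and your cap has to be chosen slightly larger (pointwise over all $N$ iterates rather than in expectation). Second, the reparametrization: you fix $A=\Sigma+(\delta/\veps)I$ upfront and compute all moment and Poincar\'e constants in the $Y=A^{-1/2}X$ coordinates, whereas the paper keeps $\rho$ free through Lemma~\ref{lem:covar_termA}, applies Young's inequality to $\mc M + \sigma$, and solves for $\rho$ and the Young parameter $c$ jointly at the end. These are the same calculation organized differently; the paper's version avoids committing to $\rho$ until the $\mc M$-vs-$\sigma$ tradeoff is visible, while yours makes the $|\cdot|\preceq\veps A$ $\iff$ $\|\cdot\|_{\op}\leq\veps$ equivalence the organizing principle from the start, which is a clean observation. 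Both routes land on the same $N\asymp\frac{\tr\Sigma+\cpi(\pi)}{\veps\delta}\polylog$ rate. The one step you state without proof --- that the shift $\E_{\pi_Y}[YY^\T\ind_{\|Y\|>R_Y}]$ is absorbed --- is exactly what the paper's $\msf A_3$ bound verifies via Cauchy--Schwarz and Lemmas~\ref{lem:4th-Poincare} and~\ref{lem:tail-Poincare}; you should include that computation to close the argument, but it is routine.
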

One can establish multiplicative and spectral forms as well.
\begin{thm}[Multiplicative form]\label{thm:mul-cov-Poincare}
    In the setting of Theorem~\ref{thm:add-cov-Poincare}, $\overline{\Sigma}$ satisfies that for any $\veps\in(0,1)$, with probability at least $1-\nicefrac{\fp}{d}$,
\[
\norm{\overline{\Sigma}-\Sigma}\leq \veps \norm{\Sigma}\,
\]
so long as $N\asymp\frac{d}{\veps^2}\,\frac{\tr\Sigma+\cpi(\pi)}{\tr\Sigma}\log^{2}\frac{d\,(\cpi(\pi)+\tr\Sigma)}{\veps \norm{\Sigma}}\log^{2}d$.
\end{thm}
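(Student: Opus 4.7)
The multiplicative form is a direct corollary of the additive form (Theorem~\ref{thm:add-cov-Poincare}). The starting observation is that for real symmetric matrices $A,B$ with $B\succeq 0$, the relation $\Abs{A}\preceq B$ implies $\norm{A}\leq \norm{B}$, so any additive bound of the shape $\Abs{\overline\Sigma-\Sigma}\preceq \veps'\Sigma+\delta I_d$ yields
\[
\norm{\overline\Sigma-\Sigma}\leq \veps'\norm{\Sigma}+\delta\,.
\]

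The first step is to choose parameters in the additive form that make the right-hand side equal to $\veps\norm{\Sigma}$. I would take $\veps':=\veps/2$ in place of the $\veps$ appearing in Theorem~\ref{thm:add-cov-Poincare}, and set $\delta:=\veps\norm{\Sigma}/2$. (In the degenerate regime where $\veps\norm{\Sigma}/2\geq d$ so that $\delta\notin(0,d)$, one may homogenize by rescaling coordinates before invoking the additive bound, since the target estimate is scale-invariant under $\Sigma\mapsto c\Sigma$ for $c>0$.) With these choices, Theorem~\ref{thm:add-cov-Poincare} delivers $\norm{\overline\Sigma-\Sigma}\leq \veps\norm{\Sigma}$ with probability at least $1-\fp/d$, provided
\[
N\asymp \frac{\tr\Sigma+\cpi(\pi)}{\veps^{2}\,\norm{\Sigma}}\,\log^{2}\frac{d\max(d,\cpi(\pi)+\tr\Sigma)}{\veps\norm{\Sigma}}\,\log^{2}d\,.
\]

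The second step is to massage this expression into the form stated in Theorem~\ref{thm:mul-cov-Poincare}. For the leading factor, I would use $\tr\Sigma\leq d\,\norm{\Sigma}$, which gives $\norm{\Sigma}^{-1}\leq d/\tr\Sigma$ and hence
\[
\frac{\tr\Sigma+\cpi(\pi)}{\veps^{2}\,\norm{\Sigma}}\leq \frac{d}{\veps^{2}}\cdot\frac{\tr\Sigma+\cpi(\pi)}{\tr\Sigma}\,.
\]
For the logarithm, I would appeal to the consequence $\norm{\Sigma}\leq \cpi(\pi)$ of the Poincar\'e inequality, obtained by applying~\eqref{eq:poincare} to linear functionals $x\mapsto \langle v,x\rangle$ with $\norm{v}=1$; combined with mild non-degeneracy ensuring $\cpi(\pi)+\tr\Sigma\gtrsim d$, this allows the $\max(d,\,\cdot)$ inside the $\log$ to be absorbed into $\cpi(\pi)+\tr\Sigma$ up to constants, recovering the claimed polylogarithmic factor.

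The argument is essentially bookkeeping once one recognizes the multiplicative form as a weaker consequence of the additive one via $\Sigma\preceq \norm{\Sigma}I_d$. The only mild obstacle is verifying that the parameter substitution respects the range $\delta\in(0,d)$ required by Theorem~\ref{thm:add-cov-Poincare}; this is handled in the degenerate regime by rescaling, and in the typical regime where $\pi$ is approximately isotropic it holds automatically.
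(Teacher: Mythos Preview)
Your black-box reduction to the additive form is a valid strategy and differs from the paper's proof. The paper does not invoke Theorem~\ref{thm:add-cov-Poincare} as stated; instead it reruns the truncation-and-Bernstein machinery with the simpler region $B_t=\{x:\norm{x}\leq t\sqrt{\tr\Sigma}\}$ (equivalently, setting $\rho=0$ in the preconditioned argument of Lemma~\ref{lem:covar_termA}), obtaining $\norm{\msf A_1}\lesssim \veps\norm{\Sigma}+t^{2}\tr\Sigma\,\log^{2}d/(\veps N)$ directly in operator norm, then combines this with the multiplicative mean bound (Corollary~\ref{cor:mean-multi}). Your route is shorter and in fact produces a leading factor $(\tr\Sigma+\cpi)/(\veps^{2}\norm{\Sigma})$, which can be strictly smaller than the stated $d(\tr\Sigma+\cpi)/(\veps^{2}\tr\Sigma)$ when $\Sigma$ is far from isotropic; so as a sufficiency statement the theorem follows.

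There is, however, a gap in your bookkeeping of the logarithm. The appeal to ``mild non-degeneracy ensuring $\cpi(\pi)+\tr\Sigma\gtrsim d$'' is not justified by the hypotheses: nothing in the setting forbids $\cpi+\tr\Sigma\ll d$, and in that regime the $\max(d,\cdot)$ inside the logarithm inherited from Theorem~\ref{thm:add-cov-Poincare} is strictly larger than the argument $d(\cpi+\tr\Sigma)/(\veps\norm{\Sigma})$ appearing in the stated $N$. Since your leading-factor inequality $\frac{1}{\norm{\Sigma}}\leq\frac{d}{\tr\Sigma}$ can be tight (e.g.\ $\Sigma$ a multiple of $I_d$), the stated $N$ need not dominate the $N$ your reduction actually requires. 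The fix is the very rescaling you already invoke for the $\delta\in(0,d)$ constraint: both the target inequality and the claimed $N$ are invariant under $X\mapsto \sqrt{c}\,X$, so normalize to $\cpi(\pi)+\tr\Sigma=d$. Then $\max(d,\cpi+\tr\Sigma)=\cpi+\tr\Sigma$, and since $\norm{\Sigma}\leq\cpi\leq d$ and $\veps<1$ the constraint $\delta=\veps\norm{\Sigma}/2<d$ holds automatically. With that normalization your argument goes through cleanly.
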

\begin{cor}[Multiplicative form; spectral] \label{cor:mul-cov-Poincare} In the setting of Theorem~\ref{thm:add-cov-Poincare}, $\overline{\Sigma}$ satisfies for any $\veps\in(0, 1)$, with probability at least $1-\nicefrac{\fp}{d}$,
\[
\Abs{\overline{\Sigma}-\Sigma}\preceq\veps\Sigma\,,
\]
so long as $N\asymp\frac{d+\cpi(\nu)}{\veps^{2}}\log^{2}\frac{d(d+\cpi(\nu))}{\veps}\log^{2}d$, where $\nu:=(\Sigma^{-1/2})_{\#}\pi$.
\end{cor}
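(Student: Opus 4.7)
The plan is to reduce the multiplicative spectral form to the additive form Theorem~\ref{thm:add-cov-Poincare} by an isotropic change of coordinates. Concretely, let $T \deq \Sigma^{-1/2}$, set $Y_i \deq T X_i$, and let $\nu \deq T_{\#} \pi$, so that $\mathrm{Cov}(\nu) = I_d$ and $\tr \mathrm{Cov}(\nu) = d$. By construction $\nu$ satisfies \eqref{eq:poincare} with constant $\cpi(\nu)$ (this appears in the statement and is taken as a hypothesis).

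Next I would verify that the transformed chain $(Y_i)_{i\leq N}$ is driven by a reversible, $\nu$-stationary Markov kernel $\tilde P$ having the same spectral gap $\lda \geq 0.99$. Defining $\tilde P(A \mid y) \deq P(T^{-1} A \mid T^{-1} y)$, the pushforward correspondence $\tilde f \mapsto f = \tilde f \circ T$ is an isometry between $L^2(\nu)$ and $L^2(\pi)$ which preserves mean-zero functions and satisfies $\widetilde{P}\tilde f = (Pf)\circ T^{-1}$. Hence $\snorm{\widetilde P \tilde f}_{L^2(\nu)} = \snorm{P f}_{L^2(\pi)} \leq (1-\lda)\snorm{f}_{L^2(\pi)} = (1-\lda)\snorm{\tilde f}_{L^2(\nu)}$, so $\tilde P$ inherits the spectral gap, and reversibility follows from the same substitution in the inner product. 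Thus $(\tilde P, \nu)$ satisfies Assumption~\ref{as:markov}.

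Now I would apply Theorem~\ref{thm:add-cov-Poincare} to the chain $(Y_i)$ with parameters $\veps' = \delta' = \veps/2$ (both in $(0,d)$ since $\veps \in (0,1)$ and $d \geq 200$). This yields, with probability at least $1-\fp/d$,
\[
\Abs{\overline{\Sigma}_Y - I_d} \preceq \tfrac{\veps}{2}\,I_d + \tfrac{\veps}{2}\,I_d = \veps\,I_d\,,
\]
provided
\[
N \asymp \frac{\tr I_d + \cpi(\nu)}{(\veps/2)(\veps/2)}\,\log^2\frac{d\max(d, \cpi(\nu)+d)}{\veps/2}\,\log^2 d \asymp \frac{d+\cpi(\nu)}{\veps^2}\,\log^2\frac{d(d+\cpi(\nu))}{\veps}\,\log^2 d\,,
\]
which matches the stated sample size. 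Since $\overline Y = T \overline X$ we have $\overline{\Sigma}_Y = T\,\overline{\Sigma}_X\,T^\T = \Sigma^{-1/2}\,\overline{\Sigma}\,\Sigma^{-1/2}$. Conjugating both sides of the additive bound by $\Sigma^{1/2}$ preserves the Loewner order and converts $\veps\,I_d$ into $\veps\,\Sigma$, yielding the desired $\Abs{\overline{\Sigma} - \Sigma} \preceq \veps\,\Sigma$.

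The main place where care is needed is the preservation of the spectral gap under the affine change of variables, since this is what allows us to invoke Theorem~\ref{thm:add-cov-Poincare} on $(\widetilde P, \nu)$; everything else is an immediate algebraic rewriting and a conjugation of the Loewner inequality by $\Sigma^{1/2}$. No subtle probabilistic estimates beyond those in the additive theorem are required.
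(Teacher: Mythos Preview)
Your proposal is correct and follows essentially the same approach as the paper: apply the isotropic change of coordinates $x\mapsto\Sigma^{-1/2}x$, invoke the earlier covariance-estimation theorem on the transformed chain (where $\Sigma=I_d$, $\tr\Sigma=d$), and conjugate back by $\Sigma^{1/2}$. The only cosmetic difference is that the paper invokes Theorem~\ref{thm:mul-cov-Poincare} (the multiplicative operator-norm form) rather than Theorem~\ref{thm:add-cov-Poincare} with $\veps'=\delta'=\veps/2$; in the isotropic case these yield the same Loewner bound and the same $N$. Your explicit verification that the affine pushforward preserves reversibility and the spectral gap is a detail the paper leaves implicit.
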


\begin{rem}[Poincar\'e constant of $\nu$] \label{rem:PI-pushforward}
For $\nu=(\Sigma^{-1/2})_{\#}\pi$, it holds in general that $\cpi(\nu)\leq\lda_{1}^{-2}\cpi(\pi)$ for the smallest eigenvalue $\lda_{1}$ of $\Sigma$. In particular, it is well-known by \cite{klartag2023logarithmic} that a log-concave distribution $\pi$ satisfies 
\[
\norm{\Sigma}_{\op}\le\cpi(\pi)\lesssim\norm{\Sigma}_{\op}\log d\,.
\]
\end{rem}
\begin{rem}[Independent samples]\label{rmk:independent}
    As a corollary, we can obtain an analogous result when the samples are independent by considering the case where the kernel $P(\cdot\,|\,x) = \pi$ identically. In this case, the spectral gap is $1$, and the previous results can apply without alteration. As far as we can tell, this is the first time that such a result has been explicitly presented for distributions satisfying~\eqref{eq:poincare}, which may be of independent interest.
\end{rem}

\paragraph{Proof sketch}

We sketch a proof for Theorem~\ref{thm:add-cov-Poincare} below, deferring the detailed analysis to \S\ref{app:proofs-main}. In our analysis, we show that the error can be divided into two terms
\[
\overline{\Sigma}-\Sigma =\frac{1}{N}\sum_{i=1}^{N}\bpar{(X_{i}-\mu)^{\otimes2}-\Sigma}-(\overline{X}-\mu)^{\otimes2}\,.
\]
The techniques we use for the mean and covariance error terms will be similar, so we shall concentrate on the argument for the first term. We split this into three sub-terms, similar to the argument in~\cite{adamczak2010quantitative}, where $B \subseteq \R^{N \times d}$ will be some suitably ``nice'' set,
\[
\frac{1}{N}\sum X_{i}^{\otimes2}-\Sigma=\underbrace{\frac{1}{N}\sum_{i=1}^N(X_{i}^{\otimes2}\ind_{B}-\E[X_{i}^{\otimes2}\ind_{B}])}_{\eqqcolon\msf A_1}+\underbrace{\frac{1}{N}\sum_{i=1}^N X_{i}^{\otimes2}\ind_{B^{c}}}_{\eqqcolon\msf A_2}-\underbrace{\E[X^{\otimes2}\ind_{B^{c}}]}_{\eqqcolon\msf A_3}\,.
\]
Since the first term relates to the concentration of a bounded matrix around its mean, it can be handled using Theorem~\ref{thm:dependent-bernstein} after some detailed calculations (Lemma~\ref{lem:covar_termA}). As for the rest, we would like to use a concentration inequality under~\eqref{eq:poincare} to ensure that $\msf A_2, \msf A_3$ are small. While the argument of~\cite{adamczak2010quantitative} uses independence of the iterates to establish an exponential tail decay of $\norm{X}$, we are only able to establish a polynomial decay of $\norm{X}$ under Markovian assumptions (Lemma~\ref{lem:covar_termBC}). This is sufficient for Theorem~\ref{thm:add-cov-Poincare}. Nonetheless, we also show a slightly improved tail bound in \S\ref{app:finer-tail}, and it may be possible to sharpen this in the future.

\section{Applications}\label{sec:applications}

\subsection{Isotropic rounding via uniform sampling}\label{sec:app-iso-rounding}

As a first application, we consider a version of the ``sampling-and-rounding'' scheme used in \cite{jia2021reducing}, and simplify analysis of their algorithm. This is summarized in the following problem, which is a keystone in rounding schemes for general convex bodies (see Remark~\ref{rem:rounding-general-body}).

\begin{problem}\label{prob:rounding}
Let $\K\subset\Rd$ be a well-rounded convex body containing a ball of radius $1$ (i.e., $B_{1}(0)\subset\K$ and $\E_{\pi}[\norm{\cdot}^{2}]\lesssim d$ for the uniform distribution $\pi$ over $\mc K$). Can we find an algorithm that makes $\K$ $c$-isotropic for $c\approx1$ (i.e., $c^{-1}I_{d}\preceq\cov(\pi)\preceq cI_{d}$) using at most $\Otilde(d^{3})$ queries to the membership oracle\footnote{A membership oracle for $\K$ answers YES or NO to the query of the form ``$x\in \K$''?} of $\K$?
\end{problem}

We note that if we obtain the mean $\mu$ and covariance $\Sigma$ of $\pi$, then the transformed convex body $\Sigma^{-1/2}(\K-\mu)$ is isotropic (i.e., $\E X=0$ and $\E[X^{\otimes2}]=I_{d}$); this procedure is termed \emph{rounding}. 
As elaborated in the sequel, it takes roughly $d^{3}$ queries to get a single (approximately) uniform sample from $\K$, and one needs roughly $d$ independent samples in general to obtain an accurate estimator. In the case where the samples are required to be i.i.d., it is inevitable that we pay $d^{4}$ queries in total, which would answer Problem~\ref{prob:rounding} in the negative.

To circumvent this, an algorithm proposed by \cite{jia2021reducing} repeats `sampling $\to$ approximate covariance estimation $\to$ rounding',
which gradually isotropizes $\K$. We provide details of their argument in \S\ref{app:comparison}, where we compare against our approach. 

\subsubsection{Algorithm}
\paragraph{Uniform sampling by $\protect\ino$.}

\cite{kook2024and} proposes $\ino$, which is essentially the proximal sampler
for uniformly sampling from a convex body $\K$ containing a ball of radius $r$.
This iterates two steps for some suitable $h\asymp r^{2}/d^{2}$: (1) $y_{i+1}\sim\mc N(x_{i},hI_{d})$ and (2) $x_{i+1}\sim\mc N(y_{i+1},hI_{d})|_{\K}$, where the second step is implemented by rejection sampling with proposal $z\sim\mc N(y_{i+1},hI_{d})$.
They show that from an $\O(1)$-warm start, $\ino$ iterates $n=\otilde{qr^{-2}d^{2}\cpi(\pi)\polylog\frac{1}{\veps}}=\otilde{qr^{-2}d^{2}\norm{\Sigma}_{\op}\polylog\frac{1}{\veps}}$ times to find a sample $X_{n}$ with $\eu R_{q}(\law(X_{n})\mmid\pi)\leq\veps$, using $\Otilde(n)$ membership queries in expectation for success probability at least $1-\delta$. 
In particular, denoting by $P$ the Markov kernel of one iteration, they showed an exponential contraction of $\ino$ in $\chi^{2}$,
\begin{equation}
\chi^{2}(\mu P\mmid\pi)\le\frac{\chi^{2}(\mu\mmid\pi)}{(1+h/\cpi(\pi))^{2}}\,.\label{eq:chi-contraction}
\end{equation}
The spectral gap of this step is roughly lower bounded by $\Omega(h\cpi^{-1})=\Omega(r^{2}d^{-2}\cpi^{-1})$.
Therefore, defining a new Markov kernel by $\overline{P}:=P^{n}$ for $n=\O(r^{-2}d^{2}\cpi(\pi))$, we can ensure that the spectral gap of the new kernel $\overline{P}$ is at least $0.99$. We denote $\ino_{N}(\mu,\nu,h)$ for the Markov chain with kernel $P^{N}$, initial distribution $\nu$, and target distribution $\mu$.

\paragraph{High-level description.}

Let $\pi_{\K}$ denote the uniform distribution over a convex body $\mc K$, and $r:=\msf{inrad}(\mc K)$ denote the radius of the largest ball contained in $\mc K$. Recall that $\ino$ needs roughly $d^{2}\norm{\Sigma}_{\op}/r^{2}$ queries per sample from a warm start, which indicates that the mixing of the sampler suffers from the skewness of $\K$ (i.e., how far $\Sigma$ is from $I_d$).

 Algorithm~\ref{alg:iterative_rounding}, a modified version of
\cite[Algorithm 2]{jia2021reducing}, starts off by generating a warm
start via $\gc$~\cite{kook2024renyi} using $d^{3}$ membership
queries. Then it follows the sampling-and-rounding scheme as described earlier, initialized at this warm start. It draws $r^{2}$ many samples to get a rough estimate of the covariance up to a $d$-additive error (i.e., $\abs{\overline \Sigma - \Sigma} \preceq dI_d$), using $r^{2} \cdot d^{2}\norm{\Sigma}_{\op}/r^{2}=d^{2}\norm{\Sigma}_{\op}$
queries in total. Then, the subspace corresponding to eigenvalues
of the covariance $\Sigma$ that are smaller than $d$ is doubled,
which almost doubles $\msf{inrad}(\mc K)$ as well. Hence, even
though the algorithm requires nearly four times as many samples in
the next iteration, this increase is exactly balanced by the reduced
query complexity per sample. This is repeated until $r^{2}$ reaches
roughly $d$, which implies that the total number of iterations is
$\O(\log d)$, since $r$ almost doubles every iteration. Furthermore,
the largest eigenvalue of the covariance, $\norm{\Sigma}_{\op}$,
increases at most roughly by $d$ every iteration. Since the operator
norm of the initial covariance is at most $d$, the largest eigenvalue
remains $\Otilde(d)$ throughout. Thus, the total query complexity
per iteration is $\Otilde(d^{3})$; the query complexity of the algorithm is the same with only a logarithmic overhead.

In the algorithm, let $\pi_i$ denote $\pi_{\K_i}$, and $\Sigma_i$ be its covariance.
\begin{algorithm}[H]
\hspace*{\algorithmicindent} \textbf{Input:} convex body $\mc K\subset\Rd$
and $T_{1}\in\Rdd$ such that $\mc K_1:=T_1\mc K$ satisfies $\nicefrac{1}{4}\leq\msf{inrad}(\mc K_{1})$ and
$\ensuremath{\E_{\pi_{\mc K_{1}}}[\norm{\cdot}^{2}]\leq C^{2}d}$ for constant $C>0$. 

\hspace*{\algorithmicindent} \textbf{Output:} $\ensuremath{(\widehat{\mu},\widehat{\Sigma}^{-1/2})}.$

\begin{algorithmic}[1]

\STATE Run $\gc$ to obtain $X_{0}\in\mc K_{1}$
with $\eu R_{\infty}(\law(X_{0})\mmid\pi_{\mc K_{1}})\leq\log2$.\label{line:GC-warm}

\STATE Let $r_{1}=1/4$, $\pi_1 := \pi_{\K_1}$, and $i=1$.

\WHILE{$r_{i}^{2}\leq \tfrac{d}{2^{10}\log^{4}d}$} 

\STATE Set $k_{i}=10cr_{i}^{2}C^{2}\log^{6}C^{2}d$ for some universal
constant $c$. 

\STATE Draw $\{X_j\}_{j\in [k_i]} \gets \ino_{N_{i}}\bpar{\pi_{i},\delta_{X_{0}},\frac{r_{i}^{2}}{2^{10}d^{2}\log(Ccr_{i}d)}}$ with $N_{i}=\frac{2^{10}C^{2}d^{3}\log(Ccd)}{r_i^2}$. \label{line:INO-setting}

\STATE Compute $\widehat{\mu}_{i}=\frac{1}{k_{i}}\sum_{j=1}^{k_{i}}X_{j}$
and $\widehat{\Sigma}_{i}=\frac{1}{k_{i}}\sum_{j=1}^{k_{i}}(X_{j}-\widehat{\mu}_{i})^{\otimes2}$.
\label{line:mean-cov-estimate}

\STATE Compute $M_{i}=I_{d}+P_{i}$, where $P_{i}$ is the orthogonal projection
to the subspace spanned by eigenvectors of $\widehat{\Sigma}_{i}$
with eigenvalue at most $d$.

\STATE Set $T_{i+1}=M_{i}T_{i}$, $\K_{i+1}=M_{i}\K_i$, $X_{0}\gets M_{i}X_{0}$, $r_{i+1}=2r_{i}(1-\nicefrac{1}{\log d})$, and $i\gets i+1$.
\label{line:r-doubling}

\ENDWHILE

\STATE Draw $c'd\log^{6}d$ outputs from $\ino_{N}(\pi_{i},\delta_{X_{0}},\frac{1}{2^{20}d\log(Ccr_{i}d)})$
with $N_{i}=2^{20}C^{2}d^{2}\log^{5}(Ccd)$ (for some universal constant
$c'$), and use them to compute the mean $\widehat{\mu}$
and covariance $\widehat{\Sigma}$.\label{line:last-estimation}

\end{algorithmic}\caption{$\protect\msf{Isotropize}$ \label{alg:iterative_rounding}}
\end{algorithm}

\subsubsection{Comparison between approaches}\label{app:comparison}
\cite{jia2021reducing} used $\bw$ (implemented using $\sw$, followed by a rejection
step) started at an initial distribution $\mu$, whose query complexity
for obtaining a $\veps$-close sample (to $\pi$) in $\tv$ is $\O(Md^{2}\psi_{\text{KLS}}^{2}\log^{\O(1)}\frac{1}{\veps})$,
where $M=\sup_{\K}\frac{\mu}{\pi}$ is a warmness parameter, and $\psi_{\text{KLS}}$
is defined by $\psi_{\text{KLS}}^{-1}=\inf_{S\subset\Rd}\frac{\pi^{+}(S)}{\pi(S)\wedge\pi(S^{c})}$.
As for the approximate covariance estimation, they took the same approach as us; they draw a few samples to detect a subspace of small eigenvalues of the covariance matrix, followed by the upscaling of this `skewed' subspace.

When drawing samples used for a covariance estimation, they generate an $\O(1)$-warm sample and then initialize several parallel independent threads of $\bw$, with \emph{the warm sample} used as an initial distribution. 
Therefore, these $\veps$-accurate samples are independent \textbf{conditioned on} the warm sample, but still dependent overall. Then, they rely on a covariance estimation lemma assuming \emph{independence of these samples}, justifying their argument via a reference to \emph{$\mu$-independence} (also known as the $\alpha$\emph{-mixing} in the statistic literature). They mention that with probability $1-\delta$, this procedure, iterated  $\O(\log\frac{1}{\delta})$ more times, can succeed in estimating the covariance. We refer interested readers to their paper \cite[Computational Model]{jia2021reducing}, which references \cite[\S3.2]{lovasz2006simulated}.

We introduce two main changes to their approach -- (1) we take sequential samples instead of maintaining several independent threads and (2) we use $\ino$ instead of $\bw$. 
We believe this is more principled in that the query complexity $\O(Md^{2}\cpi(\pi)\log^{\O(1)}\frac{1}{\veps})$ of $\ino$ needed for convergence in $\chi^{2}$ has a direct connection to the Poincar\'e constant $\cpi(\pi)$ of the uniform distribution.
Thus, if we take $\O(d^{2}\cpi(\pi))$ iterations of $\ino$ as a single iteration
of a new Markov chain, then it immediately follows that the spectral
gap of the new chain can be made at least $0.99$ (or any constant as close to $1$ as desired). This allows us to use our result developed in \S\ref{sec:covar_est}, streamlining analysis for statistical estimation using dependent samples.
In addition to this, by incorporating the recent improvement of $\cpi(\pi)$~\cite{klartag2023logarithmic} (or equivalently $\psi_{\text{KLS}}^{2}$) from $d^{o(1)}\norm{\Sigma}_{\op}$
to $\norm{\Sigma}_{\op}\log d$, we bypass the anisotropic KLS bound
developed in \cite{jia2021reducing}. Combining all these changes
streamlines our analysis significantly.

\subsubsection{Analysis}
We analyze each algorithmic component under the event that all the previous subroutines succeed.
All the proofs for this section are deferred to \S\ref{app:rounding}.
\paragraph{(1) Guarantees of the sampler.}
First, we ensure that $\ino$ indeed satisfies Assumption~\ref{as:markov}, which is a consequence of the convergence rates established in earlier work~\cite{kook2024and}. 
\begin{lem}
\label{lem:INO-setting} $\ino_{N_{i}}(\pi_{i},\cdot,\frac{r_{i}^{2}}{2^{10}d^{2}\log(Ccr_{i}d)})$
with $N_{i}=2^{10}C^{2}d^{3}r_{i}^{-2}\log d$ has a spectral gap
of at least $0.99$ (or any desired constant approaching $1$).
\end{lem}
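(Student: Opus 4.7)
The plan is to derive the claimed spectral gap by iterating the per-step $\chi^2$-contraction of $\ino$ and controlling the Poincar\'e constant of $\pi_i$ via Klartag's inequality, with $\norm{\Sigma_i}_{\op}$ tracked inductively through the outer loop of Algorithm~\ref{alg:iterative_rounding}.

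First, I would unroll the $\chi^2$-contraction~\eqref{eq:chi-contraction} for the $\ino$ kernel $P$ with step size $h_i = r_i^2 / (2^{10} d^2 \log(Ccr_i d))$: applying the single-step contraction $N_i$ times gives, for any input measure $\mu$,
\[
\chi^2(\mu P^{N_i} \mmid \pi_i) \leq (1 + h_i/\cpi(\pi_i))^{-2N_i}\,\chi^2(\mu \mmid \pi_i).
\]
Since $P$ is reversible (the Gaussian proposal is symmetric and rejection targets $\pi_i$), this chi-squared contraction is equivalent to $L^2(\pi_i)$-contraction of mean-zero test functions, which by Definition~\ref{def:spectral} translates to
\[
\lambda(P^{N_i}) \;\geq\; 1 - (1 + h_i/\cpi(\pi_i))^{-N_i}.
\]

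Next, I would upper-bound $\cpi(\pi_i)$. Because $\pi_i$ is uniform on the convex body $\K_i$ and hence log-concave, Remark~\ref{rem:PI-pushforward} (Klartag's bound) gives $\cpi(\pi_i) \leq c_0 \norm{\Sigma_i}_{\op} \log d$ for an absolute constant $c_0$. An induction on $i$ then shows $\norm{\Sigma_i}_{\op} = \O(C^2 d)$ throughout: initially $\norm{\Sigma_1}_{\op} \leq \tr \Sigma_1 \leq C^2 d$, and each rounding matrix $M_i = I + P_i$ at most quadruples the eigenvalues of $\Sigma_i$ lying below (approximately) $d$ while leaving the larger ones fixed, since $P_i$ is the projection onto the corresponding eigenspace of the estimated $\widehat \Sigma_i$, which tracks $\Sigma_i$ up to the additive error guaranteed by Theorem~\ref{thm:add-cov-Poincare} applied at the previous outer iteration. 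Consequently $\cpi(\pi_i) \lesssim C^2 d \log d$ uniformly in $i$.

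Combining these two ingredients, the ratio $N_i h_i / \cpi(\pi_i)$ can be made as large as desired by enlarging the universal constants in $N_i$ and $h_i$; the choice of $2^{10}$ in the statement is calibrated so that $N_i \log(1 + h_i / \cpi(\pi_i)) \geq \log 100$, which forces $(1 + h_i/\cpi(\pi_i))^{-N_i} \leq 10^{-2}$ and yields $\lambda(P^{N_i}) \geq 0.99$. The parenthetical ``any desired constant approaching $1$'' simply reflects this freedom to inflate the constant.

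The main obstacle is not the chi-squared calculus above but the inductive maintenance of $\norm{\Sigma_i}_{\op} = \O(C^2 d)$: the algorithm forms $P_i$ from the \emph{estimated} $\widehat \Sigma_i$, so one must argue that its small-eigenvalue subspace is genuinely close to that of the true $\Sigma_i$, which in turn relies on Theorem~\ref{thm:add-cov-Poincare} whose hypothesis is the spectral gap we are presently establishing. This circular dependence should be broken via a joint induction over $i$ on the pair (spectral gap of $P^{N_i}$, operator-norm bound on $\Sigma_i$), which is standard in analyses of iterated rounding procedures.
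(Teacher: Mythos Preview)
Your plan follows the paper's proof closely: iterate the per-step $\chi^2$-contraction~\eqref{eq:chi-contraction}, convert to a spectral-gap lower bound for $P^{N_i}$, bound $\cpi(\pi_i)$ via Klartag, and control $\norm{\Sigma_i}_{\op}$ inductively over the outer loop. The paper packages that last step in the forward-referenced Lemma~\ref{lem:basic_property_isotropization} and resolves the circularity exactly as you suggest, by analyzing each component conditionally on the success of all prior subroutines.

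Two differences are worth flagging. First, the paper's proof also justifies the specific form of $h_i$: the logarithmic factor is not for the contraction rate but to keep the rejection-sampling failure probability of $\ino$ below $1/d$ over the $\Otilde(C^4 d^3)$ total iterations (invoking \cite[Theorem~27]{kook2024and}). Your plan treats $h_i$ as given, which suffices for the spectral-gap assertion per se but omits this side of the argument. Second, your sketch of the operator-norm induction is not correct as written: you say $M_i = I + P_i$ ``leaves the larger [eigenvalues of $\Sigma_i$] fixed'', but $P_i$ projects onto eigenvectors of the \emph{estimated} $\widehat\Sigma_i$, so $M_i \Sigma_i M_i$ need not preserve the large-eigenvalue subspace of $\Sigma_i$ even under accurate estimation. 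The paper instead proves the additive bound $\norm{\Sigma_{i+1}}_{\op} \leq \norm{\Sigma_i}_{\op} + 3\norm{P_i \Sigma_i P_i}_{\op}$ and shows $\norm{P_i \Sigma_i P_i}_{\op} \leq 2d$ from the estimation guarantee, giving $\norm{\Sigma_i}_{\op} \leq d(C^2 + 6i) = \O(C^2 d \log d)$ after $\O(\log d)$ rounds---slightly larger than the $\O(C^2 d)$ you claim, but still sufficient for the chosen $N_i$.
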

We present a version of Theorem~\ref{thm:add-cov-Poincare} tailored
to this application. Its proof essentially follows that of Theorem~\ref{thm:add-cov-Poincare}, using the spectral-gap condition of $\ino$ and the fact that the initial distribution $\law(X_0)$ is close to $\pi_1$ (see Line~\ref{line:GC-warm}).

\begin{lem}\label{lem:cov-estimation-app1} There exists a universal constant $c>0$ such that when $\ino_{N_i}$ successfully iterates without failure, each while-loop ensures that with probability at least $1 - \nicefrac{\fp}{d}$
\[
\frac{9}{10}\,\Sigma_{i}-\frac{d}{100}\,I_{d}\preceq\widehat{\Sigma}_{i}\preceq\frac{11}{10}\,\Sigma_{i}+\frac{d}{100}\,I_{d} \quad \text{ if } N\geq cd^{-1}\tr\Sigma_{i}\log^{6}C^{2}d\,.
\]
\end{lem}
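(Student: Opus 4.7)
The plan is to invoke Theorem~\ref{thm:add-cov-Poincare} directly on the sample sequence $\{X_j\}_{j\in[k_i]}$ generated by the $i$-th while-loop, with parameter choices $\veps=\tfrac{1}{10}$ and $\delta=\tfrac{d}{100}$; the stated covariance sandwich is then exactly what the theorem outputs. To apply the theorem I must verify three hypotheses: a spectral-gap condition on the effective sampling kernel, a Poincar\'e inequality for $\pi_i$, and an initial-distribution condition compatible with Assumption~\ref{as:markov}.

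The effective kernel of the sample sequence is $\bar P\deq P^{N_i}$, where $P$ is a single $\ino$ step. On the ``no-failure'' event that the rejection substeps all succeed (which is the standing hypothesis of the lemma), Lemma~\ref{lem:INO-setting} gives $\bar P$ a spectral gap of at least $0.99$, so Assumption~\ref{as:markov} applies with $\bar P$ as the kernel. For the Poincar\'e inequality, $\pi_i$ is the uniform distribution on the convex body $\K_i$ and hence log-concave, so Remark~\ref{rem:PI-pushforward} (Klartag) gives $\cpi(\pi_i)\lesssim\norm{\Sigma_i}_{\op}\log d\leq(\tr\Sigma_i)\log d$, and therefore $\tr\Sigma_i+\cpi(\pi_i)\lesssim(\tr\Sigma_i)\log d$. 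For the warm start, $X_0$ is produced by $\gc$ on Line~\ref{line:GC-warm} with $\eu R_{\infty}(\law(X_0)\mmid\pi_{\K_1})\leq\log 2$; since $\eu R_{\infty}$ is invariant under pushforward by linear bijections and $(M_{i-1}\cdots M_1)_{\#}\pi_{\K_1}=\pi_i$, the same warmness bound persists at iteration $i$. In particular $\chi^{2}(\law(X_0)\mmid\pi_i)=e^{\eu R_{2}}-1\leq e^{\log 2}-1=1$, so by the change-of-measure remark following Assumption~\ref{as:markov} (Lemma~\ref{lem:change-measure}), initializing at $\law(X_0)$ rather than at $\pi_i$ inflates the failure probability by a factor of at most $2$.

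Substituting $\veps=\tfrac{1}{10}$ and $\delta=\tfrac{d}{100}$ into Theorem~\ref{thm:add-cov-Poincare}, and using $\tr\Sigma_i\leq d\,\norm{\Sigma_i}_{\op}\lesssim d^{2}\log d$ to bound the inner argument of $\log^2$ by $\poly(d)$, the sample requirement becomes
\[
N\gtrsim\frac{\tr\Sigma_i+\cpi(\pi_i)}{\veps\delta}\,\log^{2}\!\Bpar{\tfrac{d\max(d,\cpi(\pi_i)+\tr\Sigma_i)}{\delta}}\,\log^{2}d\;\lesssim\;\frac{\tr\Sigma_i}{d}\,\log^{5}d\,,
\]
which is absorbed by the stated $N\geq c\,d^{-1}\tr\Sigma_i\log^{6}(C^{2}d)$ for a suitable universal constant $c$, yielding the claim. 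The main obstacle is the third hypothesis: one has to observe that $\eu R_{\infty}$-divergence is invariant under the rounding pushforwards $M_j$, so that a single up-front call to $\gc$ produces a warm start at \emph{every} subsequent iteration and no repeated cold-start cost is incurred; the remaining work is routine parameter chasing through Theorem~\ref{thm:add-cov-Poincare} together with the Klartag Poincar\'e bound for log-concave measures.
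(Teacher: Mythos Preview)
The proposal is correct and follows the same argument as the paper: both apply Theorem~\ref{thm:add-cov-Poincare} with $\veps=\tfrac{1}{10}$, $\delta=\tfrac{d}{100}$, invoke Lemma~\ref{lem:INO-setting} for the spectral gap, bound $\cpi(\pi_i)\lesssim\tr\Sigma_i$ (up to a log) via log-concavity of $\pi_i$, and absorb the non-stationary start through a change-of-measure argument. Your handling of the warm start is slightly cleaner than the paper's---you make explicit that $\eu R_\infty$ is invariant under the linear pushforwards $M_j$ (so the single $\gc$ call yields a warm start at every iteration) and reduce directly to Lemma~\ref{lem:change-measure} via $\chi^2\le e^{\eu R_\infty}-1\le 1$, whereas the paper instead re-verifies that both the bad-event probabilities and the second/fourth moment bounds inside the proof of Theorem~\ref{thm:add-cov-Poincare} are inflated by at most a constant factor under the density bound $\nu/\pi_i\le 2$.
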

Leveraging this, we can apply our Markovian covariance estimation machinery in the form of Corollary~\ref{cor:mul-cov-Poincare} to obtain a covariance concentration bound for Line~\ref{line:last-estimation}.

\paragraph{(2) Control over trace and operator norm.}
The following lemma establishes quantitative control over changes of the trace and operator norm of the covariance $\Sigma_{i}$ at each iteration.
We present a simpler proof of \cite[Lemma 3.1]{jia2021reducing}, bypassing the need for an anisotropic KLS bound.
\begin{lem}[Control over covariance] \label{lem:basic_property_isotropization}
The while-loop iterates at most $2\log d$ times. Also,
\begin{enumerate}
\item $\norm{\Sigma_{i}}_{1}=\tr\Sigma_{i}\leq10r_{i}^{2}C^{2}d$. 
\item $\|\Sigma_{i}\|_{\op}\le d(C^{2}+6i)$.
\end{enumerate}
\end{lem}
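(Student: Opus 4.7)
The plan is to analyze the common recursion
\[
\Sigma_{i+1} = M_i\Sigma_i M_i = (I+P_i)\,\Sigma_i\,(I+P_i)\,,
\]
which holds because $\pi_{i+1}$ is the pushforward of $\pi_i$ under the invertible map $M_i$. On the success event of all prior subroutines, Lemma~\ref{lem:cov-estimation-app1} gives $\widehat\Sigma_i \succeq \tfrac{9}{10}\Sigma_i - \tfrac{d}{100}\,I_d$; combining with the defining inequality $P_i\widehat\Sigma_i P_i \preceq d\,P_i$ and rearranging produces the key intermediate estimate $P_i\Sigma_i P_i \preceq \tfrac{6}{5}\,d\cdot P_i$. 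This will be the workhorse for both parts.

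For the iteration-count claim, I would simply unroll the geometric recursion $r_{i+1} = 2r_i(1-1/\log d)$ from $r_1 = 1/4$: each step multiplies $r_i$ by at least $3/2$ for $d$ sufficiently large, so $r_i^2$ crosses the loop threshold $d/(2^{10}\log^4 d)$ within at most $2\log d$ iterations, with ample slack.

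For Part~2, I would fix any unit vector $v$ and split $v = v_\parallel + v_\perp$ with $v_\parallel\in\mrm{range}(P_i)$, so that $M_iv = 2v_\parallel + v_\perp$. Expanding
\[
\langle v,\Sigma_{i+1}v\rangle = 4\langle v_\parallel,\Sigma_i v_\parallel\rangle + 4\langle v_\parallel,\Sigma_i v_\perp\rangle + \langle v_\perp,\Sigma_i v_\perp\rangle
\]
and bounding the parallel term by $\tfrac{6}{5}d\,\|v_\parallel\|^2$, the perpendicular term by $\|\Sigma_i\|_{\op}\|v_\perp\|^2$, and the cross term via Cauchy--Schwarz in the $\Sigma_i$-inner product, a second Cauchy--Schwarz step collapses everything to $\|\Sigma_{i+1}\|_{\op} \leq \|\Sigma_i\|_{\op} + \tfrac{24}{5}\,d$. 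Using $\|\Sigma_1\|_{\op}\leq\tr\Sigma_1\leq C^2 d$, induction immediately yields $\|\Sigma_i\|_{\op}\leq(C^2+6i)\,d$.

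For Part~1, trace cyclicity together with $P_i^2 = P_i$ gives the clean identity $\tr\Sigma_{i+1} = \tr\Sigma_i + 3\tr(P_i\Sigma_i P_i)$. There are two available bounds on $\tr(P_i\Sigma_i P_i)$ --- the multiplicative $\leq\tr\Sigma_i$ and the additive $\leq\tfrac{6}{5}d\,\rank(P_i)\leq \tfrac{6}{5}d^2$ --- which must be combined carefully. The multiplicative bound gives $\tr\Sigma_{i+1}\leq 4\tr\Sigma_i$, which meshes with $r_{i+1}^2 = 4(1-1/\log d)^2 r_i^2$: the ratio $\tr\Sigma_i/r_i^2$ grows by at most the factor $(1-1/\log d)^{-2}$ per iteration, which accumulates to $O(1)$ over the $O(\log d)$ iterations, closing the induction from the base $\tr\Sigma_1/r_1^2 \leq 16\,C^2 d$. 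The main obstacle is precisely this trace bound: one needs to balance the multiplicative and additive regimes and absorb the slow leakage from $(1-1/\log d)$, so that the proportionality $\tr\Sigma_i\lesssim r_i^2\cdot C^2 d$ persists uniformly in $i$ and matches the sample-size choice $k_i\asymp r_i^2\,C^2\,\polylog(d)$ prescribed by Lemma~\ref{lem:cov-estimation-app1}.
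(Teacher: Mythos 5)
Your proposal is correct and follows the paper's structure closely: the iteration count comes from the same geometric growth of $r_i$, the trace bound from the same multiplicative step $\tr\Sigma_{i+1}\leq 4\tr\Sigma_i$ (the additive bound you mention is never actually needed, and the paper doesn't invoke it --- the multiplicative bound plus the $(1-1/\log d)^{-2}$ ratio accounting suffices), and the key intermediate estimate $P_i\Sigma_iP_i\preceq O(d)\,P_i$ is exactly the one the paper derives from Lemma~\ref{lem:cov-estimation-app1} and the spectral truncation defining $P_i$. The one place where you take a genuinely different route is Part~2. The paper manipulates operator norms directly: it uses the fact that $AA^\T$ and $A^\T A$ share a spectrum to write $\|M_i\Sigma_iM_i\|_{\op}=\|\Sigma_i^{1/2}M_i^2\Sigma_i^{1/2}\|_{\op}$, expands $M_i^2=I+3P_i$, and bounds $\|\Sigma_i^{1/2}P_i\Sigma_i^{1/2}\|_{\op}=\|P_i\Sigma_iP_i\|_{\op}\leq 2d$, giving $\|\Sigma_{i+1}\|_{\op}\leq\|\Sigma_i\|_{\op}+6d$. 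You instead expand the quadratic form $\langle v,\Sigma_{i+1}v\rangle$ in the orthogonal decomposition $v=v_\parallel+v_\perp$ and apply Cauchy--Schwarz twice (once in the $\Sigma_i$-inner product, once as AM-GM on the resulting $2\times 2$ quadratic form), arriving at $\|\Sigma_{i+1}\|_{\op}\leq\|\Sigma_i\|_{\op}+\tfrac{24}{5}d$. Your derivation is more elementary --- it avoids the matrix-spectrum identity entirely and works pointwise on test vectors --- and it happens to yield a marginally better constant, though both feed into the same linear recursion and the same final bound $(C^2+6i)\,d$. The difference is stylistic rather than conceptual, but a reader wanting to avoid the $AA^\T$/$A^\T A$ trick might prefer your route.
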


Lastly, we prove that the inner radius $\msf{inrad}(\K_{i})$ almost doubles every iteration, providing a rigorous proof of \cite[Lemma 3.2]{jia2021reducing}.
\begin{lem}[Control over $\msf{inrad}$]\label{lem:inner-rad-double}
Under $r_{i+1}=2(1-\nicefrac{1}{\log d})r_{i}$, each while-loop ensures $r_{i}\leq\msf{inrad}(\K_{i})$.
\end{lem}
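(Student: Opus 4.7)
The plan is to induct on $i$. The base case $r_1 = 1/4 \leq \msf{inrad}(\K_1)$ holds by the input hypothesis. For the inductive step, suppose that $r_i \leq \msf{inrad}(\K_i)$, so $B_{r_i}(x_0) \subseteq \K_i$ for some $x_0$, and that the covariance estimation at iteration $i$ (Lemma~\ref{lem:cov-estimation-app1}) succeeded, giving $\frac{9}{10}\Sigma_i - \frac{d}{100}I \preceq \widehat{\Sigma}_i \preceq \frac{11}{10}\Sigma_i + \frac{d}{100}I$; the goal is to show $r_{i+1} = 2(1-1/\log d)r_i \leq \msf{inrad}(M_i \K_i)$.

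The key observation is that on $V_i^\perp$ the true covariance must be large: any unit $v \in V_i^\perp$ has $v^T \widehat{\Sigma}_i v > d$, which by the sandwich bound yields $v^T \Sigma_i v \geq \frac{10}{11}(d - d/100) \geq \frac{9}{10}d$. Thus the marginal of $\pi_{\K_i}$ in any direction of $V_i^\perp$ is a bounded log-concave measure with variance at least $\frac{9}{10}d$, so by Popoviciu's inequality $\K_i$ has width $\Omega(\sqrt{d})$ in every such direction. Combined with classical convex-geometric results on inscribed ellipsoids for near-isotropic convex bodies, the projection $P_{V_i^\perp}\K_i$ contains an inscribed ball of radius $\rho = \Omega(\sqrt{d})$ at some point $z^* \in V_i^\perp$. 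Moreover, the trace bound $\tr \Sigma_i \leq 10 r_i^2 C^2 d$ from Lemma~\ref{lem:basic_property_isotropization} combined with $v^T \Sigma_i v \geq \tfrac{9}{10}d$ on $V_i^\perp$ yields $\dim V_i^\perp = \mc O(r_i^2)$.

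My plan to conclude is to produce an ellipsoid $E \subseteq \K_i$ with semi-axes $r_i(1-1/\log d)$ in $V_i$ and $2r_i(1-1/\log d)$ in $V_i^\perp$, centered at $y' \in \K_i$ with $y'_{V_i} = (x_0)_{V_i}$ and $y'_{V_i^\perp} = z^*$: then the image $M_i E$ will be a Euclidean ball of radius $r_{i+1}$ contained in $\K_{i+1}$. The two principal directions are handled immediately. For unit $\hat u \in V_i$, the support $h_{\K_i - y'}(\hat u) \geq r_i \geq r_i(1-1/\log d)$ inherits from the inscribed ball at $x_0$; for unit $\hat w \in V_i^\perp$, the support $h_{\K_i - y'}(\hat w) \geq \rho = \Omega(\sqrt{d})$, which is at least $2 r_i$ by the while-loop condition $r_i \ll \sqrt{d}$.

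The main obstacle I anticipate is the mixed-direction interpolation. For $v = \cos\theta\, \hat u + \sin\theta\, \hat w$ with $\hat u \in V_i$, $\hat w \in V_i^\perp$ unit vectors, the target ellipsoid's radial support is $r_i(1-1/\log d)\sqrt{\cos^2\theta + 4\sin^2\theta}$, while support-function sublinearity only delivers lower bounds of the
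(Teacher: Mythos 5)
Your proposal is incomplete and, even up to the point where it stops, contains gaps that the paper's proof is specifically designed to avoid.

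First, the construction of the candidate center $y'$ is problematic. You define $y'$ by mixing coordinates: $y'_{V_i} = (x_0)_{V_i}$ and $y'_{V_i^\perp} = z^*$, where $x_0$ is the center of the inscribed ball and $z^*$ is some point found from a projection argument. Nothing guarantees that this hybrid point lies in $\K_i$, let alone that an ellipsoid centered there fits inside the body. The paper avoids this by inscribing \emph{two} full-dimensional ellipsoids in $\K_{i+1}$ --- the image $\mc A$ of the ball $B_{r_i}(c_i)$ and the inertia ellipsoid $\mc B = \{y: (y-\mu_{i+1})^\T \Sigma_{i+1}^{-1}(y-\mu_{i+1}) \le 1\}$ from \cite[Lemma 3.4]{lovasz2006simulated} --- and then taking the convex combination $c_l = l c_{\mc A} + (1-l)c_{\mc B}$ of their \emph{known} centers, so that $c_l \in \K_{i+1}$ is automatic by convexity. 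Your Popoviciu-plus-projection route is also looser than needed: width in every direction of $V_i^\perp$ does not control where the large cross-section sits, whereas the inertia ellipsoid is anchored at $\mu_i$, which is exactly what makes the convex-combination step go through.

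Second, you have correctly identified the crux --- the mixed-direction interpolation --- but the proposal is truncated precisely there. Support-function sublinearity runs the wrong way (it gives upper bounds $h(u+w) \le h(u) + h(w)$, not lower bounds), so you cannot finish by that route. The paper's resolution is a constructive convexity argument: it shows that any $x$ with $\|x\| \le 2clr_i$ (centered at $c_l$) can be written as $x = (1-t)\frac{(I-P)x}{1-t} + t \frac{Px}{t}$ with the two pieces landing in the two lower-dimensional balls $\msf B_{\mc S}$, $\msf B_{\mc S^\perp}$ respectively, reducing the problem to a one-dimensional optimization solved by Lagrange multipliers. That computation produces the explicit constant $c = (1 + \frac{400l^2 r_i^2}{81(1-l)^2 d})^{-1/2}$, and the while-loop threshold $2^{10} r_i^2 \log^4 d \le d$ is invoked precisely to force $c \ge l$, giving $\msf{inrad}(\K_{i+1}) \ge 2l^2 r_i = r_{i+1}$. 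Without something quantitatively equivalent, your ellipsoid $E$ cannot be verified to lie inside $\K_i$, and the lemma is not proved. You also silently omit the boundary cases $r=0$ and $r=d$, which the paper handles separately (one of the two ellipsoids becomes full-dimensional and the mixing step is unnecessary).
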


Putting all of the aforementioned results together, we conclude that Algorithm~\ref{alg:iterative_rounding} returns $\widehat{\mu}$ and $\widehat{\Sigma}$ such that $\widehat{\Sigma}^{-1/2}(\K-\widehat{\mu})$ is nearly isotropic with high probability. 

\begin{lem}\label{lem:rounding-final}
Algorithm~\ref{alg:iterative_rounding} returns $(\widehat{\mu},\widehat{\Sigma})$ such that $\widehat{\Sigma}^{-1/2}(\mc K-\widehat{\mu})$ is $2$-isotropic with probability at least $1-\nicefrac{\fp}{\sqrt{d}}$, using $\Otilde(C^{4}d^{3})$ membership queries in expectation.
\end{lem}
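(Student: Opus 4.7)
The plan is to analyze $\msf{Isotropize}$ phase-by-phase, keeping a running tally of both query complexity and failure probability, and then to combine these via a union bound at the end. There are three phases: (I) the $\gc$ warm-start in Line~\ref{line:GC-warm}, (II) the while-loop of approximate covariance estimation and partial rounding, and (III) the final multiplicative covariance estimation in Line~\ref{line:last-estimation}.

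\textbf{Phase I.} Here I would invoke the complexity guarantee of $\gc$~\cite{kook2024renyi}, which from scratch produces $X_0$ with $\eu R_\infty(\law(X_0)\mmid\pi_{\K_1})\leq\log 2$ using $\Otilde(d^3)$ membership queries and failing with probability $\leq\fp/d$. Note that $\K_1$ is well-rounded by hypothesis, so this is the correct regime for $\gc$.

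\textbf{Phase II.} For each iteration $i$, Lemma~\ref{lem:INO-setting} ensures the composed kernel has spectral gap $\geq 0.99$, so Assumption~\ref{as:markov} holds with $\pi=\pi_i$. Since the pushforward of a warm start under the invertible affine map $M_{i-1}$ remains warm for $\pi_i$ (R\'enyi divergences are invariant under bijections), and since $\tr\Sigma_i\lesssim r_i^2 C^2 d$ by Lemma~\ref{lem:basic_property_isotropization}, the sample count $k_i$ meets the hypothesis of Lemma~\ref{lem:cov-estimation-app1}. Thus $\frac{9}{10}\Sigma_i-\frac{d}{100}I_d\preceq\widehat\Sigma_i\preceq\frac{11}{10}\Sigma_i+\frac{d}{100}I_d$ with probability $1-\nicefrac{\fp}{d}$. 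On this event, Lemma~\ref{lem:inner-rad-double} gives $r_{i+1}\leq\msf{inrad}(\K_{i+1})$, keeping the inductive hypothesis intact, while Lemma~\ref{lem:basic_property_isotropization} bounds the number of iterations by $2\log d$ and controls $\|\Sigma_i\|_{\op}=\Otilde(d)$ throughout. Query-wise, each iteration costs $k_i N_i=\Otilde(C^4 d^3)$ in expectation (the expectation coming from the rejection step of $\ino$), so over $\O(\log d)$ iterations the total is $\Otilde(C^4 d^3)$.

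\textbf{Phase III.} After the loop exits, $r_i^2\gtrsim d/\polylog(d)$ so $\K_i$ is well-rounded, and $\|\Sigma_i\|_{\op}=\Otilde(d)$ by Lemma~\ref{lem:basic_property_isotropization}. By Remark~\ref{rem:PI-pushforward} the Poincar\'e constant of the whitened measure $\nu=(\Sigma_i^{-1/2})_\#\pi_i$ is $\Otilde(1)$, so Corollary~\ref{cor:mul-cov-Poincare} with $\veps=\tfrac{1}{2}$ needs $\Otilde(d)$ samples. Each sample costs $N_i=\Otilde(d^2)$ queries in this regime (since $r_i^2\asymp d$), giving $\Otilde(d^3)$ total. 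The resulting $\widehat\Sigma$ satisfies $|\widehat\Sigma-\Sigma_i|\preceq\tfrac{1}{2}\Sigma_i$, which precisely means $\widehat\Sigma^{-1/2}$ renders the body $2$-isotropic.

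\textbf{Accounting.} Summing, the total expected query complexity is $\Otilde(C^4 d^3)$; union-bounding the $\O(\log d)$ failure events, each of probability $\fp/d$, yields total failure probability $\lesssim\fp\log d/d\leq\fp/\sqrt d$ for $d$ large. The delicate points I expect to be the main obstacle are (a) verifying that the warmness of the current iterate is properly maintained after each pushforward by $M_i$ and further mixing steps of $\ino$, so that Lemma~\ref{lem:cov-estimation-app1} can be reapplied at each iteration with the same constant, and (b) ensuring that all the polylogarithmic factors accumulated over $\log d$ iterations remain absorbable in $\Otilde(\cdot)$ without creating a hidden $d^{o(1)}$ overhead.
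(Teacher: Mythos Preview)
Your proposal is correct and follows essentially the same three-phase structure as the paper's proof, invoking the same supporting lemmas (Lemma~\ref{lem:INO-setting}, Lemma~\ref{lem:cov-estimation-app1}, Lemma~\ref{lem:basic_property_isotropization}, Lemma~\ref{lem:inner-rad-double}, Corollary~\ref{cor:mul-cov-Poincare}) in the same places. The only cosmetic difference is that the paper also isolates the rejection-sampling failure of $\ino$ as its own bad event with total probability $\nicefrac{\fp}{d}$ (this is what the choice of $h_i$ in Line~\ref{line:INO-setting} guarantees, and is already implicit in your appeal to Lemma~\ref{lem:INO-setting}), whereas your accounting folds this in without naming it; the union bound is unaffected.
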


\begin{rem}[General bodies]\label{rem:rounding-general-body}
Now we can design a rounding algorithm with roughly $d^{3.5}$ query complexity for a \emph{general convex body} containing a unit ball, which combines Algorithm~\ref{alg:iterative_rounding} with the annealing algorithm in~\cite{jia2021reducing}. Given a well-rounded uniform distribution over $T(\K\cap B_r(0))$ for some $r\geq 1$ and affine map $T:\R^{d\times d}\to \Rd$, Algorithm~\ref{alg:iterative_rounding} will find a new affine map $T'$ such that $T'(\K\cap B_r(0))$ is nearly isotropic with high probability. The annealing algorithm then moves to the uniform distribution over $T'(\K \cap B_{r(1+d^{-1/2})}(0))$, which can be shown to be well-rounded by \cite[Lemma 3.4]{jia2024reducingisotropyvolumekls}.
This annealing algorithm begins with a ball of radius $r=1$ (which is surely well-rounded) and repeats the procedure above until the radius reaches the diameter $D$ of $\K$. The total number of iterations for this annealing procedure is $d^{1/2}\log D$. Multiplying by the complexity of Algorithm~\ref{alg:iterative_rounding}, we find that the total complexity of the entire rounding algorithm is $\Otilde(d^{3.5}\log D)$.
\end{rem}

\subsection{Covariance estimation in unconstrained sampling}

In a similar vein, we state and prove a guarantee for covariance estimation when the target is an unconstrained distribution satisfying a Poincar\'e inequality. Briefly, we note that the analysis of $\ps$ (for which $\ino$ is the constrained equivalent) elegantly relates the mixing of the sampler in various divergences to the isoperimetric constants of the target $\pi$; in particular, there is a fundamental relationship between the Poincar\'e constant of $\pi$ and the mixing rate in $\chi^2$.

\paragraph{The $\ps$ for unconstrained distributions.}
Below, we recall the $\ps$ when sampling from an unconstrained distribution $\pi \propto \exp(-V)$.
$\ps$ iterates, for some step size $h>0$: (Forward) $y_{i+1}\sim\mc N(x_{i},hI_{d})$ and (Backward) $x_{i+1} \sim Q_h(\cdot\,|\,y_{i+1})$, where $Q_h$ has density
\[
    Q_h(\cdot\, |\,y_{i+1}) \propto \exp\bigl(-V(\cdot) - \frac{1}{2h}\, \norm{\cdot - y_{i+1}}^2 \bigr)\,.
\]
Similar to $\ino$, this procedure can be seen as Gibbs sampling. However, unlike $\ino$, the implementation of the reverse step is not straightforward and requires some additional effort. 

For their state-of-the-art result, the full methodology of~\cite{altschuler2024faster} uses a composite algorithm to (approximately) implement the backwards step. The composite algorithm consists of (1) the Metropolis adjusted Langevin algorithm (MALA), a high-accuracy sampler which performs well when given a warm start, and (2) the low-accuracy underdamped Langevin Monte Carlo (ULMC) sampler in order to generate that warm start. We do not explicitly give the construction of this proposal in this work, and invite the reader to peruse~\cite{altschuler2024faster} for additional details.

An important detail is that this only generates \emph{approximate} samples from the distribution $Q_h$ of the backwards step, with some chosen error tolerance $\varrho$ so that the final output is close to $Q_h$. We refer to the composition of the exact forward kernel with the inexact reverse kernel as $\hat P_{h, \varrho}$. The overall methodology is summarized in Algorithm~\ref{alg:unconstrained}.
\begin{algorithm}
\hspace*{\algorithmicindent} \textbf{Input:} $\pi \propto \exp(-V) \in \mc P_2(\R^{d})$ such that $V$ is $\beta$-smooth and $C_{\PI}(\pi) < \infty$, target error $\varepsilon > 0$.

\hspace*{\algorithmicindent} \textbf{Output:} $\ensuremath{(\hat \mu, \hat\Sigma)}$.

\begin{algorithmic}[1]

\STATE Let $K = \frac{c_K d \phi}{\varepsilon^2} \log^2 \frac{d\phi}{\varepsilon} \log^4 d$, $h = \frac{1}{2\beta}$, $n_0 = c_{n_0} \kappa (d \vee \beta) \log(\kappa d)$, $n = c_n \kappa$, $\varrho = \frac{c_{\varrho} (Kn + n_0)}{d}$, where $c_K, c_{n_0}, c_n, c_{\varrho}$ are all positive universal constants, $\Sigma = \cov(\pi), \phi \deq \frac{\tr \Sigma + C_{\PI}(\pi)}{\tr \Sigma}$, $\kappa \deq \beta C_{\PI}(\pi)$.

\STATE Obtain $X_0 \sim \mc N(0, \beta^{-1}I_d)$.

\FOR{$j \in [K]$} 

\STATE Draw $X_j \sim \delta_{X_{j-1}}\hat P_{h, \varrho}^n$.

\ENDFOR

\STATE Compute the sample mean and covariance $\hat \mu \gets \frac{1}{K} \sum_{j=1}^K X_j$, $\hat \Sigma \gets \frac{1}{K} \sum_{j=1}^K (X_j - \hat \mu)^{\otimes 2}$.

\end{algorithmic}\caption{Covariance estimation in the unconstrained setting \label{alg:unconstrained}}
\end{algorithm}

Finally, we note that the approximate kernels used at different steps are allowed to be different, and the user is free to choose the error tolerance at each step so that their final guarantee is suitably strong. For notational simplicity, we assume that the error tolerances do not differ from step to step.

\paragraph{Results.}
In this setting, we will primarily operate with the following standard assumptions.
\begin{assumption}\label{as:smoothness}
    The target distribution $\pi \propto \exp(-V)$ satisfies \eqref{eq:poincare} and $V$ is $\beta$-smooth; 
    \[
        \norm{\nabla V(x) - \nabla V(y)} \leq \beta \norm{x-y} \text{ for all } x, y \in \Rd\,.
    \]
\end{assumption}
For instance, when $\pi \propto \exp(-\sqrt{1+\norm{x}^2})$, we have by~\cite{bobkov2003spectral} that $C_{\PI}(\pi) = \mc O(d)$. The perturbation principle of Holley and Stroock~\cite{holley1986logarithmic} also states that for $\tilde \pi \propto \exp(-V + f)$, where $\norm{f}_\infty < \infty$ and $V$ is everywhere finite, a Poincar\'e inequality continues to hold for $\tilde \pi$ with constant depending on $\norm{f}_\infty$. In the sequel, we will denote the ``condition number'' by $\kappa \deq C_{\PI}(\pi) \beta$, which recovers the standard condition number $\beta/\alpha$ when $\pi$ is $\alpha$-strongly log-concave. 
\begin{lem}[{\cite{chen2022improved}, Theorem 4}]\label{lem:unconstrained-contraction}
    Suppose $\pi$ satisfies Assumption~\ref{as:smoothness}. Let $P_h$ be the Markov kernel corresponding to $\ps$ with stationary measure $\pi \in \mc P(\Rd)$ and step size $h$. Then, $P_h$ satisfies, for any initial measure $\mu \in \mc P(\Rd)$
    \[
        \chi^2(\mu P_h\mmid \pi) \leq \frac{\chi^2(\mu \mmid \pi)}{(1+h/C_{\PI}(\pi))^{2}}\,.
    \]
\end{lem}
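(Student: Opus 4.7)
The plan is to decompose a single proximal step as $P_h = F_h R_h$, where $F_h(\cdot\mid x) = \mc N(x, hI_d)$ is the forward Gaussian-convolution kernel and $R_h(\cdot \mid y) = Q_h(\cdot \mid y)$ is the backward restricted-Gaussian kernel. These two kernels are glued together by the joint $\pi^{XY}(x,y) \propto \pi(x)\,\exp(-\norm{y-x}^2/(2h))$, which has marginals $\pi$ and $\pi^{Y} \deq \pi * \mc N(0, hI_d)$, and whose $X\mid Y$ conditional is exactly $R_h$. The goal is to attribute the entire contraction factor to $F_h$, and to treat $R_h$ as a nonexpansion via the $\chi^2$ data-processing inequality.

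For the forward step, I observe that $\mu F_h = \mu * \mc N(0, hI_d)$ and $\pi F_h = \pi^{Y}$ are the time-$h$ images of $\mu$ and $\pi$ under the heat semigroup. The plan is to compare them along the \emph{simultaneous heat flow} $(\mu_t,\pi_t)_{t\in[0,h]}$ initialized at $(\mu,\pi)$. A standard Fokker-Planck computation yields
\[
\frac{d}{dt}\,\chi^{2}(\mu_t \mmid \pi_t) = -2\,\E_{\pi_t}\bbrack{\bnorm{\nabla(\mu_t/\pi_t)}^{2}} = -2\,\FI(\mu_t \mmid \pi_t)\,.
\]
Stability of Poincar\'e constants under Gaussian convolution (a Brascamp-Lieb-type lemma) then gives $\cpi(\pi_t) \leq \cpi(\pi) + t$, so applying Poincar\'e to $\mu_t/\pi_t$ produces $\FI(\mu_t \mmid \pi_t) \geq (\cpi(\pi)+t)^{-1}\,\chi^2(\mu_t \mmid \pi_t)$. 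Integrating the resulting Gr\"onwall-type ODE on $[0,h]$ then gives
\[
\chi^{2}(\mu F_h \mmid \pi^{Y}) \leq \chi^{2}(\mu \mmid \pi)\,\exp\Bpar{-2\int_{0}^{h}\frac{ds}{\cpi(\pi)+s}} = \frac{\chi^{2}(\mu \mmid \pi)}{(1+h/\cpi(\pi))^{2}}\,.
\]

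For the backward step, I will invoke the $\chi^{2}$ data-processing inequality: for any Markov kernel $K$, $\chi^{2}(\alpha K \mmid \beta K) \leq \chi^{2}(\alpha \mmid \beta)$. Applied to $K = R_h$ together with the identity $\pi^{Y} R_h = \pi$ (which follows directly from the construction of $\pi^{XY}$), this yields $\chi^{2}(\mu P_h \mmid \pi) \leq \chi^{2}(\mu F_h \mmid \pi^{Y})$, and chaining with the forward bound finishes the proof. The main technical hurdle is making the heat-flow $\chi^2$-derivative identity rigorous (requiring mild regularity of $\mu$, which one recovers by a standard approximation argument) and correctly invoking the Poincar\'e convolution-stability lemma; the remaining ODE integration and data-processing step are entirely routine. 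It is worth noting that the $\beta$-smoothness of $V$ from Assumption~\ref{as:smoothness} plays \emph{no} role in the contraction itself -- it enters downstream in controlling the quality of implementable approximations $\hat P_{h,\varrho}$ of the idealized kernel $P_h$.
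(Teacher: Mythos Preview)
The paper does not give its own proof of this lemma: it is quoted verbatim as \cite{chen2022improved}, Theorem~4, and used as a black box. Your outline is correct and is precisely the argument of that reference---the forward/backward decomposition $P_h = F_h R_h$, the simultaneous heat-flow computation $\tfrac{d}{dt}\chi^2(\mu_t\mmid\pi_t) = -2\,\FI(\mu_t\mmid\pi_t)$, the convolution stability $\cpi(\pi_t)\le\cpi(\pi)+t$, the Gr\"onwall integration yielding the $(1+h/\cpi(\pi))^{-2}$ factor, and the data-processing inequality for the backward step---so there is nothing to contrast. Your remark that $\beta$-smoothness is irrelevant to the contraction (and only enters when controlling the inexact kernel $\hat P_{h,\varrho}$) is also accurate.
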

In the sequel, we will also impose the following standard oracle model.
\begin{assumption}\label{as:prox-oracle}
    Assume that we have access to oracles for $V$, $\nabla V$, and also that we have access to the proximal oracle with step size $h = \nicefrac{1}{2\beta}$ for $V$, which given a point $y \in \R^d$ returns $\argmin_{x \in \R^d} \{V(x)  +\frac{1}{2h} \norm{x-y}^2\}$.
\end{assumption}
\begin{rem}
    The proximal oracle is not strictly necessary, and can be removed by following the same techniques as~\cite{altschuler2024faster}, at the possible cost of additional polylogarithms in the bound.
\end{rem}

In this section, when we speak about expected query complexity, we are referring to the sum total of queries either to oracles for $V, \nabla V$ or to the proximal oracle given above. Hereafter we will also suppress the subscripts for $h, \varrho$ in the kernel, as these remain fixed throughout the algorithm. The query complexity guarantees from~\cite{altschuler2024faster} are summarized below.
\begin{lem}[{Adapted from~\cite[Theorem D.1]{altschuler2024faster}}]\label{lem:alg-iid}
    In the setting of Lemma~\ref{lem:unconstrained-contraction}, under Assumptions~\ref{as:smoothness} and~\ref{as:prox-oracle}, there exists an algorithm which, given any initial point $x \in \R^d$, returns a point $z \sim \delta_x \hat P$, with $\KL(\delta_x \hat P \mmid \delta_x P) \leq \varrho$ for any $\varrho \in (0, \nicefrac{1}{2}]$. Its expected query complexity is $N = \Otilde\bigl(d^{1/2} \log^3 \frac{1}{\varrho}\bigr)$.
\end{lem}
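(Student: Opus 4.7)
The strategy is to exploit the structure of the proximal sampler's kernel. Recall that $\delta_x P$ is the law of $z$ obtained by the composition of the forward step $y \sim \mc N(x, hI_d)$ and the backward step $z \sim Q_h(\cdot \mid y) \propto \exp(-V(\cdot) - \frac{1}{2h}\,\norm{\cdot - y}^2)$. The forward step costs no oracle queries and can be sampled exactly, so the task reduces to producing an inexact kernel $\hat Q$ for the backwards step. By the chain rule for KL divergence applied to the joint law of $(y,z)$, if $\E_{y\sim \mc N(x, hI_d)}\,\KL(\hat Q(\cdot\mid y)\mmid Q_h(\cdot \mid y)) \leq \varrho$, then $\KL(\delta_x \hat P\mmid \delta_x P) \leq \varrho$, so it suffices to build a sampler for $Q_h(\cdot\mid y)$ whose error is $\varrho$ in KL in expectation over $y$.

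A key observation is that with $h = \nicefrac{1}{2\beta}$, the potential $V(\cdot) + \frac{1}{2h}\,\norm{\cdot - y}^2$ has Hessian eigenvalues in $[\beta, 3\beta]$, so $Q_h(\cdot \mid y)$ is $\beta$-strongly log-concave and $3\beta$-smooth, i.e., it has condition number at most $3$. This well-conditioned target is exactly where the composite ULMC$+$MALA scheme of~\cite{altschuler2024faster} shines: the dimension-dependence of the per-step query cost collapses to $d^{1/2}$ rather than the standard $d$. I would initialize the scheme at the proximal point $x^\star(y) \deq \argmin_{x}\{V(x) + \frac{1}{2h}\,\norm{x-y}^2\}$, which is accessible under Assumption~\ref{as:prox-oracle} and is a controlled starting point in the sense that $\norm{x^\star(y) - \text{mode of }Q_h}$ admits a moment bound in terms of $\norm{x}$ and $hd$.

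I would then invoke the composite algorithm in two stages. Stage one runs ULMC from $\delta_{x^\star(y)}$ for $\Otilde(\sqrt{d}\log(1/\varrho))$ iterations to obtain a point whose law is $W$-warm against $Q_h(\cdot\mid y)$, with $W = \mrm{poly}(\log(1/\varrho))$; stage two runs MALA from that warm start for $\Otilde(\sqrt{d}\,\log(W/\varrho)) = \Otilde(\sqrt{d}\log^2(1/\varrho))$ iterations to reach KL accuracy $\varrho$. The per-iteration cost of both stages is a constant number of oracle calls, so summing gives the stated $\Otilde(d^{1/2}\log^3(1/\varrho))$ expected query complexity (one power of $\log$ each from ULMC's warm-start production, MALA's iteration count from a logarithmically warm start, and an additional $\log$ absorbed into $\Otilde$ for discretization overhead and rejection-sampling within MALA).

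The main obstacle is bookkeeping rather than a new idea. One must (i) translate the guarantees of~\cite{altschuler2024faster}, which are stated for a fixed well-conditioned target, into a bound in expectation over $y \sim \mc N(x, hI_d)$, using the moment control on $\norm{y-x^\star(y)}$ and the Lipschitzness of $\nabla V$ to ensure uniform-in-$y$ warmness at the transition between ULMC and MALA; and (ii) verify that the rejection-sampling randomness in MALA affects only the expected number of oracle calls and not the distributional accuracy, so that the final KL bound of $\varrho$ transfers through the chain rule to $\KL(\delta_x\hat P\mmid \delta_x P)$. Both reduce to standard moment and tail estimates and do not require new machinery beyond what is already used in the cited work.
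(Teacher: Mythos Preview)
Your proposal is correct and follows essentially the same approach as the paper: both use the chain rule for $\KL$ to reduce the bound on $\KL(\delta_x \hat P \mmid \delta_x P)$ to the backward step (the forward step being exact), observe that with $h = \nicefrac{1}{2\beta}$ the backward target $Q_h(\cdot\mid y)$ has $\Theta(1)$ condition number, and then invoke the composite ULMC$+$MALA guarantee of \cite[Theorem D.1]{altschuler2024faster}. The paper's proof is terser---it simply cites the relevant analysis in \cite{altschuler2024faster} rather than unpacking the two-stage complexity bookkeeping---but the underlying argument is the same.
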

\begin{rem}
Due to its reliance on an inexact implementation of the proximal sampler, we do not know if the algorithm can be written in the form of a kernel possessing a spectral gap. As a result, we cannot apply Theorem~\ref{thm:mul-cov-Poincare} directly to this algorithm. Nonetheless, the error guarantees are sufficient to retain the concentration results developed in this work.
\end{rem}
We can now convert this error into a total variation bound on the entire chain, as below.
\begin{lem}\label{lem:tv-unconstrained-chain}
    In the setting of Lemma~\ref{lem:alg-iid}, set $\nu = \mu_0 P^{n_0}$ and $\hat \nu = \mu_0 \hat P^{n_0}$ for any initialization $\mu_0$ and $n_0 \in \mathbb N$, where $\varrho \asymp \nicefrac{\delta}{(K\kappa +n_0)}$ is chosen in Lemma~\ref{lem:alg-iid}, and $K \in \mathbb N$. For $n = \O(\kappa)$, let $\nu_{1:K}, \hat \nu_{1:K}$ be the joint laws of $K$ iterates drawn from Markov chains with initial distributions $\nu, \hat \nu$ and kernels $P^n, \hat P^n$ respectively. Then, for a given $\delta \in (0, \nicefrac{1}{2}]$, we can guarantee that $\norm{\nu_{1:K}-\hat \nu_{1:K}}_{\msf{TV}} \leq \delta$. The expected query complexity of implementing the chain corresponding to $\hat \nu_{1:K}$ is
    \[
        N = \Otilde\bigl(n_0 d^{1/2} \log^3 \frac{K}{\delta}\bigr) + \Otilde\bigl(K\kappa d^{1/2} \log^3 \frac{n_0}{\delta} \bigr)\,.
    \]
\end{lem}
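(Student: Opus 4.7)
The plan is to work at the level of the joint law of the full length-$M$ single-kernel chain with $M \deq n_0 + Kn$, apply the chain rule for relative entropy, and convert to total variation via Pinsker's inequality. Write $\Lambda$ and $\hat\Lambda$ for the joint distributions over all $M$ single-step outcomes when the chain is driven from $\mu_0$ by the exact kernel $P$ and by the inexact kernel $\hat P$, respectively. Both $\nu_{1:K}$ and $\hat\nu_{1:K}$ are pushforwards of $\Lambda$ and $\hat\Lambda$ under the projection onto the subsampled coordinates $n_0, n_0 + n, \ldots, n_0 + Kn$, so by the data processing inequality it suffices to bound $\norm{\hat\Lambda - \Lambda}_{\msf{TV}}$.

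I would then invoke the chain rule for relative entropy of Markov sequences,
\begin{align*}
    \KL(\hat\Lambda \mmid \Lambda) = \sum_{i=1}^{M} \E_{x \sim \hat\Lambda_{i-1}}\bbrack{\KL\bpar{\hat P(x,\cdot) \mmid P(x,\cdot)}} \leq M\varrho\,,
\end{align*}
where the per-step bound is supplied by Lemma~\ref{lem:alg-iid}. Pinsker's inequality then yields $\norm{\hat\Lambda - \Lambda}_{\msf{TV}} \leq \sqrt{M\varrho/2}$, which is at most $\delta$ as soon as $\varrho \lesssim \delta^2/M \asymp \delta^2/(n_0 + K\kappa)$ using $n = \O(\kappa)$. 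The stated choice of $\varrho$ (up to swapping $\delta$ for $\delta^2$, which is inconsequential once absorbed into the $\Otilde$ below since $\log(1/\varrho)$ has the same order in either case) then secures the desired bound.

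For the expected query complexity, Lemma~\ref{lem:alg-iid} gives a per-invocation cost of $\Otilde(d^{1/2} \log^3(1/\varrho))$. As the preceding remark allows distinct tolerances at distinct steps, I would assign tolerance $\asymp \delta^2/n_0$ to each of the $n_0$ burn-in steps and $\asymp \delta^2/(Kn)$ to each of the $Kn$ sampling steps; the chain rule argument above, applied with the sum of the individual tolerances in place of $M\varrho$, still keeps the aggregate KL within $\O(\delta^2)$. Summing the per-step complexities yields
\begin{align*}
    N = \Otilde\bpar{n_0\, d^{1/2} \log^3 \tfrac{n_0}{\delta}} + \Otilde\bpar{Kn\, d^{1/2} \log^3 \tfrac{Kn}{\delta}}\,,
\end{align*}
which, after substituting $n = \O(\kappa)$ and absorbing polylogarithmic factors, matches the displayed bound. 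The only subtlety is organizational bookkeeping -- being explicit that the exact kernel $P$ in the chain-rule expansion is applied along trajectories drawn from the approximate kernel, and that the tolerances assigned at different phases produce the two-term split in the query budget; no new probabilistic machinery beyond Lemma~\ref{lem:alg-iid} and standard information-theoretic identities is required.
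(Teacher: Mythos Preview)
Your proposal is correct and follows essentially the same route as the paper: both apply the chain rule for $\KL$ to the Markov sequence (conditioning on the \emph{approximate} trajectory), invoke the per-step bound from Lemma~\ref{lem:alg-iid}, and finish with Pinsker's inequality; your version is slightly more streamlined in treating the full length-$M$ chain at once and then projecting, whereas the paper first handles $m$-step blocks before assembling them, but the substance is identical. Your observation about $\delta$ versus $\delta^2$ in the choice of $\varrho$ is also consistent with the paper's own derivation, which ends up bounding the KL by $2\delta^2$ and hence implicitly needs $\varrho \asymp \delta^2/(Kn+n_0)$---the discrepancy with the stated $\varrho$ is harmless inside the $\Otilde$.
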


We will take $\delta \asymp \nicefrac{1}{d}$. Lemma~\ref{lem:unconstrained-contraction} suggests that we set $n_0 = \kappa \log \chi^2(\mu_0 \mmid \pi)$, and so we need $\Otilde(\kappa d^{1/2} \log \chi^2(\mu_0 \mmid \pi) \polylog K)$ queries to obtain a sample whose law is $\O(1)$-close to $\pi$ in $\chi^2$-divergence. Under standard assumptions (Lemma~\ref{lem:initialization}), $\log \chi^2(\mu_0 \mmid \pi) \asymp \Otilde(d \vee \beta)$, so this complexity would be $\Otilde(\kappa d^{1/2}(d \vee \beta))$. For the requisite $K$ \emph{independent} samples, we repeat the whole procedure from scratch, expecting $\Otilde(\kappa d^{1/2}(d \vee \beta) \polylog (K+n_0))$ queries per sample to construct an accurate covariance estimate.

In contrast, after using $\Otilde(\kappa d^{1/2}(d \vee \beta) \polylog K)$ queries to obtain a warm sample, an estimator based on Markovian iterates needs $\Otilde(\kappa d^{1/2} \polylog (K + n_0))$ for each further sample.
\begin{thm}[Covariance estimation in unconstrained sampling]\label{thm:unconstrained-covar}
    In addition to Assumptions~\ref{as:smoothness} and~\ref{as:prox-oracle}, assume that $V(0) - \min V \lesssim d$, $\E_\pi \norm{\cdot} = \poly(\kappa, d)$, and $\nabla V(0) = 0$. 
    We then have with probability $1-\nicefrac{\fp}{d}$ that the matrix obtained by Algorithm~\ref{alg:unconstrained} satisfies
    \[
        \norm{\hat \Sigma - \Sigma}_{\op} \leq \veps\norm{\Sigma}_{\op}\,.
    \]
    In particular, the total query complexity is bounded in expectation as
    \[
        N = \Otilde\bigl(\max \bigl\{\kappa d^{1/2} (d \vee \beta) \log^3 \frac{\phi}{\varepsilon}, \frac{\kappa d^{3/2} \phi}{\varepsilon^2} \bigr\}\bigr)\,.
    \]
\end{thm}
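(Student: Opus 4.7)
The plan is to apply Theorem~\ref{thm:mul-cov-Poincare} to the exact $\ps$ kernel $P^n$ and then transfer the conclusion to the implementable approximate kernel $\hat{P}^n$ via the total-variation coupling of Lemma~\ref{lem:tv-unconstrained-chain}. I would proceed in four steps.

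First, I verify that $P^n$ satisfies Assumption~\ref{as:markov} with spectral gap at least $0.99$. Reversibility is inherited from the Gibbs construction of $\ps$. For the spectral gap, Lemma~\ref{lem:unconstrained-contraction} together with reversibility gives a per-step contraction factor $(1+h/\cpi(\pi))^{-1}$ on mean-zero $L^2(\pi)$ functions, and $n = c_n \kappa = c_n \beta \cpi(\pi)$ iterations with $h = \nicefrac{1}{(2\beta)}$ amplify this to a spectral gap of at least $0.99$ for a sufficiently large universal constant $c_n$.

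Second, I establish that after $n_0$ burn-in steps the initial distribution is essentially stationary in $\chi^2$. Under the assumptions $V(0) - \min V \lesssim d$, $\nabla V(0) = 0$, and $\beta$-smoothness, a standard Gaussian-initialization computation (cf.\ Lemma~\ref{lem:initialization}) yields $\log \chi^2(\mu_0 \mmid \pi) \lesssim d \vee \beta$ for $\mu_0 = \mc N(0, \beta^{-1}I_d)$. Applying Lemma~\ref{lem:unconstrained-contraction} with the chosen $n_0 = c_{n_0}\kappa (d\vee\beta)\log(\kappa d)$ shows that $\nu \deq \mu_0 P^{n_0}$ satisfies $\chi^2(\nu \mmid \pi) \leq \nicefrac{1}{d}$ for $c_{n_0}$ large enough.

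Third, I apply Theorem~\ref{thm:mul-cov-Poincare} to the exact chain driven by $P^n$ from $\nu$. Per the remark after Assumption~\ref{as:markov}, starting from $\nu$ instead of $\pi$ only costs a multiplicative factor $1 + \chi^2(\nu \mmid \pi) \leq 2$ in the failure probability. The algorithm's choice $K = c_K d \phi \veps^{-2} \log^2(d\phi/\veps) \log^4 d$ exceeds the requirement of Theorem~\ref{thm:mul-cov-Poincare} after absorbing $\tr\Sigma / \norm{\Sigma} \leq d$ into the logarithms and using $\E_\pi \norm{\cdot} = \poly(\kappa, d)$ to convert $\log(d(\cpi(\pi) + \tr \Sigma)/(\veps \norm{\Sigma}))$ into $\polylog(d\phi/\veps)$. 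Hence the exact-chain estimator $\overline{\Sigma}$ satisfies $\norm{\overline{\Sigma}-\Sigma} \leq \veps \norm{\Sigma}$ with probability at least $1 - 2\fp/d$. Now invoking Lemma~\ref{lem:tv-unconstrained-chain} with $\delta = \nicefrac{1}{d}$ and $\varrho$ set accordingly, the joint law of the approximate chain $\hat \nu_{1:K}$ is $(\nicefrac{1}{d})$-close in TV to that of the exact chain, so by a maximal coupling the same covariance bound holds for the algorithm's output $\hat \Sigma$ with probability at least $1 - \fp/d$ (after relabeling the constant).

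Fourth, the query complexity follows directly from Lemma~\ref{lem:tv-unconstrained-chain}: substituting $n_0, K, n$ and noting $\log^3(1/\varrho) = \polylog(K\kappa, n_0, d)$ yields the claimed bound $\Otilde(\max\{\kappa d^{1/2}(d\vee\beta) \log^3(\phi/\veps), \kappa d^{3/2}\phi/\veps^2\})$. I expect the main technical obstacle to be the careful bookkeeping of three error sources—the chi-squared slack from the burn-in, the concentration failure of Theorem~\ref{thm:mul-cov-Poincare}, and the TV cost from the approximate kernel—and verifying that the algorithm's parameter $\varrho$ is tuned finely enough that the cost of implementing each approximate step remains $\Otilde(d^{1/2})$ while keeping the joint-chain TV error below $\nicefrac{1}{d}$.
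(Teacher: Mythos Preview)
Your proposal is correct and follows essentially the same approach as the paper: establish the spectral gap of $P^n$ via Lemma~\ref{lem:unconstrained-contraction}, use the burn-in with Lemma~\ref{lem:initialization} to make $\chi^2(\nu\mmid\pi)=\mc O(1)$, apply Theorem~\ref{thm:mul-cov-Poincare} with the change-of-measure factor to the exact chain, and then transfer to the approximate chain via the TV bound of Lemma~\ref{lem:tv-unconstrained-chain}. One minor remark: the assumption $\E_\pi\norm{\cdot}=\poly(\kappa,d)$ is actually needed in step~2 (it is the $\mf m$ in Lemma~\ref{lem:initialization}) rather than in step~3, where the logarithmic factor in Theorem~\ref{thm:mul-cov-Poincare} is controlled simply by $\tr\Sigma/\norm{\Sigma}\le d$.
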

\begin{rem}\label{rmk:unconstrained-iid}
It can be seen that using $d$ i.i.d. samples would require $\Otilde(\frac{\kappa d^{3/2}(d \vee \beta) \phi}{\varepsilon^2})$ queries in expectation. This is always worse than the rate above by a factor of at least $d$. Note that Theorem~\ref{thm:unconstrained-covar} also implies a concentration result for the mean, and a similar proof will yield guarantees resembling that of Theorem~\ref{thm:add-cov-Poincare}, albeit with additional terms in the query complexity.
\end{rem}

\section{Details for the main results}\label{app:proofs-main}

\subsection{Matrix Bernstein inequality under spectral gap of a Markov chain}\label{app:mtx-Bern}

\begin{proof}[{Proof of Corollary~\ref{cor:vec-bern-spectral}}]
We deduce this from Theorem~\ref{thm:dependent-bernstein}. Consider the matrix 
\[
F(X_{i}):=\left[\begin{array}{cc}
0 & X_{i}^{\T}\\
X_{i} & 0_{d\times d}
\end{array}\right]\,.
\]
Clearly, $\E[F(X_{j})]=0$. Next, its operator norm is bounded as
follows: for $(v,w)\in\R\times\R^{d}$ with $\norm{(v,w)}=1$, 
\[
\norm{F(X_{j})}=2\sup_{\norm{(v,w)}=1}v\,X_{j}^{\T}w\leq2\sup|v|\norm w\norm{X_{j}}\leq\norm{X_{j}}\leq\mc M\,,
\]
and the supremum is achieved by $v=1/\sqrt{2}$ and $w=X_{j}/(\sqrt{2}\norm{X_{j}})$.
Also, observe that 
\[
\E[F(X_{j})^{2}]=\left[\begin{array}{cc}
\E[\norm{X_{j}}^{2}] & 0\\
0 & \E[X_{j}X_{j}^{\T}]
\end{array}\right]\preceq\E[\norm{X_{j}}^{2}]\cdot I_{d+1}\,,
\]
where the inequality follows from $\E[X_{j}X_{j}^{\T}]\preceq\E[\norm{X_{j}}^{2}]\cdot I_{d}$.
Therefore, $\norm{\E[F(X_{j})^{2}}\leq\E[\norm{X_{j}}^{2}]\leq\mc V$.

Noting that the operator norm of 
\[
\sum_{j=1}^{n}F(X_{j})=\left[\begin{array}{cc}
0 & \sum X_{j}^{\T}\\
\sum X_{j} & 0
\end{array}\right]
\]
is $\norm{\sum X_{j}}$, and using Theorem~\ref{thm:dependent-bernstein},
\[
\P\Bpar{\Bnorm{\sum_{j=1}^{n}X_{j}}\geq t}=\P\Bpar{\Bnorm{\sum_{j=1}^{n}F(X_{j})}\geq t}\leq(d+1)^{2-\pi/4}\exp\Bpar{\frac{-t^{2}/(32/\pi^{2})}{\alpha(\lda)\,\sigma^{2}+\beta(\lda)\,\mc Mt}}\,,
\]
which completes the proof. 
\end{proof}

The following lemma allows us to perform most of our calculations
at stationarity, paying only a small overhead. 
\begin{lem}[Change of measure]\label{lem:change-measure} 
Suppose a Markov chain with kernel $P$ starts from $\nu$ instead of the stationary measure $\pi$.
Then, the probability of an event has a multiplicative factor of $1+\chi^{2}(\nu\mmid\pi)$.
\end{lem}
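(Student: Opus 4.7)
The plan is to exploit the Markov property to identify the Radon-Nikodym derivative between the two path measures, and then apply Cauchy-Schwarz. Let $\mb P_\nu^N$ denote the joint law of $(X_1,\dotsc,X_N)$ under the chain with kernel $P$ initialized at $\nu$, and similarly for $\mb P_\pi^N$ with initialization $\pi$. Both measures factor along the chain as $d\rho(x_1) \prod_{i=2}^N P(dx_i \mmid x_{i-1})$, with $\rho = \nu$ or $\pi$. The transition factors cancel in the ratio, so the path-measure Radon-Nikodym derivative reduces to the initial-marginal ratio,
\[
\frac{d\mb P_\nu^N}{d\mb P_\pi^N}(x_1,\dotsc,x_N) \;=\; \frac{d\nu}{d\pi}(x_1)\,,
\]
assuming $\nu \ll \pi$ (otherwise $\chi^2(\nu\mmid\pi) = \infty$ and the conclusion is vacuous).

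Next, for any measurable event $A$ in the path space I would write $\mb P_\nu^N(A) = \E_{\mb P_\pi^N}\bbrack{\tfrac{d\nu}{d\pi}(X_1)\ind_A}$ and apply Cauchy-Schwarz with respect to $\mb P_\pi^N$:
\[
\mb P_\nu^N(A) \;\leq\; \Bpar{\E_\pi\bbrack{\bpar{\tfrac{d\nu}{d\pi}}^2}}^{1/2}\, \mb P_\pi^N(A)^{1/2} \;=\; \sqrt{\bpar{1+\chi^2(\nu\mmid\pi)}\, \mb P_\pi^N(A)}\,,
\]
where the final equality is the definition $\chi^2(\nu\mmid\pi) = \E_\pi[(d\nu/d\pi)^2] - 1$ from Definition~\ref{def:p-dist}. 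In particular, if $\mb P_\pi^N(A) \leq \delta$, then $\mb P_\nu^N(A) \leq \sqrt{(1+\chi^2(\nu\mmid\pi))\,\delta}$, which is the ``multiplicative factor of $1+\chi^2(\nu\mmid\pi)$'' asserted in the statement (after squaring both sides, the failure probability inflates by at most this factor).

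There is no real obstacle here — the only subtlety is recognizing that the joint path measure differs from the stationary joint path measure only through its first-coordinate marginal, which makes the likelihood ratio independent of the transitions. The argument works verbatim for the initial iterate being indexed as $X_0$ or $X_1$, and applies without change to the applications (e.g.\ Algorithm~\ref{alg:iterative_rounding}) where the warm start satisfies $\chi^2(\pi_0 \mmid \pi) = \O(1)$, so that the Cauchy-Schwarz blowup is a harmless constant.
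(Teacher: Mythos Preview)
Your approach matches the paper's: both reduce the path-measure Radon--Nikodym derivative to the initial-marginal ratio $\tfrac{d\nu}{d\pi}(x_1)$ (you by direct cancellation of the transition factors, the paper by an inductive computation showing $\chi^2(\nu_{1:n}\mmid\pi_{1:n}) = \chi^2(\nu\mmid\pi)$), and then bound the event probability in terms of $\chi^2$.

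One remark on the final step. The paper asserts the linear bound $\nu_{1:n}(A) \le (1+\chi^2)\,\pi_{1:n}(A)$, which is in fact false in general (e.g.\ $\pi$ uniform on $[0,1]$, $\tfrac{d\nu}{d\pi}(x)=2x$, and $A=[1-\epsilon,1]$ give ratio $\approx 2 > \tfrac{4}{3} = 1+\chi^2$). Your Cauchy--Schwarz bound $\nu_{1:n}(A)\le \sqrt{(1+\chi^2)\,\pi_{1:n}(A)}$ is the rigorous one. Your closing attempt to recover the linear form by ``squaring both sides'' does not work --- squaring $\nu(A)$ has no probabilistic meaning --- but this is a defect of the lemma's vague statement rather than of your argument. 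In all the paper's applications the warm start satisfies $\chi^2=\O(1)$ (indeed $\tfrac{d\nu}{d\pi}\le 2$ pointwise, which \emph{does} give the linear bound directly), so either form suffices there.
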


\begin{proof}
Let $A$ be the bad event we want to bound. Let us denote $\pi_{1:n}:=\law(s_{1},\dotsc,s_{n})$ for $s_{1}\sim\pi$ and $\nu_{1:n}:=\law(s_{1},\dotsc,s_{n})$ for $s_{1}\sim\nu$, where $s_{i+1} \sim P(\cdot | s_i)$. Then, 
\[
\nu_{1:n}(A)=\int\ind_{A}\,\D\nu_{1:n}=\int\ind_{A}\frac{\D\nu_{1:n}}{\D\pi_{1:n}}\,\D\pi_{1:n}\leq\bpar{1+\chi^{2}(\nu_{1:n}\mmid\pi_{1:n})}\cdot\pi_{1:n}(A)\,.
\]
Note that 
\begin{align*}
1+\chi^{2}(\nu_{1:n}\mmid\pi_{1:n}) & =\int\Bpar{\frac{\nu_{1:n}}{\pi_{1:n}}}^{2}\D\pi_{1:n}=\int\int\Bpar{\frac{\nu_{1:n-1}}{\pi_{1:n-1}}}^{2}\Bpar{\frac{\nu_{n|-n}}{\pi_{n|-n}}}^{2}\,\D\pi_{n|-n}\D\pi_{1:n-1}\\
 & \underset{(i)}{=}\int\int\Bpar{\frac{\nu_{1:n-1}}{\pi_{1:n-1}}}^{2}\Bpar{\frac{\nu_{n|n-1}}{\pi_{n|n-1}}}^{2}\,\D\pi_{n|n-1}\D\pi_{1:n-1}\\
 &\underset{(ii)}{=}\int\Bpar{\frac{\nu_{1:n-1}}{\pi_{1:n-1}}}^{2}\D\pi_{1:n-1}\\
 &\underset{(iii)}{=}\int\Bpar{\frac{\nu_{1}}{\pi_{1}}}^{2}\D\pi_{1} =1+\chi^{2}(\nu_{1}\mmid\pi_{1})\,,
\end{align*}
where $(i)$ follows from $P$ being a Markov chain, $(ii)$ follows
from $\nu_{n|n-1}=\pi_{n|n-1}$, and $(iii)$ follows from induction.
\end{proof}

\subsection{Moments under Poincar\'e}

We can establish concentration of the target measure around its mean.
\begin{lem}\label{lem:tail-Poincare} 
Suppose that $\pi \in \mc P(\Rd)$ satisfies \eqref{eq:poincare}. Then, letting $\mu \in \R^d$ denote the mean of $\pi$, for all $t\geq1$ we have
\[
\P(\norm{X-\mu}\geq t\sqrt{\tr\Sigma})\leq\exp\Bpar{-(t-1)\,\bpar{\frac{\tr\Sigma}{\cpi(\pi)}}^{1/2}}\,.
\]
\end{lem}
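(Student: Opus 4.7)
The plan is to apply the sub-exponential Lipschitz concentration \eqref{eq:poincare-Lip-con} to the function $f(x) \deq \norm{x-\mu}$. First I would verify that $f$ is $1$-Lipschitz: this is immediate from the reverse triangle inequality, $|\norm{x-\mu}-\norm{y-\mu}| \leq \norm{x-y}$, so the gradient of $f$ has norm at most $1$ wherever it exists.

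Second, I would control the expectation $\E_\pi f$ in terms of the quantity $\sqrt{\tr\Sigma}$ that appears in the tail bound. By Jensen's inequality,
\[
\E_\pi f \;\leq\; \bpar{\E_\pi \norm{X-\mu}^2}^{1/2} \;=\; \bpar{\tr\Sigma}^{1/2}\,,
\]
using the identity $\E_\pi \norm{X-\mu}^2 = \tr\cov(\pi) = \tr\Sigma$.

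With these two facts, the proof reduces to a shift-and-plug-in: for $t \geq 1$,
\[
\{\,\norm{X-\mu} \geq t\sqrt{\tr\Sigma}\,\} \;\subseteq\; \{\,f - \E_\pi f \;\geq\; (t-1)\sqrt{\tr\Sigma}\,\}\,,
\]
since $t\sqrt{\tr\Sigma}-\E_\pi f \geq (t-1)\sqrt{\tr\Sigma}$. Applying \eqref{eq:poincare-Lip-con} with $s=(t-1)\sqrt{\tr\Sigma}$ then yields a tail of order $\exp\bpar{-(t-1)\sqrt{\tr\Sigma/C_{\PI}(\pi)}}$, up to the prefactor $3$.

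The only subtlety is reconciling the prefactor: \eqref{eq:poincare-Lip-con} comes with a constant $3$, whereas the claimed bound has constant $1$. I expect this to be the main technical point rather than a conceptual obstacle. I would address it in one of two standard ways: either invoke a sharper one-sided Poincar\'e-to-exponential estimate (optimizing the moment-generating function bound $\E_\pi e^{\lambda(f-\E_\pi f)} \leq (2+\lambda\sqrt{C_{\PI}})/(2-\lambda\sqrt{C_{\PI}})$ in $\lambda$ so as to trade a slightly weaker exponent for a prefactor of $1$), or observe that for $t$ near $1$ the claimed bound is close to $1$ and therefore trivial, so one can afford to slow the exponent by a constant to absorb $\log 3$. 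In either case the desired inequality follows with no further work.
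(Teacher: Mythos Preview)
Your proposal is correct and follows essentially the same approach as the paper: apply \eqref{eq:poincare-Lip-con} to the $1$-Lipschitz function $f(x)=\norm{x-\mu}$, bound $\E_\pi f\le\sqrt{\tr\Sigma}$ via Jensen, and shift. In fact you are more careful than the paper about the prefactor~$3$: the paper's own proof ends at the inequality $\P(\norm{X-\mu}\ge t+\sqrt{\tr\Sigma})\le 3\exp(-t/\sqrt{\cpi(\pi)})$ and declares ``which completes the proof'' without reconciling the constant, so your remark that the discrepancy is inessential (absorbable into the exponent or trivially true for $t$ near $1$) is a genuine, if minor, addition.
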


\begin{proof}
We note that $f(x)=\norm{x-\mu}$ is $1$-Lipschitz. Hence, it follows from the Lipschitz concentration property in \eqref{eq:poincare-Lip-con} that for all $t\geq0$,
\[
\P(\norm{X-\mu}\geq t+\E\norm{X-\mu})\leq3\exp\bpar{-\frac{t}{\sqrt{C_{\PI}(\pi)}}}\,.
\]
Since $\E\norm{X-\mu}\leq(\E[\norm{X-\mu}^{2})^{1/2}=\sqrt{\tr\Sigma}$,
we can deduce that 
\[
\P(\norm{X-\mu}\geq t+\sqrt{\tr\Sigma})\leq3\exp\bpar{-\frac{t}{\sqrt{C_{\PI}(\pi)}}}\,,
\]
which completes the proof.
\end{proof}
One can also bound the fourth-moment under the Poincar\'e inequality:
\begin{lem}\label{lem:4th-Poincare} 
Suppose that $\pi\in \mc P(\Rd)$ satisfies \eqref{eq:poincare}. Then,
\[
\E[\norm{X-\mu}^{4}]\leq(4\cpi(\pi)+\tr\Sigma)\tr\Sigma\,.
\]
\end{lem}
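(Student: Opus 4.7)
The plan is to apply the Poincar\'e inequality directly to the test function $f(x) = \norm{x-\mu}^2$, which is a standard trick for controlling fourth moments under \eqref{eq:poincare}. This function is smooth, so Definition~\ref{def:poincare} applies without issue; its gradient is $\nabla f(x) = 2(x-\mu)$, giving $\norm{\nabla f(x)}^2 = 4\norm{x-\mu}^2$, and therefore
\[
\E_\pi[\norm{\nabla f}^2] = 4\,\E_\pi[\norm{X-\mu}^2] = 4\tr\Sigma\,.
\]

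Next I would compute the two pieces of $\var_\pi f$. The mean is $\E_\pi f = \E_\pi[\norm{X-\mu}^2] = \tr\Sigma$, and the second moment is exactly the quantity of interest, $\E_\pi[f^2] = \E_\pi[\norm{X-\mu}^4]$. Hence
\[
\var_\pi f = \E_\pi[\norm{X-\mu}^4] - (\tr\Sigma)^2\,.
\]

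Plugging into \eqref{eq:poincare} gives
\[
\E_\pi[\norm{X-\mu}^4] - (\tr\Sigma)^2 \leq C_{\PI}(\pi) \cdot 4\tr\Sigma\,,
\]
and rearranging yields the claimed bound $\E[\norm{X-\mu}^4] \leq (4C_{\PI}(\pi) + \tr\Sigma)\tr\Sigma$. There is essentially no obstacle here beyond choosing the right test function; the only mild subtlety is ensuring $f \in L^2(\pi)$ so that $\var_\pi f$ is well-defined, which is guaranteed by the fourth-moment concentration implied by Lemma~\ref{lem:tail-Poincare} (or may simply be assumed as part of the statement being nontrivial).
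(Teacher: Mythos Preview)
Your proof is correct and is essentially identical to the paper's: the paper also applies \eqref{eq:poincare} to $f(x)=\norm{x-\mu}^2$ (after translating so $\mu=0$), uses $\nabla f=2(x-\mu)$ and $\E f=\tr\Sigma$, and rearranges to obtain the bound. The only cosmetic difference is that the paper centers first rather than carrying $\mu$ through the computation.
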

\begin{proof}
Let us assume that $\mu=0$ by translation. Taking $f(x):=\norm x^{2}$
in \eqref{eq:poincare}, we have from $\nabla f(x)=2x$ and $\E[\norm X^{2}]=\tr\Sigma$
that 
\[
\E[\norm X^{4}]\leq4\cpi(\pi)\,\E[\norm X^{2}]+(\E[\norm X^{2}])^{2}=\tr\Sigma\,(4\cpi(\pi)+\tr\Sigma)\,.\qedhere
\]
\end{proof}

\subsection{Proof details}
\subsubsection{Mean estimation}\label{app:mean-proof}

The proof of the mean estimation result closely mirrors that of the succeeding covariance estimation bound.

\begin{lem}\label{lem:mean-estimation} 
Under Assumption~\ref{as:markov}, if $\lambda\geq0.99$, then with probability at least $1 - \nicefrac{\fp}{d}$ we have that for any $\veps\in(0,d)$,
\[
\norm{\overline{X}-\mu}^{2}\leq\varepsilon\,,
\]
so long as $N\asymp\frac{\tr\Sigma+\cpi(\pi)}{\veps}\log^{2}\frac{d\max(d,\tr\Sigma)}{\varepsilon}\log^{2}d$. 
\end{lem}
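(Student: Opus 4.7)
The plan is to adapt the three-term truncation argument described for the covariance in \S\ref{sec:covar_est} to the vector-valued setting. Without loss of generality take $\mu=0$. Fix a truncation radius $R \asymp \sqrt{\tr\Sigma} + \sqrt{\cpi(\pi)}\,\log\tfrac{dN\,(\tr\Sigma\vee d)}{\varepsilon}$, let $B \deq \{x\in\Rd : \norm{x}\le R\}$, and decompose
\[
\overline{X}=\underbrace{\frac{1}{N}\sum_{i=1}^{N}\bpar{X_{i}\ind_{B}-\E_{\pi}[X_{i}\ind_{B}]}}_{\msf A_{1}}+\underbrace{\frac{1}{N}\sum_{i=1}^{N}X_{i}\ind_{B^{c}}}_{\msf A_{2}}-\underbrace{\E_{\pi}[X\ind_{B^{c}}]}_{\msf A_{3}}\,,
\]
using $\E_\pi[X\ind_B]=-\E_\pi[X\ind_{B^{c}}]$. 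It then suffices to show that each of $\msf A_{1},\msf A_{2},\msf A_{3}$ has norm at most $\sqrt{\varepsilon}/3$ on a common event of probability $\ge 1-\fp/d$.

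For $\msf A_1$, I would apply Corollary~\ref{cor:vec-bern-spectral} to the mean-zero vectors $Y_{i}\deq X_{i}\ind_{B}-\E_{\pi}[X_{i}\ind_{B}]$. These satisfy $\norm{Y_{i}}\le 2R\eqqcolon\mc M$ and $\E_{\pi}\norm{Y_{i}}^{2}\le\E_{\pi}\norm{X}^{2}=\tr\Sigma\eqqcolon\mc V$, so $\sigma^{2}=N\tr\Sigma$. Taking $t = N\sqrt{\varepsilon}/3$ in the Bernstein bound with $\lambda\ge 0.99$ yields an exponent of order $-\min(N\varepsilon/\tr\Sigma,\,N\sqrt{\varepsilon}/R)$, and forcing this below $-\log(\fp d^{3})$ produces the two constraints $N\gtrsim \tr\Sigma\log d/\varepsilon$ and $N\gtrsim R\log d/\sqrt\varepsilon$. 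For $\msf A_2$, Lemma~\ref{lem:tail-Poincare} applied at each marginal (all equal to $\pi$ under Assumption~\ref{as:markov}) together with a union bound gives $\Parg{\exists i:X_{i}\notin B}\lesssim N\exp\bigl(-(R-\sqrt{\tr\Sigma})/\sqrt{\cpi(\pi)}\bigr)$, which the choice of $R$ drives below $\fp/d$; on this event $\msf A_2=0$. For $\msf A_3$, Cauchy--Schwarz yields $\norm{\msf A_{3}}\le\sqrt{\tr\Sigma\cdot\P_{\pi}(B^{c})}$, and a second application of Lemma~\ref{lem:tail-Poincare} makes this exponentially small in $(R-\sqrt{\tr\Sigma})/\sqrt{\cpi(\pi)}$; the logarithmic slack $\log(\tr\Sigma/\varepsilon)$ built into $R$ is precisely what brings this below $\sqrt\varepsilon/3$.

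The principal technical nuisance is that $R$ depends on $N$ through a $\log N$ factor, so the linear constraint $N\gtrsim R\log d/\sqrt\varepsilon$ is self-referential. The standard remedy is to first bound $\log N$ by $\log N_{0}$ for a crude polynomial overestimate $N_{0}$ in $(d,\cpi(\pi),\tr\Sigma,\varepsilon^{-1})$, then verify that the resulting $N$ closes the loop. Combined with the AM-GM estimate $\sqrt{a/\varepsilon}\,\log b \le a(\log b)^{2}/\varepsilon + 1$, which converts the linear Bernstein constraint into one of the form $N\gtrsim(\tr\Sigma+\cpi(\pi))\log^{2}(\cdots)/\varepsilon$, this is precisely where the two extra logarithmic factors in the stated complexity appear. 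A final union bound over the three terms contributes only a constant to the failure probability, leaving the overall bound at $\fp/d$ as claimed.
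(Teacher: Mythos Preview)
Your proposal is correct and follows the same three-term truncation skeleton as the paper: split $\overline X$ into a bounded, centered piece handled by the vector Bernstein inequality (Corollary~\ref{cor:vec-bern-spectral}), a large-values piece, and the residual mean $\E_\pi[X\ind_{B^c}]$.

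The one substantive difference is how you control the large-values term $\msf A_2$. The paper does not drive $\msf A_2$ to zero via a union bound over the marginals; instead it applies Markov's inequality to $\|\msf A_2\|$ with parameter $s=8d$, bounding $\E\|\msf A_2\|\le \E_\pi[\|X\|\ind_{B^c}]$ by Cauchy--Schwarz and the Poincar\'e tail bound. This keeps the truncation radius independent of $N$ (the paper writes it as $t\sqrt{\tr\Sigma}$ with $t\asymp\sqrt{(\tr\Sigma+\cpi)/\tr\Sigma}\,\log\frac{d\max(d,\tr\Sigma)}{\varepsilon}$), so there is no self-referential constraint to close. Your union-bound route is equally valid and arguably more direct on $\msf A_2$, but you pay for it with the $\log N$ inside $R$ and the bootstrapping step you describe; the paper's Markov argument trades that nuisance for a slightly cruder $1/s$ tail, which is absorbed by taking $s\asymp d$. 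Both routes land on the same $N$ up to the contents of the logarithms. As a minor aside, your variance bound $\E_\pi\|Y_i\|^2\le \tr\Sigma$ for $\msf A_1$ is actually sharper than the paper's choice $\mc V_i\le 4t^2\tr\Sigma/N^2$ (which uses the truncation rather than the second moment), though this does not affect the final rate since the $\mc M$ term dominates.
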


\begin{proof}
By translation, we may assume that $\pi$ is centered (i.e., $\mu=\E_{\pi}X=0$).
Let $B_{t}:=\{x\in\Rd:\norm x\leq t\sqrt{\tr\Sigma}\}$, where $t>0$
are parameters to be determined, and we drop the subscript $t$ where
there is no confusion. Then, we have the following decomposition:
\[
\frac{1}{N}\sum X_{i}=\underbrace{\frac{1}{N}\sum(X_{i}\ind_{B}-\E[X_{i}\ind_{B}])}_{\eqqcolon\msf B_1}+\underbrace{\frac{1}{N}\sum X_{i}\ind_{B^{c}}}_{\eqqcolon\msf B_2}-\underbrace{\E[X\ind_{B^{c}}]}_{\eqqcolon\msf B_3}\,.
\]
\paragraph{Term $\protect\msf B_1$:}

As for this term, we essentially work with the truncated variables
$X_{i}\ind_{B}$. Let $F_{i}(X_{i})=\frac{1}{N}(X_{i}\ind_{B}-\E[X_{i}\ind_{B}])$
for some parameter $\lambda>0$. Clearly, $\E[F_{i}(X_{i})]=0$, and
\[
\norm{F_{i}(X_{i})}\leq\frac{1}{N}\norm{X_{i}\ind_{B}}+\frac{1}{N}\E\norm{X_{i}\ind_{B}}\leq\frac{2}{N}t\sqrt{\tr\Sigma}\,.
\]
For the variance, using the identity $(a+b)^{2}\leq2(a^{2}+b^{2})$
for $a,b\in\R$,
\begin{align*}
\E[\norm{F_{i}(X_{i})}^{2}] & \leq\frac{2}{N^{2}}\E[\norm{X_{i}\ind_{B}}^{2}]+\frac{2}{N^{2}}\norm{\E[X_{i}\ind_{B}]}^{2}\\
 & \leq\frac{4}{N^{2}}\E[\norm{X_{i}\ind_{B}}^{2}]\leq\frac{4t^{2}\tr\Sigma}{N^{2}}\,.
\end{align*}
Hence, we can substitute $\mc M=\frac{2t\sqrt{\tr\Sigma}}{N}$, $\mc V_{i}=\frac{4t^{2}\tr\Sigma}{N^{2}}$,
and $\sigma^{2}=\frac{4t^{2}\tr\Sigma}{N}$ into Corollary~\ref{cor:vec-bern-spectral},
finding that for $\alpha(\lda)=\frac{2-\lda}{\lambda}$ and $\beta(\lda)=\frac{8/\pi}{\lambda}$,
\[
\P\Bpar{\Bnorm{\frac{1}{N}\sum_{i=1}^{N}X_{i}\ind_{B}-\E[X_{i}\ind_{B}]}\geq\ell}\leq(d+1)^{2-\pi/4}\exp\Bpar{-\frac{\pi^{2}\ell^{2}/32}{\alpha(\lambda)\sigma^{2}+\beta(\lambda)\mc M\ell}}\,.
\]
Since $\alpha(\lambda)$ and $\beta(\lambda)$ are absolute constants, we have that with probability $1-\nicefrac{\fp}{d}$, 
\[
\Bnorm{\frac{1}{N}\sum_{i=1}^{N}X_{i}\ind_{B}-\E[X_{i}\ind_{B}]}\lesssim(\mc M+\sigma)\log d\lesssim\frac{t\sqrt{\tr\Sigma}\log d}{\sqrt{N}}\,.
\]

\paragraph{Terms $\protect\msf B_2$ and $\protect\msf B_3$:}

As for $\msf B_2$, we use Markov's inequality, 
\[
\P\Bpar{\Bnorm{\frac{1}{N}\sum X_{i}\ind_{B^{c}}}\geq s\,\E\Bbrack{\Bnorm{\frac{1}{N}\sum X_{i}\ind_{B^{c}}}}}\leq\frac{1}{s}\qquad\text{for any }s>0\,.
\]
Using Lemma~\ref{lem:tail-Poincare} for $t\geq2$ in the second last line below,
\begin{align}
\E\Bbrack{\Bnorm{\frac{1}{N}\sum X_{i}\ind_{B^{c}}}} & \leq\E\Bbrack{\frac{1}{N}\sum_{i=1}^{N}\norm{X_{i}}\ind_{B^{c}}}=\E_{\pi}[\norm X\ind_{B^{c}}]\nonumber \leq\sqrt{\E[\norm X^{2}]}\sqrt{\P(\norm X\geq t\sqrt{\tr\Sigma})}\nonumber \\
 & \leq3\sqrt{\tr\Sigma}\cdot\exp\Bpar{-\frac{(t-1)\sqrt{\tr\Sigma}}{\sqrt{\cpi(\pi)}}}\nonumber \leq3\sqrt{\tr\Sigma}\exp\Bpar{-\frac{t}{2}\,\bpar{\frac{\tr\Sigma}{\cpi(\pi)}}^{1/2}}\,.\label{eq:poin-bound-BC}
\end{align}
This bounds the norm of Term $\msf B_3$ as well by
the same argument, since
\[
\norm{\E[X\ind_{B^{c}}]}\leq\E[\norm X\ind_{B^{c}}]\leq3\sqrt{\tr\Sigma}\exp\Bpar{-\frac{t}{2}\,\bpar{\frac{\tr\Sigma}{\cpi(\pi)}}^{1/2}}\,.
\]

Combining all bounds on the three terms, it follows that with probability $1-\nicefrac{\fp}{s}-\nicefrac{\fp}{d}$, 
\[
\Bnorm{\frac{1}{N}\sum_{i=1}^{N}X_{i}}\lesssim\frac{t\sqrt{\tr\Sigma}\log d}{\sqrt{N}}+(s+1)\sqrt{\tr\Sigma}\exp\Bpar{-\frac{t}{2}\,\bpar{\frac{\tr\Sigma}{\cpi(\pi)}}^{1/2}}\,.
\]
Taking $s=8d$ and $t\asymp(\frac{\tr\Sigma+\cpi(\pi)}{\tr\Sigma})^{1/2}\log\frac{d\max(d,\tr\Sigma)}{\varepsilon}$,
we have 
\[
\Bnorm{\frac{1}{N}\sum_{i=1}^{N}X_{i}}\lesssim\varepsilon^{1/2}+\bpar{\frac{\tr\Sigma+\cpi(\pi)}{N}}^{1/2}\log d\log\frac{d^{2}\tr\Sigma}{\varepsilon}\,.
\]
Therefore, it suffices to take $N\asymp\frac{\tr\Sigma+\cpi(\pi)}{\veps}\log^{2}d\log^{2}\frac{d\max(d,\tr\Sigma)}{\varepsilon}$
and then revert the translation for centering.
\end{proof}

For the multiplicative version, we can just work with the relative error $\veps \norm{\Sigma}_{\op}$ in place of $\veps$.
\begin{cor}\label{cor:mean-multi}
    Under Assumption~\ref{as:markov}, if $\lambda\geq0.99$, then with probability at least $1-\nicefrac{\fp}{d}$ we have that for any $\veps\in(0,d)$,
\[
\norm{\overline{X}-\mu}^{2}\leq\varepsilon\norm{\Sigma}_{\op}\,,
\]
so long as $N\asymp\frac{d}{\veps}(1\vee \frac{\cpi(\pi)}{\tr \Sigma}\log^2\frac{d}{\veps})\log^{2}d $.
\end{cor}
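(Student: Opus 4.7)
The plan is to simply re-run the argument from the proof of Lemma~\ref{lem:mean-estimation}, but with target accuracy $\veps' \coloneqq \veps\norm{\Sigma}_{\op}$ in the place of $\veps$, and then re-optimize the truncation radius $t$ using the elementary inequality $\tr\Sigma\le d\norm{\Sigma}_{\op}$. Reducing to the centered case by translation, choosing $s=8d$ in Markov's inequality for Terms $\msf B_2$ and $\msf B_3$, and combining with Corollary~\ref{cor:vec-bern-spectral} on Term $\msf B_1$ reproduces the display
\[
\Bnorm{\frac{1}{N}\sum_{i=1}^{N}X_{i}}\lesssim\frac{t\sqrt{\tr\Sigma}\log d}{\sqrt{N}}+d\sqrt{\tr\Sigma}\exp\Bpar{-\frac{t}{2}\bpar{\frac{\tr\Sigma}{\cpi(\pi)}}^{1/2}}
\]
with probability at least $1-\nicefrac{\fp}{d}$. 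The objective is now that the right-hand side is $\lesssim\sqrt{\veps\norm{\Sigma}_{\op}}$ rather than $\sqrt{\veps}$.

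For the tail (second) term, this requires
\[
\frac{t}{2}\bpar{\tr\Sigma/\cpi(\pi)}^{1/2}\gtrsim\log\frac{d\sqrt{\tr\Sigma}}{\sqrt{\veps\norm{\Sigma}_{\op}}}\,,
\]
and since $\tr\Sigma/\norm{\Sigma}_{\op}\le d$, the right-hand side is $\lesssim\log(d/\veps)$. Thus it suffices to take $t\asymp 1\vee\sqrt{\cpi(\pi)/\tr\Sigma}\,\log(d/\veps)$. Plugging this choice into the first term yields the requirement
\[
N\gtrsim\frac{t^{2}\tr\Sigma\log^{2}d}{\veps\norm{\Sigma}_{\op}}\le\frac{t^{2}\,d\log^{2}d}{\veps}\asymp\frac{d}{\veps}\Bpar{1\vee\frac{\cpi(\pi)}{\tr\Sigma}\log^{2}\frac{d}{\veps}}\log^{2}d\,,
\]
where the middle inequality uses $\tr\Sigma\le d\norm{\Sigma}_{\op}$ once more. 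Reverting the translation completes the argument.

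There is no real obstacle here; the only subtlety is to verify that the logarithmic factor inside the tail bound, which a priori reads $\log\bigl(d\sqrt{\tr\Sigma/\norm{\Sigma}_{\op}}/\sqrt{\veps}\bigr)$, can be absorbed into $\log(d/\veps)$ by the bound $\tr\Sigma/\norm{\Sigma}_{\op}\le d$. This is precisely the step that converts the additive dependence on $\tr\Sigma+\cpi(\pi)$ in Lemma~\ref{lem:mean-estimation} into the purely multiplicative $d$ (plus a relative Poincar\'e term $\cpi(\pi)/\tr\Sigma$) appearing in the corollary.
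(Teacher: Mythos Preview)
Your proposal is correct and follows exactly the approach the paper indicates: the paper's proof is the single sentence ``we can just work with the relative error $\veps\norm{\Sigma}_{\op}$ in place of $\veps$,'' and you have simply spelled out how that substitution propagates through the proof of Lemma~\ref{lem:mean-estimation}, re-optimizing $t$ and using $\tr\Sigma\le d\norm{\Sigma}_{\op}$ to simplify both the log factor and the leading coefficient into the form stated in the corollary.
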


\subsubsection{Covariance estimation}\label{app:cov-proof}

\paragraph{Additive version.}
We extend \cite[Lemma A.2]{jia2021reducing}, proven for independent samples distributed according to any log-concave distribution.
The following lemma contains the bound for the covariance part of our problem.

\begin{lem}\label{lem:cov-estimation} Under Assumption~\ref{as:markov}, if $\lambda\geq0.99$, then with probability at least $1-\nicefrac{\fp}{d}$
we have 
\[
\Bigl|\frac{1}{N}\sum (X_{i}-\mu)^{\otimes2}-\Sigma\Bigr|\preceq\veps\Sigma+\delta I
\]
so long as $N\asymp\frac{\tr\Sigma+\cpi(\pi)}{\veps\delta}\log^{2}\frac{d\,\max(d,\cpi(\pi)+\tr\Sigma)}{\delta}\log^{2}d$. 
\end{lem}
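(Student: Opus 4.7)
The plan is to mimic the three-way truncation split used in the mean-estimation proof (Lemma~\ref{lem:mean-estimation}), but to strengthen the conclusion from an operator-norm bound to the PSD-form bound $|A|\preceq\veps\Sigma+\delta I$ by working with \emph{rescaled} matrices. Setting $G\deq\veps\Sigma+\delta I_d$, the target bound is equivalent to $\norm{G^{-1/2}\bpar{\frac{1}{N}\sum_{i=1}^N (X_i-\mu)^{\otimes 2}-\Sigma}G^{-1/2}}\leq 1$. After translating so that $\mu=0$ and truncating at $B_t\deq\sbrace{x:\norm{x}\leq t\sqrt{\tr\Sigma}}$ with $t$ to be chosen, I write $\frac{1}{N}\sum X_i^{\otimes 2}-\Sigma=\msf A_1+\msf A_2-\msf A_3$ as in the proof sketch of Theorem~\ref{thm:add-cov-Poincare}, and bound each term after conjugation by $G^{-1/2}$.

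For the main term $\msf A_1$, I apply Theorem~\ref{thm:dependent-bernstein} to the rescaled symmetric matrices $\tilde F_i\deq\frac{1}{N}\,G^{-1/2}(X_iX_i^\T\ind_B-\E[X_iX_i^\T\ind_B])\,G^{-1/2}$. The key estimates are (i) $\norm{\tilde F_i}\leq\frac{2t^2\tr\Sigma}{N\delta}$ via $\norm{G^{-1}}\leq 1/\delta$ and the truncation; (ii) the PSD identity $\E[\norm{X}^2 XX^\T\ind_B]\preceq t^2\tr\Sigma\cdot\Sigma$, which bounds the second moment in terms of $\Sigma$ rather than $\norm{\Sigma}\,I$; and (iii) combining these with $\Sigma\preceq G/\veps$ yields $\norm{\E[\tilde F_i^2]}\leq\frac{4t^2\tr\Sigma}{N^2\delta\veps}$, so that $\sigma^2=\frac{4t^2\tr\Sigma}{N\delta\veps}$. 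Theorem~\ref{thm:dependent-bernstein} with $\lda\geq0.99$ then gives, with probability $1-\nicefrac{\fp}{d}$, that $\norm{\sum_i\tilde F_i}_{\mrm F}\lesssim\log d\cdot(\sigma+\mc M)$, which drops below a small absolute constant as soon as $N\gtrsim t^2\tr\Sigma\log^2 d/(\veps\delta)$.

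For the tail terms $\msf A_2,\msf A_3$, I bound their rescaled operator norms by $\norm{G^{-1/2}\msf A_j G^{-1/2}}\leq\norm{\msf A_j}/\delta$. For $\msf A_3$, Cauchy--Schwarz together with the fourth-moment bound (Lemma~\ref{lem:4th-Poincare}) and the Lipschitz tail estimate (Lemma~\ref{lem:tail-Poincare}) gives $\E[\norm{X}^2\ind_{B_t^c}]\leq\sqrt{(4\cpi(\pi)+\tr\Sigma)\tr\Sigma}\cdot 3\exp\bpar{-\tfrac{(t-1)\sqrt{\tr\Sigma/\cpi(\pi)}}{2}}$, exactly as in~\eqref{eq:poin-bound-BC}. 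Markov's inequality with slack $s\asymp d$ handles $\msf A_2$ analogously, the factor of $s$ being absorbed by a slightly larger $t$. Requiring all three contributions to be at most a constant times $\delta$ forces $t\asymp\sqrt{1+\cpi(\pi)/\tr\Sigma}\,\log\bpar{d\max(d,\cpi(\pi)+\tr\Sigma)/\delta}$. Substituting this into the Bernstein constraint on $N$ yields the claimed $N\asymp\frac{\tr\Sigma+\cpi(\pi)}{\veps\delta}\log^2\frac{d\max(d,\cpi(\pi)+\tr\Sigma)}{\delta}\log^2 d$.

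The principal obstacle is maintaining the \emph{PSD} form of the inequality rather than settling for an operator-norm bound. A direct application of matrix Bernstein to the unscaled $F_i$ yields only $\norm{\msf A_1}\lesssim\sqrt{\tr\Sigma\norm{\Sigma}/N}$, which unavoidably carries a $\norm{\Sigma}$ factor that cannot be absorbed into $\veps\Sigma+\delta I$ by any norm inequality. The rescaling trick is what fixes this: it converts the spectral-norm dependence on $\norm{\Sigma}$ into a $1/\veps$ factor via $\Sigma\preceq G/\veps$, producing the $\nicefrac{1}{(\veps\delta)}$ scaling. A secondary difficulty is that under only Markovian sampling one cannot piggyback on a sub-exponential union bound over $\sbrace{X_i\notin B_t}$ (as in the log-concave i.i.d. argument of \cite{adamczak2010quantitative}); instead the tail terms are controlled at the level of expectations via Markov's inequality, which is the source of the additional logarithmic factor in $t$.
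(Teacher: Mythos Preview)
Your proposal is correct and follows the same truncation-plus-Bernstein strategy as the paper. The only difference is cosmetic: you conjugate directly by $G^{-1/2}=(\veps\Sigma+\delta I)^{-1/2}$ with a Euclidean truncation ball, whereas the paper conjugates by $(\rho\Sigma+I)^{-1/2}$ with a free parameter $\rho$ and the weighted-norm region $B_{\rho,t}=\{x:\norm{x}_{(\rho\Sigma+I)^{-1}}\leq t\sqrt{\tr\Sigma}\}$, then optimizes over $\rho$ together with a Young-inequality parameter $c$ to reach the same $\veps\Sigma+\mc O(\cdot)I$ form---your fixed choice amounts to taking $\rho=\veps/\delta$ from the outset.
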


In the analysis, we can make $\pi$ centered (i.e., $\mu=0$) by translation and work
with a truncated region $B_{\rho,t}:=\{x\in\Rd:\norm x_{(\rho\Sigma+I)^{-1}}\leq t\sqrt{\tr\Sigma}\}$ with $t$ and $\rho$ to be determined; we again suppress the subscripts. Recall from \S\ref{sec:covar_est} that we have the following decomposition:
\[
\frac{1}{N}\sum X_{i}^{\otimes2}-\Sigma=\underbrace{\frac{1}{N}\sum(X_{i}^{\otimes2}\ind_{B}-\E[X_{i}^{\otimes2}\ind_{B}])}_{\eqqcolon\msf A_1}+\underbrace{\frac{1}{N}\sum X_{i}^{\otimes2}\ind_{B^{c}}}_{\eqqcolon\msf A_2}-\underbrace{\E[X^{\otimes2}\ind_{B^{c}}]}_{\eqqcolon\msf A_3}\,.
\]
We now bound each term separately. 
\begin{lem}[Term $\msf A_1$] \label{lem:covar_termA} With probability at least $1-\nicefrac{\fp}{d}$, for $t\geq2$,
\[
\Bigl|\frac{1}{N}\sum(X_{i}^{\otimes2}\ind_{B}-\E[X_{i}^{\otimes2}\ind_{B}])\Bigr|\preceq\veps\Sigma+\O\bpar{\frac{t^{2}\tr\Sigma \log^{2}d}{\veps N}}I\,.
\]
\end{lem}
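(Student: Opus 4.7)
The natural tool is Theorem~\ref{thm:dependent-bernstein}, but applied na\"ively it only yields a bound of the form $\lvert\msf A_1\rvert \preceq \eta I$, losing the multiplicative $\veps\Sigma$ piece. To recover the finer decomposition, the plan is to apply Bernstein to a \emph{rescaled} sum. Set $A \deq (\rho\Sigma + I_d)^{-1/2}$ for a parameter $\rho > 0$ to be chosen later, define $Y_i \deq AX_i$, and observe that $B = B_{\rho,t} = \{x : \norm{Ax}^2 \leq t^2 \tr\Sigma\}$ so the truncation enforces $\norm{Y_i}^2 \leq t^2\tr\Sigma$ on the support of $\ind_B$. Put
\[
G_i \deq A F_i(X_i)\, A = \tfrac{1}{N}\bigl(Y_i^{\otimes 2}\ind_B - \E[Y_i^{\otimes 2}\ind_B]\bigr)\,,
\]
so that $A \msf A_1 A = \sum_{i=1}^N G_i$. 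A Bernstein bound on $\lVert A\msf A_1 A\rVert_{\op}\leq \lVert A\msf A_1 A\rVert_{\mathrm F}\leq \eta$ can then be unwrapped to $\lvert \msf A_1 \rvert \preceq \eta\,(\rho\Sigma + I_d) = \eta\rho\,\Sigma + \eta\, I_d$, and choosing $\rho$ so that $\eta\rho \lesssim \veps$ delivers the stated form.

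\textbf{Parameters of the Bernstein bound.} Each $G_i$ is mean zero and symmetric. For the uniform norm, on $B$ we have $\lVert Y_i^{\otimes 2}\ind_B\rVert_{\op} = \norm{Y_i}^2 \leq t^2\tr\Sigma$, and the expectation satisfies $\lVert \E[Y_i^{\otimes 2}\ind_B]\rVert_{\op} \leq \lVert A\Sigma A\rVert_{\op} \leq 1/\rho$ since the eigenvalues of $A\Sigma A$ are $\lambda_j/(\rho\lambda_j + 1)\leq 1/\rho$. Thus $\mc M \lesssim t^2\tr\Sigma / N$ provided $\rho t^2\tr\Sigma \geq 1$. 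For the variance, the leading contribution is
\[
\E[Y_i^{\otimes 2}\ind_B\, Y_i^{\otimes 2}\ind_B] = \E[\norm{Y_i}^2\, Y_i^{\otimes 2}\, \ind_B] \preceq t^2\tr\Sigma \cdot A\Sigma A\,,
\]
so $\mc V_i \leq \lVert\E[G_i^2]\rVert_{\op} \lesssim t^2\tr\Sigma/(N^2\rho)$ and $\sigma^2 = \sum_i \mc V_i \lesssim t^2\tr\Sigma/(N\rho)$.

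\textbf{Applying Bernstein and tuning $\rho$.} Plugging $\mc M$ and $\sigma^2$ into Theorem~\ref{thm:dependent-bernstein} and inverting the tail for failure probability $\nicefrac{\fp}{d}$, with $\lambda\geq 0.99$ we obtain with the stated probability
\[
\lVert A\msf A_1 A\rVert_{\mathrm F} \lesssim \sqrt{\sigma^2\log d} + \mc M \log d \lesssim \sqrt{\frac{t^2\tr\Sigma\,\log d}{N\rho}} + \frac{t^2\tr\Sigma\,\log d}{N} \eqqcolon \eta\,.
\]
Unwrapping, $\lvert\msf A_1\rvert \preceq \eta\rho\,\Sigma + \eta\, I_d$, so we need $\eta\rho \lesssim \veps$. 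The linear piece contributes $\mc M\log d\cdot\rho \lesssim \veps$ if $\rho \lesssim \veps N/(t^2\tr\Sigma\log d)$; the sub-Gaussian piece contributes $\rho\sqrt{\sigma^2\log d} \lesssim \veps$ if $\rho \lesssim \veps^2 N/(t^2\tr\Sigma\log d)$. The second is the binding constraint for $\veps\in(0,1)$, and setting $\rho$ to this value yields $\eta \lesssim t^2\tr\Sigma\log d/(\veps N)$, and in turn the advertised coefficient $\O\bigl(t^2\tr\Sigma\log^2 d/(\veps N)\bigr)$ of $I_d$ after a slack $\log d$ to absorb absolute constants and to cover the $\lvert-\msf A_1\rvert$ direction by symmetry.

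\textbf{Main obstacle.} The delicate step is the second-moment computation: one must avoid bounding $\mc V_i$ by $\mc M^2$ (which would cost an entire factor of $t^2\tr\Sigma$ and ruin the final complexity) and instead exploit $Y_i^{\otimes 2} Y_i^{\otimes 2} = \norm{Y_i}^2 Y_i^{\otimes 2}$ on $B$ so that the variance sees $A\Sigma A$ rather than the worst-case bound on $B$. Beyond this, the calculation is a careful balance of the sub-Gaussian and linear regimes in the Bernstein tail, and the weighted truncation ensures both that $\norm{Y_i}$ remains controlled and that the variance picks up the $1/\rho$ gain which fuels the multiplicative $\veps\Sigma$ term.
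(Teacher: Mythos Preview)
Your proposal is correct and follows essentially the same route as the paper: rescale by $(\rho\Sigma+I)^{-1/2}$ on both sides, apply the Markovian matrix Bernstein inequality to the transformed sum, exploit $(Y_i^{\otimes 2})^2=\norm{Y_i}^2 Y_i^{\otimes 2}$ so that the variance term picks up $A\Sigma A\preceq \rho^{-1}I$, then unwrap and balance $\rho$ against $\veps$. The only cosmetic differences are that the paper bounds the centering term in $\mc M$ via $\E[\norm{Y_i}^2\ind_B]\le t^2\tr\Sigma$ directly (avoiding your side condition $\rho t^2\tr\Sigma\ge1$), and carries out the balancing via Young's inequality with a parameter $c$ rather than by direct optimization; neither changes the argument in substance.
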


\begin{proof}
We use Theorem~\ref{thm:dependent-bernstein},
setting $F_{i}(X_{i})=\frac{1}{N}(\rho\Sigma+I)^{-1/2}(X_{i}^{\otimes2}\ind_{B}-\E[X_{i}^{\otimes2}\ind_{B}])(\rho\Sigma+I)^{-1/2}$ for $i\in[N]$. 
We will only use the guarantees of Theorem~\ref{thm:dependent-bernstein} in the form of bounds on the operator norm, since this will suffice for our purposes. Denoting $X_{i}^{\otimes2}/(\rho\Sigma+I):=(\rho\Sigma+I)^{-1/2}X_{i}^{\otimes2}(\rho\Sigma+I)^{-1/2}$,
we have
\begin{align*}
\norm{F_{i}(X_{i})} & \leq\frac{\ind_{B}}{N}\Bnorm{\frac{X_{i}^{\otimes2}}{\rho\Sigma+I}}+\frac{1}{N}\E\Bbrack{\Bnorm{\frac{X_{i}^{\otimes2}}{\rho\Sigma+I}}\,\ind_{B}}\\
 & =\frac{1}{N}\norm{X_{i}}_{(\rho\Sigma+I)^{-1}}^{2}\ind_{B}+\frac{1}{N}\E[\norm{X_{i}}_{(\rho\Sigma+I)^{-1}}^{2}\ind_{B}]\\
 & \leq\frac{2}{N}t^{2}\tr\Sigma\,,
\end{align*}
where the last line follows from the definition of $B$. Hence, we
set $\mc M=2t^{2}\tr\Sigma/N$ in the theorem. As for the variance,
\begin{align*}
\E[F_{i}(X_{i})^{2}] & \underset{(i)}{\preceq}\frac{2}{N^{2}}\E\bbrack{\bpar{\frac{X_{i}^{\otimes2}}{\rho\Sigma+I}}^{2}\ind_{B}}+\frac{2}{N^{2}}\bpar{\E\bbrack{\frac{X_{i}^{\otimes2}}{\rho\Sigma+I}\,\ind_{B}}}^{2}\\
 & \underset{(ii)}{\preceq}\frac{2}{N^{2}}\E\bbrack{\norm{X_{i}}_{(\rho\Sigma+I)^{-1}}^{2}\frac{X_{i}^{\otimes2}}{\rho\Sigma+I}\,\ind_{B}}+\frac{2}{N^{2}}\bnorm{\E\bbrack{\frac{X_{i}^{\otimes2}}{\rho\Sigma+I}\,\ind_{B}}}\,\E\bbrack{\frac{X_{i}^{\otimes2}}{\rho\Sigma+I}\,\ind_{B}}\\
 & \underset{(iii)}{\preceq}\frac{2t^{2}\tr\Sigma}{N^{2}}\,\frac{\Sigma}{\rho\Sigma+I}+\frac{2t^{2}\tr\Sigma}{N^{2}}\,\frac{\Sigma}{\rho\Sigma+I}=\frac{2\mc M}{N}\,\frac{\Sigma}{\rho\Sigma+I}\,,
\end{align*}
where in $(i)$ we used $(A-B)^{2}\preceq2(A^{2}+B^{2})$ for matrices
$A$ and $B$, in $(ii)$ used $A^{2}\preceq\norm AA$ for a positive semidefinite matrix $A$,
and $(iii)$ follows from $\E[X_{i}^{\otimes2}]=\Sigma$ and the definition
of $B$. Hence,
\[
\norm{\E[F_{i}(X_{i})^{2}]}\leq\frac{2\mc M}{N}\,\Bnorm{\frac{\Sigma}{\rho\Sigma+I}}\,,
\]
so we can set $\mc V_i=2\mc M\norm{\Sigma/(\rho\Sigma+I)}/N$ and
$\sigma^{2}=2\mc M\norm{\Sigma/(\rho\Sigma+I)}$.

Using Theorem~\ref{thm:dependent-bernstein} with $\alpha(\lda),\beta(\lda)=\O(1)$,
we have 
\begin{align*}
&\P\Bpar{\Bnorm{\frac{1}{N}(\rho\Sigma+I)^{-1/2}\sum(X_{i}^{\otimes2}\ind_{B}-\E[X_{i}^{\otimes2}\ind_{B}])(\rho\Sigma+I)^{-1/2}}\geq\ell}\\
&\qquad\qquad\qquad\qquad\leq d^{2-\pi/4}\exp\Bpar{-\frac{\ell^{2}/(32/\pi^{2})}{\alpha(\lda)\,\sigma^{2}+\beta(\lda)\,\mc M\ell}}\,.
\end{align*}
Hence, with probability at least $1-\nicefrac{\fp}{d}$, using Young's inequality
with any $c>0$,
\begin{align*}
\norm{(\rho\Sigma+I)^{-1/2}\msf A_1(\rho\Sigma+I)^{-1/2}} & \lesssim(\mc M+\sigma)\log d\leq\Bpar{\mc M+\sqrt{2\mc M\,\Bnorm{\frac{\Sigma}{\rho\Sigma+I}}}}\log d\\
 & \lesssim\Bpar{(1+c)\mc M+\frac{1}{c}\,\Bnorm{\frac{\Sigma}{\rho\Sigma+I}}}\log d\\
 & \leq\bpar{(1+c)\,\frac{t^{2}\tr\Sigma}{N}+\frac{1}{c\rho}}\log d
\end{align*}
Therefore, with probability at least $1-\nicefrac{\fp}{d}$,
\[
\msf A_1\precsim\bpar{(1+c)\,\frac{t^{2}\tr\Sigma}{N}+\frac{1}{c\rho}}\log d\cdot(\rho\Sigma+I)\,.
\]
By solving $\frac{\log d}{c}=\veps$ and $\frac{ct^{2}\tr\Sigma}{N}=\frac{1}{c\rho}$
for $c$ and $\rho$, there exist multiples of $c$ and $\rho$ such
that 
\[
\msf A_1 \preceq\veps\Sigma+\O\bpar{\frac{t^{2}\tr\Sigma\log^{2}d}{\veps N}}\,I\,.
\]
Under the same event, we also have $-\msf A_1 \preceq\veps\Sigma+\O(\frac{t^{2}\tr\Sigma \log^{2}d}{\veps N})I$
in a similar manner.
\end{proof}
\begin{lem}[Term $\msf A_2$ and $\msf A_3$] \label{lem:covar_termBC} For $s>0$ and $t\geq2$, with probability at least $1-\nicefrac{\fp}{s}$,
\[
\Bnorm{\frac{1}{N}\sum X_{i}^{\otimes2}\ind_{B^{c}}}\leq s\,\E[X^{\otimes2}\ind_{B^{c}}]\leq3s\sqrt{\bpar{4\cpi(\pi)+\tr\Sigma}\tr\Sigma}\exp\Bpar{-\frac{t}{2}\,\bpar{\frac{\tr\Sigma}{\cpi(\pi)}}^{1/2}}\,.
\]
\end{lem}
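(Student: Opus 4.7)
\textbf{Proof proposal for Lemma~\ref{lem:covar_termBC}.}
The plan is to reduce both terms $\msf A_2$ and $\msf A_3$ to a single scalar quantity, namely the truncated second moment $\E_\pi[\norm{X}^2 \ind_{B^c}]$, and then bound this via Cauchy--Schwarz using the fourth moment estimate (Lemma~\ref{lem:4th-Poincare}) together with the sub-exponential tail bound (Lemma~\ref{lem:tail-Poincare}). Note that since $X^{\otimes 2}$ is a rank-one symmetric PSD matrix, $\norm{X^{\otimes 2}}_{\op} = \norm{X}^2$, which is what enables this reduction.

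For $\msf A_3$, Jensen's inequality immediately gives
\[
\Bnorm{\E[X^{\otimes 2}\ind_{B^c}]} \leq \E\bbrack{\norm{X^{\otimes 2}}\ind_{B^c}} = \E_\pi[\norm{X}^2 \ind_{B^c}]\,.
\]
For $\msf A_2$, apply Markov's inequality to the nonnegative random variable $\bnorm{\tfrac{1}{N}\sum_i X_i^{\otimes 2}\ind_{B^c}}$: with probability at least $1-\nicefrac{1}{s}$, this norm is bounded by $s$ times its expectation, which by the triangle inequality and stationarity satisfies
\[
\E\Bbrack{\Bnorm{\frac{1}{N}\sum_{i=1}^N X_i^{\otimes 2}\ind_{B^c}}} \leq \frac{1}{N}\sum_{i=1}^N \E\bbrack{\norm{X_i}^2 \ind_{B^c}} = \E_\pi[\norm{X}^2 \ind_{B^c}]\,.
\]

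It then remains to bound $\E_\pi[\norm{X}^2 \ind_{B^c}]$. The key observation is that since the eigenvalues of $(\rho\Sigma+I)^{-1}$ are all at most $1$, we have $\norm{x}_{(\rho\Sigma+I)^{-1}} \leq \norm{x}$, so $B^c \subseteq \{x : \norm{x} > t\sqrt{\tr\Sigma}\}$; the anisotropic truncation can only make $B^c$ smaller than the Euclidean counterpart. Applying Cauchy--Schwarz,
\[
\E_\pi[\norm{X}^2 \ind_{B^c}] \leq \sqrt{\E_\pi[\norm{X}^4]} \cdot \sqrt{\P_\pi(\norm{X} > t\sqrt{\tr\Sigma})}\,,
\]
and plugging in Lemma~\ref{lem:4th-Poincare} for the first factor and Lemma~\ref{lem:tail-Poincare} for the second factor (valid since $t \geq 2 \geq 1$) yields the claimed bound, up to absorbing the constant $\sqrt{3}$ and replacing $(t-1)/2$ by $t/2$ at the cost of a bounded multiplicative factor (using $t \geq 2$). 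The only mildly delicate step is this comparison between the anisotropic and Euclidean truncation regions; everything else is standard moment-and-tail manipulation, and I do not expect any genuine obstacle.
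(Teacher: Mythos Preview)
Your proposal is correct and follows essentially the same route as the paper's proof: Markov's inequality on the norm of $\msf A_2$, triangle inequality plus stationarity to reduce to $\E_\pi[\norm{X}^2\ind_{B^c}]$, and then Cauchy--Schwarz combined with Lemmas~\ref{lem:4th-Poincare} and~\ref{lem:tail-Poincare}. The only notable difference is that you make explicit the inclusion $B^c\subseteq\{\norm{x}>t\sqrt{\tr\Sigma}\}$ coming from $\norm{x}_{(\rho\Sigma+I)^{-1}}\leq\norm{x}$, whereas the paper passes directly to $\P(\norm X\geq t\sqrt{\tr\Sigma})$ without comment; your extra sentence fills a small gap that the paper leaves implicit.
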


\begin{proof}
Using Markov's inequality, we have 
\[
\P\Bpar{\Bnorm{\frac{1}{N}\sum X_{i}^{\otimes2}\ind_{B^{c}}}\geq s\,\E\Bbrack{\Bnorm{\frac{1}{N}\sum X_{i}^{\otimes2}\ind_{B^{c}}}}}\leq\frac{1}{s}\qquad\text{for any }s>0\,.
\]
The expectation can be bounded by 
\begin{align*}
\E\Bbrack{\Bnorm{\frac{1}{N}\sum X_{i}^{\otimes2}\ind_{B^{c}}}} & \leq\E\Bbrack{\frac{1}{N}\sum_{i}\norm{X_{i}}^{2}\ind_{B^{c}}}=\E[\norm X^{2}\ind_{B^{c}}]\\
 & \leq\sqrt{\E[\norm X^{4}]}\sqrt{\P\bpar{\norm X\geq t\sqrt{\tr\Sigma}}}\\
 & \leq3\sqrt{\bpar{4\cpi(\pi)+\tr\Sigma}\,\tr\Sigma}\exp\Bpar{-\frac{t}{2}\,\bpar{\frac{\tr\Sigma}{\cpi(\pi)}}^{1/2}}\qquad\text{for any }t\geq2\,,
\end{align*}
where the bounds on the first and the second term follow from Lemma~\ref{lem:4th-Poincare} and Lemma~\ref{lem:tail-Poincare} respectively. Since $\norm{\msf A_3}=\norm{\E[X^{\otimes2}\ind_{B^{c}}]} \leq \E[\norm X^{2}\ind_{B^{c}}]$, it is also bounded by the last bound above.
\end{proof}

\begin{proof}[{Proof of Lemma~\ref{lem:cov-estimation}}]
Putting these bounds together with $s=d$, with probability at least $1-\nicefrac{\fp}{d}$,
\[
\Bigl|\frac{1}{N}\sum X_{i}^{\otimes2}-\Sigma\Bigr|\preceq\veps\Sigma+\O\Bpar{\frac{t^{2}\tr\Sigma\log^{2}d}{\veps N}+\underbrace{d\,\bpar{\cpi(\pi)+\tr\Sigma}\exp\Bpar{-\frac{t}{2}\,\bpar{\frac{\tr\Sigma}{\cpi(\pi)}}^{1/2}}}_{\eqqcolon(\#)}}I\,.
\]
By setting $t\asymp2(\frac{\tr\Sigma+\cpi(\pi)}{\tr\Sigma})^{1/2}\log\frac{d\max(d,\cpi(\pi)+\tr\Sigma)}{\delta}$, we can make $(\#)\lesssim\delta$. The claim then follows by taking 
\[
N\asymp\frac{\tr\Sigma+\cpi(\pi)}{\veps\delta}\log^{2}\frac{d\max(d,\cpi(\pi)+\tr\Sigma)}{\delta}\log^{2}d\,,
\]
which completes the proof.
\end{proof}

\begin{proof}[{Proof of Theorem~\ref{thm:add-cov-Poincare}}]
Consider $\Sigma'=\frac{1}{N}\sum_{i}(X_{i}-\mu)^{\otimes2}=\frac{1}{N}\sum_{i}X_{i}^{\otimes2}-\overline{X}\mu^{\T}-\mu\overline{X}^{\T}+\mu\mu^{\T}$, where $\mu$ is the true mean $\E_\pi X$.
Now, we have 
\[
\overline{\Sigma}-\Sigma' =\frac{1}{N}\sum_{i=1}^N X_{i}^{\otimes2}-\overline{X}^{\otimes2}-\Bpar{\frac{1}{N}\sum_{i=1}^NX_{i}^{\otimes2}-\overline{X}\mu^{\T}-\mu\overline{X}^{\T}+\mu\mu^{\T}} =-(\overline{X}-\mu)^{\otimes2}\,.
\]

For the true covariance $\Sigma=\E[X^{\otimes2}]-\mu^{\otimes2}$, observe that 
\[
\overline{\Sigma}-\Sigma =\overline{\Sigma}-\Sigma'+\Sigma'-\Sigma =\underbrace{\frac{1}{N}\sum_{i=1}^{N}\bpar{(X_{i}-\mu)^{\otimes2}-\Sigma}}_{\eqqcolon\msf A}-\underbrace{(\overline{X}-\mu)^{\otimes2}}_{\eqqcolon\msf B}\,.
\]
It suffices therefore to separately control the error for the centred covariance estimator in $\msf{A}$ (Lemma~\ref{lem:cov-estimation}), and the mean estimation error in $\msf B$ (Lemma~\ref{lem:mean-estimation}).

The theorem then follows immediately upon combining Lemma~\ref{lem:mean-estimation} and~\ref{lem:cov-estimation}.
\end{proof}

\paragraph{Multiplicative version.}
The proof of this version is similar in overall with the previous one, proceeding with a truncated region $B_t :=\{x\in \Rd: \norm{x} \leq t\sqrt{\tr \Sigma}\}$ instead of $B_{\rho, t}$.

\begin{lem}\label{lem:mul-cov-estimation} Under Assumption~\ref{as:markov}, if $\lambda\geq0.99$, then with probability at least $1-\nicefrac{\fp}{d}$
we have
\[
\Bnorm{\frac{1}{N}\sum X_{i}^{\otimes2}-\Sigma}\leq \veps \norm{\Sigma}
\]
so long as $N\asymp\frac{d}{\veps^2}\,\frac{\tr\Sigma+\cpi(\pi)}{\tr\Sigma}\log^{2}\frac{d\,(\cpi(\pi)+\tr\Sigma)}{\veps \norm{\Sigma}}\log^{2}d$. 
\end{lem}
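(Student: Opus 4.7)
The plan is to mirror the proof of Lemma~\ref{lem:cov-estimation}, but with the $\Sigma$-weighted truncation $B_{\rho,t}$ replaced by the plain Euclidean ball $B_t := \{x \in \Rd : \norm{x} \leq t\sqrt{\tr\Sigma}\}$, since we now want a purely multiplicative bound $\veps\norm{\Sigma}$ rather than the additive decomposition $\veps\Sigma + \delta I$. After translating so that $\mu = 0$, I would split
\[
\frac{1}{N}\sum_{i=1}^N X_i^{\otimes 2} - \Sigma \;=\; \msf A_1 + \msf A_2 - \msf A_3\,,
\]
with $\msf A_1,\msf A_2,\msf A_3$ defined exactly as in \S\ref{app:cov-proof} but relative to $B_t$, and bound each term in operator norm by $O(\veps\norm{\Sigma})$.

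For $\msf A_1$, apply Theorem~\ref{thm:dependent-bernstein} directly to $F_i := \tfrac{1}{N}(X_i^{\otimes 2}\ind_B - \E[X_i^{\otimes 2}\ind_B])$ without any conjugation. The pointwise bound $\norm{X_i}^2\ind_B \leq t^2\tr\Sigma$ gives $\mc M := \sup\norm{F_i} \leq \tfrac{2t^2\tr\Sigma}{N}$, while the matrix variance identity $\E[(Y-\E Y)^2]\preceq\E[Y^2]$ applied to $Y = X_i^{\otimes 2}\ind_B$ yields
\[
\E[F_i^2] \;\preceq\; \tfrac{1}{N^2}\,\E[\norm{X_i}^2\, X_i^{\otimes 2}\,\ind_B] \;\preceq\; \tfrac{t^2\tr\Sigma}{N^2}\,\Sigma\,,
\]
hence $\sigma^2 = N\norm{\E[F_i^2]} \leq \tfrac{t^2\tr\Sigma\,\norm{\Sigma}}{N}$. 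Since $\lda\geq 0.99$, the Bernstein tail from Theorem~\ref{thm:dependent-bernstein} gives $\norm{\msf A_1} \lesssim (\mc M + \sigma)\log d$ with probability at least $1-\nicefrac{\fp}{d}$. The tails $\msf A_2$ and $\msf A_3$ are handled exactly as in Lemma~\ref{lem:covar_termBC}, since on $B_t^c = \{\norm{x}> t\sqrt{\tr\Sigma}\}$ one applies Lemma~\ref{lem:tail-Poincare} directly to the Euclidean norm, together with the fourth-moment bound of Lemma~\ref{lem:4th-Poincare}, yielding $\norm{\msf A_2},\,\norm{\msf A_3} \lesssim d\sqrt{(\cpi(\pi)+\tr\Sigma)\,\tr\Sigma}\,\exp\bpar{-\tfrac{t}{2}\sqrt{\tr\Sigma/\cpi(\pi)}}$ upon choosing Markov level $s = d$.

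To finish, choose $t \asymp \sqrt{(\tr\Sigma+\cpi(\pi))/\tr\Sigma}\cdot\log\tfrac{d(\cpi(\pi)+\tr\Sigma)}{\veps\norm{\Sigma}}$ so that $\norm{\msf A_2}+\norm{\msf A_3} \lesssim \veps\norm{\Sigma}$; then requiring $\sigma\log d \lesssim \veps\norm{\Sigma}$ forces $N \gtrsim \tfrac{t^2\tr\Sigma\,\log^2 d}{\veps^2\norm{\Sigma}} \asymp \tfrac{\tr\Sigma+\cpi(\pi)}{\veps^2\norm{\Sigma}}\,\log^2 d\,\log^2\!\tfrac{d(\cpi(\pi)+\tr\Sigma)}{\veps\norm{\Sigma}}$, and the elementary bound $\norm{\Sigma}\geq \tr\Sigma/d$ turns this into the (slightly weaker) form $\tfrac{d\,(\tr\Sigma+\cpi(\pi))}{\veps^2\,\tr\Sigma}\,\log^2 d\,\log^2(\cdots)$ stated in the lemma. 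The main technical point compared to the additive case is the variance bound for $\msf A_1$: since we no longer conjugate by $(\rho\Sigma+I)^{-1/2}$ and therefore cannot invoke Young's inequality to peel a $\delta I$-term off an $\veps\Sigma$-term, we must retain the operator-norm factor $\norm{\Sigma}$ cleanly inside the square root throughout, which is precisely what the refined estimate $\E[Y^2] \preceq t^2\tr\Sigma\cdot\Sigma$ (rather than $t^2\tr\Sigma\cdot I$) supplies.
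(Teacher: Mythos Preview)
Your proposal is correct and follows essentially the same route as the paper: both set $\rho=0$ (i.e.\ truncate on the plain Euclidean ball $B_t$), apply Theorem~\ref{thm:dependent-bernstein} to the uncentered matrices, and reuse Lemma~\ref{lem:covar_termBC} verbatim for $\msf A_2,\msf A_3$. The only cosmetic difference is in how the $\sigma$-term is absorbed: the paper still passes through Young's inequality (with $c=\log d/\veps$) to write $\norm{\msf A_1}\lesssim \veps\norm\Sigma + \tfrac{t^2\tr\Sigma\,\log^2 d}{\veps N}$ and then bounds the second summand by $\veps\norm\Sigma$, whereas you impose $\sigma\log d\lesssim \veps\norm\Sigma$ directly---these two give the identical constraint $N\gtrsim \tfrac{t^2\tr\Sigma\,\log^2 d}{\veps^2\norm\Sigma}$, so your closing remark that ``we\dots cannot invoke Young's inequality'' is a slight mischaracterization rather than a genuine departure.
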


\begin{proof}
    Following the proof of Lemma~\ref{lem:covar_termA} with $\rho = 0$, we have that with probability at least $1-\nicefrac{\fp}{d}$, using Young's inequality with $c= \frac{\log d}{\veps}$,
    \[
    \norm{\msf A_1} \lesssim     
    \bpar{(1+c)\,\frac{t^{2}\tr\Sigma}{N}+\frac{1}{c}\,\norm{\Sigma}}\log d\leq
    \veps \norm{\Sigma} + \frac{t^2\tr\Sigma \log^2 d}{\veps N}\,.
    \]
    Combining this with the bounds for terms $\msf{A}_2$ and $\msf{A}_3$ in Lemma~\ref{lem:covar_termBC}, it follows that
    \[
    \Bnorm{\frac{1}{N}\sum X_{i}^{\otimes2}-\Sigma} \lesssim 
    \veps \norm{\Sigma} + \frac{t^2\tr\Sigma \log^2 d}{\veps N}
    + d\,\bpar{\cpi(\pi)+\tr\Sigma}\exp\Bpar{-\frac{t}{2}\,\bpar{\frac{\tr\Sigma}{\cpi(\pi)}}^{1/2}}\,.
    \]
    We can bound the third term by $\veps \norm{\Sigma}$ by setting $t \geq 2 \vee (\frac{\cpi(\pi)}{\tr\Sigma})^{1/2}\log\frac{d(\cpi(\pi)+\tr\Sigma)}{\veps \norm{\Sigma}}$. Under this choice $t$, the second term can be bounded by $\veps \norm{\Sigma}$ if we take 
    \[
    N\asymp \frac{t^2d\log^2 d}{\veps^2} \asymp 
    \frac{d}{\veps^2}\Bpar{1\vee \frac{\cpi(\pi)}{\tr\Sigma}\log^2\frac{d\bigl(\cpi(\pi)+\tr\Sigma\bigr)}{\veps\norm{\Sigma}}}\log^2 d\,.\qedhere
    \]
\end{proof}

\begin{proof}[Proof of Theorem~\ref{thm:mul-cov-Poincare}]
As in the proof of Theorem~\ref{thm:add-cov-Poincare}, we just combine the two bounds (i.e., errors for the mean and covariance) from Corollary~\ref{cor:mean-multi} and Lemma~\ref{lem:mul-cov-estimation}.
\end{proof}

\begin{proof}[{Proof of Corollary~\ref{cor:mul-cov-Poincare}}]
We just apply Theorem~\ref{thm:mul-cov-Poincare} after transforming
the whole system by $x\mapsto\Sigma^{-1/2}x$. Then, the covariance becomes $\Sigma=I$, and we obtain that for
$N\asymp\frac{d+\cpi(\nu)}{\veps^{2}}\log^{2}\frac{d(d+\cpi(\nu))}{\veps}\log^{2}d$,
\[
\Abs{\Sigma^{-1/2}\overline{\Sigma}\Sigma^{-1/2}-I}\preceq\veps I\,,
\]
from which the claim follows by conjugating both sides by $\Sigma^{1/2}$.
\end{proof}

\section{Details for the applications}\label{app:proofs-app}
\subsection{Isotropic rounding}\label{app:rounding}

We show that $\ino_{N_i}(\pi_i,\cdot, h_i)$ has a spectral gap at least $0.99$.

\begin{proof}[Proof of Lemma~\ref{lem:INO-setting}]
Within each while-loop, $\ino$ iterates $k_{i}N_{i}=\Otilde(cC^{4}d^{3})$ times, and the number of while-loops is at most $2\log d$, as we will show later in Lemma~\ref{lem:basic_property_isotropization}. 
Therefore, the total number of iterations of $\ino$ throughout Algorithm~\ref{alg:iterative_rounding} is $T:=\Otilde(cC^{4}d^{3})$. With the total failure probability of $\ino$ (throughout the entire algorithm) set to $\eta=\nicefrac{1}{d}$, \cite[Theorem 27]{kook2024and} requires the variance $h$ of $\ino$ to be smaller than $r_{i}^{2}(2d^{2}\log\frac{18T}{\eta})^{-1}=r_{i}^{2}(2d^{2}\log18dT)^{-1}$, justifying the choice of $h_{i}$ in Line~\ref{line:INO-setting}.

By \eqref{eq:chi-contraction} (or \cite[Theorem 23]{kook2024and}), it suffices for $\ino$ to iterate $N_{i}\lesssim h_i^{-1}\cpi(\pi_{i})$ times to ensure the spectral gap is at least $0.99$. 
Since $\cpi(\pi_{i})\lesssim \norm{\Sigma_{i}}_{\op}\log d$ (Remark~\ref{rem:PI-pushforward})  and $\norm{\Sigma_{i}}_{\op}\lesssim C^{2}d\log d$ (Lemma~\ref{lem:basic_property_isotropization}), 
$\ino_{N_{i}}(\pi_i,\cdot, h_i)$ with $N_{i}= 2^{10}C^{2}d^{3}r_{i}^{-2}\log(Ccd)$ has the desired spectral gap.
The choices of $h$ and $N$ in Line~\ref{line:last-estimation} can be justified in a similar way.
\end{proof}

\paragraph{Covariance estimation.}

Recall that Theorem~\ref{thm:add-cov-Poincare} assumes that the initial
distribution of a Markov chain is already stationary, though the actual
initial distribution $\nu=\law(X_{0})$ of Algorithm~\ref{alg:iterative_rounding}
is different from $\pi_{\K}$. We address this in Lemma~\ref{lem:cov-estimation-app1}; the conclusion is that we only pay a small additional factor in order to handle this initial bias, which can be absorbed into the other terms.

\begin{proof}[Proof of Lemma~\ref{lem:cov-estimation-app1}]
Let $\nu=\law(X_{0})$ be the law of the warm start generated by $\gc$ in Line~\ref{line:GC-warm}. As $\eu R_{\infty}(\nu\mmid\pi_{\K_{1}})\leq\log2$, then we have $\frac{\nu}{\pi_{i}}\leq2$ almost surely over $\K_{1}$.
Following a similar argument as in Lemma~\ref{lem:change-measure},
we note that all the required bounds  on bad events in
the proof of Theorem~\ref{thm:add-cov-Poincare} are multiplied by
at most $3$, and that $\E_{\nu}[\norm X^{i}]\leq3\E_{\pi_{\K_1}}[\norm X^{i}]$
for $i=2,4$. Therefore, we still have that $\Abs{\widehat{\Sigma}-\Sigma}\preceq\veps\Sigma+\delta I_{d}$ for sufficiently many samples $N$. 

As shown later in Lemma~\ref{lem:basic_property_isotropization},
$\tr\Sigma\lesssim C^{2}d^{2}$ throughout the algorithm,
and it is well-known that $\cpi(\pi_{i})\lesssim\tr\Sigma_{i}$ (due
to the log-concavity of $\pi_{i}$). Hence, with $\veps=0.1$ and $\delta=d/100$,
there exists a universal constant $c$ such that so long as $N\geq cd^{-1}\tr\Sigma\log^{6}Cd$,
\[
\Abs{\widehat{\Sigma}-\Sigma}\preceq\frac{1}{10}\,\Sigma+\frac{d}{100}\,I_{d}\,,
\]
which completes the proof.
\end{proof}

We provide quantitative control of subroutines executed within each while-loop.
\begin{proof}[Proof of Lemma~\ref{lem:basic_property_isotropization}]
The inner radius $r_{i}$ increases by a factor of at least $3/2$ each iteration. Since the algorithm starts with $r_{1}=1/4$ and ends before $r_{j}\leq\sqrt{d}$, it takes fewer than $2\log d$ iterations. 

Now we analyze how the trace and operator norm of the covariance changes each iteration.

\textbf{(1)} As per the algorithm, we have 
\begin{equation}
\Sigma_{i+1}=M_{i}\Sigma_{i}M_{i}\label{eq:Ak_update}
\end{equation}
Since $P_{i}$ is an orthogonal projection matrix,
\[
\tr\Sigma_{i+1} =\tr(\Sigma_{i}^{1/2}M_{i}^{2}\Sigma_{i}^{1/2})=\tr\bpar{\Sigma_{i}^{1/2}(I+3P_{i})\Sigma_{i}^{1/2}}\le4\tr\Sigma_{i}\,.
\]
As $r_{i+1}=2(1-\log^{-1}d)\,r_{i}$, we have that $\nicefrac{\tr\Sigma_{i}}{r_{i}^{2}}$ increases by a factor of $(1-\nicefrac{1}{\log d})^{-2}$ per iteration. Thus, it can increase by up to $10$ times since there are $2\log d$
many iterations. Therefore, we have $\tr\Sigma_{i}\le10r_{i}^{2}\tr\Sigma_{1}=10r_{i}^{2}C^{2}d$
for all $i$. 

\textbf{(2)} Note that $\|\Sigma_{1}\|_{\op}\leq\tr\Sigma_{1}\leq C^{2}d$.
Recall that $A^{\T}A$ and $AA^{\T}$ share the same spectrum (i.e.,
the same set of non-zero eigenvalues). Hence, it follows from \eqref{eq:Ak_update}
that 
\begin{align}
\|\Sigma_{i+1}\|_{\op} & =\|M_{i}\Sigma_{i}M_{i}\|_{\op}=\|\Sigma_{i}^{1/2}M_{i}^{2}\Sigma_{i}^{1/2}\|_{\op}=\|\Sigma_{i}^{1/2}(I+3P_{i})\Sigma_{i}^{1/2}\|_{\op}\nonumber\\
 & \leq\|\Sigma_{i}\|_{\op}+3\|\Sigma_{i}^{1/2}P_{i}\Sigma_{i}^{1/2}\|_{\op}=\|\Sigma_{i}\|_{\op}+3\|P_{i}\Sigma_{i}P_{i}\|_{\op}\label{eq:op-control}\,.
\end{align}
Using $k_{i}=10cr_{i}^{2}C^{2}\log^{6}C^{2}d$ many samples (which is at least $cd^{-1}\tr\Sigma_{i}\log^{6}Cd$ due to \textbf{(2)}), Lemma~\ref{lem:cov-estimation-app1} ensures that $0.9\Sigma_{i}-\frac{d}{100}\,I_{d}\preceq\widehat{\Sigma}_{i}\preceq1.1\Sigma_{i}+\frac{d}{100}\,I_{d}$
with probability at least $1-\nicefrac{\fp}{d}$. 
Conjugating by the projection matrix $P_{i}$, we have using a somewhat lazy bound that
\[
P_{i}\Sigma_{i}P_{i}\preceq\frac{3}{2}\,P_{i}\widehat{\Sigma}_{i}P_{i}+\frac{d}{50}\,P_{i}\,.
\]
Let $\widehat{\Sigma}_{i}=U^{\T}DU$ be its spectral decomposition
with orthogonal matrix $U$. Then, under this decomposition,
both $UP_{i}U^{\T}$ and $D=U\widehat{\Sigma}_{i}U^{\T}$ are diagonal,
where the entry in $UP_{i}U^{\T}$'s diagonal is set to $0$ if the corresponding
diagonal entry of $U\widehat{\Sigma}_{i}U^{\T}$ is larger than $d$, and $1$ otherwise. 
Letting $\min(dI_d,D)$ be the entry-wise minimum of $dI_d$ and $D$, we have that
\[
UP_{i}\Sigma_{i}P_{i}U^{\T}\preceq\frac{3}{2}\,\min(dI_d,D)+\frac{d}{50}I_{d}\preceq2dI_{d}\,,
\]
and $\norm{P_{i}\Sigma_{i}P_{i}}_{\op}\leq 2d$. 
Substituting this back to \eqref{eq:op-control}, we obtain $\norm{\Sigma_{i+1}}_{\op} \leq\|\Sigma_{i}\|_{\op} + 6d$.
Therefore, the claim follows from recursing $i$ times.
\end{proof}

\paragraph{Doubling of the inner radius.}
We recall an additional technical lemma.
\begin{lem}[{\cite{lovasz2006simulated}, Lemma 3.4}] \label{lem:iso-inner-outer-ball}Let
$\mc K$ be convex in $\Rd$ with centroid $\mu$ and covariance
$\Sigma$ satisfying $r^{2}I_{d}\preceq\Sigma\preceq R^{2}I_{d}$.
Then, 
\[
B_{r}(\mu)\subset\mc K\subset B_{Rd}(\mu)\,.
\]
\end{lem}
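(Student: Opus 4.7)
My plan is to reduce both containments to a one-dimensional inequality for log-concave densities of a specific type (arising as marginals of uniform distributions on convex bodies), and then recover the inclusions by applying the 1D bound in every direction. I will assume $\mu = 0$ by translation throughout.

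First, I fix an arbitrary unit vector $v \in \Rd$ and consider the density $g_v$ of the scalar random variable $\langle v, X\rangle$ for $X \sim \pi_\K$, which is supported on $[-a_v, b_v]$ with $b_v = \max_{y \in \K}\langle v, y\rangle$ and $a_v = -\min_{y \in \K}\langle v, y\rangle$. By Brunn's concavity theorem, the slicing volumes $\mathrm{vol}_{d-1}(\K \cap \{\langle v, \cdot\rangle = t\})^{1/(d-1)}$ are concave in $t$, so $g_v^{1/(d-1)}$ is a concave (in particular log-concave) function on its support. The hypotheses translate to $\int t\, g_v(t)\,\D t = 0$ and $\sigma_v^2 \deq \int t^2\, g_v(t)\,\D t = v^\T \Sigma v \in [r^2, R^2]$.

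Second, the key one-dimensional claim is the following: for any density $g$ on $[-a, b]$ with $g^{1/(d-1)}$ concave, mean zero, and variance $\sigma^2$, one has $\min(a, b) \gtrsim \sigma$ and $\max(a, b) \lesssim d\,\sigma$. The extremal case is the ``truncated cone'' density $g(t) \propto (b - t)^{d-1}$ on $[-a, b]$, which is precisely the marginal of a right-circular cone along its axis. The mean-zero condition then forces $b = d a$, and a direct beta-function calculation yields $\sigma^2 = a^2 d/(d+2)$, so $a/\sigma = \sqrt{(d+2)/d}$ and $b/\sigma = \sqrt{d(d+2)}$, matching the desired scaling. Applying this bound in every direction gives $b_v \geq \sigma_v \geq r$ for every unit $v$, so the support function of $\K - \mu$ is at least $r$ everywhere, which is equivalent to $B_r(\mu) \subseteq \K$. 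Conversely, for any $y \in \K \setminus \{\mu\}$, choosing $v = (y - \mu)/\|y - \mu\|$ yields $\|y - \mu\| \leq b_v \lesssim d\,\sigma_v \leq dR$, so $\K \subseteq B_{Rd}(\mu)$.

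The main obstacle is establishing the extremality of the truncated cone: among mean-zero $(d-1)$-concave densities on a bounded interval, this family simultaneously minimizes $\min(a, b)/\sigma$ and maximizes $\max(a, b)/\sigma$. While the explicit cone calculation is elementary, the extremality statement is not, and the cleanest route is to invoke the localization lemma of Lov\'asz-Simonovits, which reduces such moment inequalities on log-concave densities to ``needle-like'' one-parameter distributions of the form $(\alpha + \beta t)^{d-1}_+$; direct optimization then singles out the cone profile as the extremizer. An alternative is a Brunn-type majorization/rearrangement argument that transports mass toward one endpoint without decreasing $\sigma^2$ or changing the mean, but verifying monotonicity of the relevant functionals under the rearrangement is the subtle part.
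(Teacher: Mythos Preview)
The paper does not prove this lemma; it is quoted verbatim from \cite{lovasz2006simulated} and only the inner containment $B_r(\mu)\subset\mc K$ is ever used (in the proof of Lemma~\ref{lem:inner-rad-double}). So there is no in-paper argument to compare against, and your outline is precisely the classical route: project to a direction $v$, use Brunn--Minkowski to obtain a $(d-1)$-concave marginal, and reduce to a one-dimensional moment inequality whose extremizer is the truncated-power (``cone'') profile.

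Two remarks on your write-up. First, your own extremal computation shows $b/\sigma=\sqrt{d(d+2)}>d$, so the outer containment $\mc K\subset B_{Rd}(\mu)$ as stated cannot hold with the sharp constant; the correct radius is $R\sqrt{d(d+2)}$ (or, more crudely, $R(d+1)$). Your use of $\lesssim$ is therefore honest, but it is worth flagging that the statement you are asked to prove is slightly too tight on that side; the inner inclusion $B_r(\mu)\subset\mc K$ is fine since your extremal value $a/\sigma=\sqrt{(d+2)/d}>1$ gives $h_{\mc K-\mu}(v)\ge\sigma_v\ge r$ exactly. Second, the step you label ``the main obstacle'' really is the whole proof: once you accept localization (reducing to affine-power densities $(\alpha+\beta t)_+^{d-1}$), the optimization is the beta computation you already did, so the sketch is complete modulo an invocation of the Lov\'asz--Simonovits localization lemma. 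The alternative rearrangement route you mention is workable but noticeably more delicate to make rigorous than simply citing localization.
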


Lastly, we show that under the update rule of $r_{i+1} \gets 2(1-\nicefrac{1}{\log d})r_i$, the inner radius actually almost doubles if all the covariance estimations thus far have been accurate.
\begin{proof}[Proof of Lemma~\ref{lem:inner-rad-double}]
We show that when $B_{r_{i}}(c_{i})\subseteq\K_{i}$ for some $c_{i}\in\mc K_{i}$, there exists some center $c_{i+1}\in\mc K_{i+1}$ such that $\mc K_{i+1}$
contains $B_{r_{i+1}}(c_{i+1})$ with $r_{i+1}=2(1-\nicefrac{1}{\log d})\,r_{i}$.
By Lemma~\ref{lem:iso-inner-outer-ball}, $\mc K_{i}$ also contains
the ellipsoid $\{x:\norm{x-\mu_{i}}_{\Sigma_{i}^{-1}}^{2}\leq1\}$
for the mean $\mu_{i}$ and covariance $\Sigma_{i}$ of $\mc K_{i}$
with respect to $\pi_{i}$. Recall that
\[
M_{i}=I+P_{i}\,,\quad T_{i+1}=M_{i}T_{i}\,,\quad\Sigma_{i+1}=M_{i}\Sigma_{i}M_{i}\,.
\]
We first focus on the case where the two centers $c_{i}$ and $\mu_{i}$ are different.

Let $\widehat{\Sigma}_{i}=U^{\T}D_{i}U$ be the spectral decomposition of the estimated covariance $\widehat{\Sigma}_{i}$, where $U\in\Rdd$ is an orthogonal matrix, and $D_{i}\in\Rdd$ is a diagonal matrix with eigenvalues on the diagonal
in decreasing order. Under the transformation $x\mapsto y:=M_{i}x$
(i.e., $\mc K_{i}\to\mc K_{i+1}$), the new convex body $\mc K_{i+1}$ contains two ellipsoids: defining $c_{i}':=M_{i}c_{i}$ and $\mu_{i}':=M_{i}\mu_{i}(=\mu_{i+1})$,
\[
\mc A:\{y \in \R^d: (y-c_{i}')^{\T}(I+P_{i})^{-2}(y-c_{i}')\leq r_{i}^{2}\}\,,\qquad\text{and}\qquad\mc B:\{y \in \R^d: (y-\mu_{i}')^{\T}\Sigma_{i+1}^{-1}(y-\mu_{i}')\leq1\}\,.
\]

We now work with a new coordinate system in $z:=Uy$ for the
ease of analysis. Under this new system, there exists $0\leq r\leq d$
such that the $r$ largest eigenvalues of $U\widehat{\Sigma}_{i}U^{\T}=D_{i}$
(corresponding to bases $\{e_{1},\dots,e_{r}\}$) are larger than the
threshold $d$ and that the remaining $d-r$ eigenvalues are smaller than
$d$. Note that under the $z$-coordinate system, $P_{i}$ is given
by 
\[
UP_{i}U^{\T}=\left[\begin{array}{cc}
0_{r\times r}\\
 & 1_{(d-r)\times(d-r)}
\end{array}\right]\ \text{since }D_{i}=\left[\begin{array}{cc}
[\geq d]_{r\times r}\\
 & [<d]_{(d-r)\times(d-r)}
\end{array}\right]\,.
\]
Hence, under the $z$-coordinate system, the two ellipsoids above can be written as
\begin{align}
(I+P_{i})^{2} & \to(I+UP_{i}U^{\T})^{2}=\left[\begin{array}{cc}
1_{r\times r}\\
 & 2_{(d-r)\times(d-r)}
\end{array}\right]^{2}\,,\label{eq:A-ellipsoid}\\
\Sigma_{i+1} & \to U\Sigma_{i+1}U^{\T}=U(I+P_{i})U^{\T}\cdot U\Sigma_{i}U^{\T}\cdot U(I+P_{i})U^{\T}\nonumber \\
 & \quad =\underbrace{\left[\begin{array}{cc}
1_{r\times r}\\
 & 2_{(d-r)\times(d-r)}
\end{array}\right]}_{\eqqcolon\overline{D}}U\Sigma_{i}U^{\T}\left[\begin{array}{cc}
1_{r\times r}\\
 & 2_{(d-r)\times(d-r)}
\end{array}\right]\,.\label{eq:B-ellipsoid}
\end{align}

Let $c_{\mc A}$ and $c_{\mc B}$ denote the centers of $\mc A$-ellipsoid and $\mc B$-ellipsoid in the $z$-coordinate system. 
We define $\mc S=\spanning(\{e_{r+1},\dots,e_{d}\})$ (i.e. the span of the deficient axes) and $\mc S^{\perp}=\spanning(\{e_{1},\dots,e_{r}\})$ (i.e. the span of the sufficient axes). 

\textbf{Case I: $0<r<d$.}
We first show that in the $z$-coordinate system, $\mc K_{i+1}$ contains two lower-dimensional balls
\[
\msf{B}_{\mc S}:= B_{2lr_i}(c_l)\cap \mc S\quad \& \quad \msf{B}_{\mc S^\perp}:= B_{\frac{9(1-l)}{10}\,d^{1/2}}(c_l) \cap \mc S^\perp\,, 
\]
where $c_{l}\coloneqq lc_{\mc A}+(1-l)c_{\mc B}$ and $l \coloneqq (1-\nicefrac{1}{\log d})^{1/2}$.
 
By~\eqref{eq:A-ellipsoid}, the part of the $\mc A$-ellipsoid along $\mc S$ (i.e., $\mc A\cap(\mc S+c_{\mc A})$) contains $B_{2r_{i}}(c_{\mc A})$.
By convexity of $\mc K_{i+1}$, it contains $B_{2lr_{i}}(c_{l})$ along the hyperplane $\mc S+c_{\ell}$, since $\mc K_{i+1}$ contains $c_{\mc B}$.
As for $\mc S^\perp$, by Lemma~\ref{lem:cov-estimation-app1}, with probability at least $1-d^{-1}$, $\widehat{\Sigma}_{i}\preceq\tfrac{11}{10}\Sigma_{i}+\frac{d}{100}\,I_{d}$.
Conjugating by $U^{\T}\overline{D}$, this is equivalent to 
\[
\overline{D}D_{i}\overline{D}\preceq\frac{11}{10}\,\overline{D}U\Sigma_{i}U^{\T}\overline{D}+\frac{d}{100}\,\overline{D}^{2}
\underset{\eqref{eq:B-ellipsoid}}{=}\frac{11}{10}\,U\Sigma_{i+1}U^{\T}+\frac{d}{100}\,\overline{D}^{2}\,.
\]
Then the projection onto $\mc S^{\perp}$
(i.e., taking the top-left $r\times r$ block matrix) results in
\[
(U\Sigma_{i+1}U^{\T})\vert_{\mc S^{\perp}}\succeq\frac{9}{10}\,dI_{r}\,,
\]
so the $\mc B$-ellipsoid along $\mc S^{\perp}$ contains $B_{0.9 d^{1/2}}(c_{\mc B})$.
Since the $\mc A$-ellipsoid along $\mc S^{\perp}$ contains $B_{r_{i}}(c_{\mc A})$
as seen in~\eqref{eq:A-ellipsoid}, the convexity of $\mc K_{i+1}$
implies that it contains a ball centered at $c_{l}$ of radius $lr_{i}+(1-l)\cdot \tfrac{9}{10} d^{1/2} \ge \tfrac{9(1-l)}{10}d^{1/2}$.

We now prove that $\K_{i+1}$ contains a ball of radius roughly $2r_i$ centered at $c_l$, by showing that any point in such a ball can be written as a convex combination of $\msf{B}_{\mc S}$ and $\msf{B}_{\mc S^\perp}$.
More specifically, we may assume $c_{l}=0$ by translation and denote by $P$ the projection to $\mc S=\mc S + c_l$. For any $x$ with $\norm x\leq2clr_{i}$ and $c$ to be determined, we write 
\[
x=(1-t)\,\frac{(I-P)x}{1-t}+t\,\frac{Px}{t}\quad\text{for }t\in(0,1)\,.
\]
For $u:=(I-P)x$ and $v:=Px$, we will show that if $\norm u^{2}+\norm v^{2}=\norm x^{2}\leq4c^{2}l^{2}r_{i}^{2}$, then 
\begin{align*}
    &\exists t\in(0,1):\quad \frac{\norm u}{1-t}\leq\frac{9(1-l)}{10}\,d^{1/2}\qquad\text{and}\qquad\frac{\norm v}{t}\leq2lr_{i}\\
    \Longleftrightarrow\ &\exists t\in(0,1): \frac{\norm v}{2lr_{i}}\leq t\leq1-\frac{10\norm u}{9(1-l)d^{1/2}}\\
    \Longleftrightarrow\ &\frac{10\norm u}{9(1-l)d^{1/2}}+\frac{\norm v}{2lr_{i}}=\frac{20lr_{i}}{9(1-l)d^{1/2}}\,a+b \leq 1 \text{ for } a:=\frac{\norm u}{2lr_{i}}\text{ and } b:=\frac{\norm v}{2lr_{i}}\,.
\end{align*}
To this end, it suffices to show that the maximum of the following problem is at most $1$:
\[
\max\frac{20lr_{i}}{9(1-l)d^{1/2}}\,a+b\quad\text{subject to }a^{2}+b^{2}=c^{2}\,.
\]
Using the Lagrange multiplier method, the maximum is attained if $(a,b)=\lda(\tfrac{20lr_{i}}{9(1-l)d^{1/2}},1)$ for some $\lda>0$. Solving $a^2 + b^2 = c^2$ with this condition, we have $\lda=c(1+\tfrac{400l^{2}r_{i}^{2}}{81(1-l)^{2}d})^{-1/2}$. Thus, the maximum is
\[
\frac{20lr_{i}}{9(1-l)d^{1/2}}\,a+b=c\,\bpar{1+\frac{400l^{2}r_{i}^{2}}{81(1-l)^{2}d}}^{1/2}\,,
\]
which can be bounded by $1$ by setting $c=(1+\frac{400l^{2}r_{i}^{2}}{81(1-l)^{2}d})^{-1/2}$.

We now show that $c^{-2} \leq l^{-2}$ (so $c\geq l$). Using $2^{10}r_{i}^{2}\log^{4}d\leq d$ and $l\leq 1-\nicefrac{1}{2\log d}\leq 1$, we have 
\[
c^{-2} = 1+\frac{400l^{2}r_{i}^{2}}{81(1-l)^{2}d}
\leq 1+\frac{400l^{2}}{81\cdot2^{10}(1-l)^{2}\log^{4}d}
\leq 1+\frac{1}{12\log^{2}d}\,.
\]
As $l^{-2} =1 + \frac{1}{\log d -1}$, we clearly have $c^{-2}\leq l^{-2}$. Therefore,
\[
\msf{inrad}(\mc K_{i+1}) \geq 2clr_i \geq 2l^2r_i.
\]
\textbf{Case II: $r=0$ or $d$.}
When $r=0$, the $\mc A$-ellipsoid is simply the $d$-dimensional
$B_{2r_{i}}(c_{\mc A})$. If $r=d$, then it means that the $\mc B$-ellipsoid
contains the $d$-dimensional ball $B_{\frac{9}{10}d^{1/2}}(c_{\mc B})$.
Since $2^{10}r_{i}^{2}\log^{4}d\leq d$, we have $\frac{9}{10}d^{1/2}\geq2l^{2}r_{i}$, where $l$ is as before. Combining these two cases justifies $\msf{inrad}(\K_{i+1})\geq2l^{2}r_{i}$.

Lastly, if the two centers $c_i$ and $\mu_i$ are the same, then $c_{\mc A} = c_{\mc B}$. In such case, the same argument goes through (with an even larger radius).
\end{proof}

\paragraph{Final guarantee.}
Putting these together, we can prove our main result in this section.

\begin{proof}[{Proof of Lemma~\ref{lem:rounding-final}}]
We would like to exclude three bad events --- (1) failure of $\gc$ in Line~\ref{line:GC-warm}, (2) failure of $\ino$ throughout the algorithm, and (3) failure of covariance estimation in Line~\ref{line:mean-cov-estimate} within each while-loop and in Line~\ref{line:last-estimation}.

As for (1), we can simply set the failure of the warm-start generation to $\nicefrac{\fp}{d}$ with logarithmic overhead in $d$ \cite{kook2024renyi}. As for (2), we already picked $h_i$ and $N_i$ in Line~\ref{line:INO-setting} so that the total failure probability of $\ino$ is at most $\nicefrac{\fp}{d}$ (see the proof of Lemma~\ref{lem:INO-setting}).
As for (3), by Lemma~\ref{lem:basic_property_isotropization}, the failure probability throughout the while-loop is at most $\nicefrac{\fp\log d}{d}$ by the union bound. As for the final covariance estimation in Line~\ref{line:last-estimation}, by the multiplicative
form of covariance estimation guarantees (Corollary~\ref{cor:mul-cov-Poincare}),
there exists a universal constant $c'>0$ such that $\Abs{\widehat{\Sigma}-\Sigma}\preceq\frac{1}{10}\,\Sigma$ with probability at least $1-\frac{\fp}{d}$ when $N\geq c'd\log^{6}d$.

Putting all things together, the algorithm succeeds with probability
at least $1-\nicefrac{\fp \log d}{d}-\nicefrac{\fp}{d} \geq1-\nicefrac{\fp}{\sqrt{d}}$, ensuring that the covariance of the transformed convex body satisfies 
\[
\frac{9}{10}\,I_{d}\preceq\widehat{\Sigma}^{-1/2}\Sigma\widehat{\Sigma}^{-1/2}\preceq\frac{11}{10}\,I_{d}\,.
\]
Therefore, $\widehat{\Sigma}^{-1/2}(\mc K - \widehat{\mu})$ is $2$-isotropic.

As for the query complexity, Algorithm~\ref{alg:iterative_rounding}
starts with $\gc$ for the warm-start generation, using $\Otilde(C^{2}d^{3})$ membership queries. In each while-loop,
since $\ino$ from an $\O(1)$-warm start iterates $k_{i}N_{i}=\Otilde(cC^{4}d^{3})=\Otilde(C^{4}d^{3})$
times, it uses $\Otilde(C^{4}d^{3})$ membership queries. Since the
number of while-loops is at most $2\log d$, the total query complexity
is $\Otilde(C^{4}d^{3})$.
\end{proof}

\subsection{Covariance estimation for unconstrained distributions}\label{app:unconstrained}

The following ensures that we can tractably initialize our algorithm not too far from the target.
\begin{lem}[{Initialization,~\cite[Lemma 32]{chewi2021analysis}}]\label{lem:initialization}
    Suppose $\nabla V(0) = 0$ and $V$ is $\beta$-smooth (Assumption~\ref{as:smoothness}). Let  $\pi \propto \exp(-V)$ and $\mf m = \E_\pi \norm{\cdot}$. Then, initialization $\mu_0 = \mc N(0,\tfrac{1}{2\beta}I_d)$ satisfies
    \[
        \log \chi^2(\mu_0 \mmid \pi) \lesssim 1 + \beta + V(0) - \min V + d \log (\mf m^2 \beta)\,.
    \]
\end{lem}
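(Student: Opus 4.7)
The plan is a direct calculation of $\chi^{2}(\mu_{0}\mmid\pi)+1=\int \mu_{0}^{2}/\pi\,\D x$ followed by two standard estimates: one that exploits the $\beta$-smoothness of $V$ to control the integrand, and one that uses the first-moment hypothesis $\E_{\pi}\norm{\cdot}=\mf m$ to control the normalizing constant. Writing $\mu_{0}(x)=(\beta/\pi)^{d/2}\exp(-\beta\norm{x}^{2})$ and $\pi(x)=\exp(-V(x))/Z$ with $Z\deq \int\exp(-V)$, the chi-squared identity becomes
\[
\chi^{2}(\mu_{0}\mmid\pi)+1 \;=\; \Bpar{\frac{\beta}{\pi}}^{d}\,Z\int\exp\bpar{V(x)-2\beta\norm{x}^{2}}\,\D x\,.
\]

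The first step is to bound the integrand. Since $V$ is $\beta$-smooth and $\nabla V(0)=0$, the descent lemma gives $V(x)\leq V(0)+\tfrac{\beta}{2}\norm{x}^{2}$, so $V(x)-2\beta\norm{x}^{2}\leq V(0)-\tfrac{3\beta}{2}\norm{x}^{2}$. The integral is then dominated by a Gaussian integral with value $(2\pi/(3\beta))^{d/2}$, yielding after simplification
\[
\chi^{2}(\mu_{0}\mmid\pi)+1 \;\leq\; Z\,\e^{V(0)}\,(2\beta/(3\pi))^{d/2}\cdot(2/3)^{d/2} \;\lesssim\; Z\,\e^{V(0)}\,\beta^{d/2}\cdot c^{d}
\]
for some absolute constant $c$.

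The second step is to upper-bound $Z$ via the first-moment hypothesis. By Markov's inequality, $\pi(B_{2\mf m}(0))\geq \tfrac{1}{2}$, so
\[
Z \;\leq\; 2\int_{B_{2\mf m}(0)}\e^{-V(x)}\,\D x \;\leq\; 2\,\e^{-\min V}\cdot \vol(B_{2\mf m}(0))\,,
\]
using only $V\geq \min V$. The standard Stirling-based bound on the volume of the Euclidean ball gives $\log\vol(B_{2\mf m}(0))\leq \tfrac{d}{2}\log(C\mf m^{2}/d)+O(\log d)$ for some absolute $C$, so that $\log Z \leq -\min V + \tfrac{d}{2}\log(C\mf m^{2}/d) + O(\log d)$.

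Combining everything by taking logarithms yields
\[
\log\bpar{\chi^{2}(\mu_{0}\mmid\pi)+1} \;\lesssim\; V(0)-\min V + \tfrac{d}{2}\log(C'\mf m^{2}\beta/d) + O(\log d)\,,
\]
which is majorized by the claimed bound $1+\beta+V(0)-\min V+d\log(\mf m^{2}\beta)$ after absorbing $\log d$-factors and the $1/d$ inside the logarithm into slack. There is no real obstacle; the only places requiring care are making sure the correct constant appears in $\mc N(0,(2\beta)^{-1}I_{d})$ when squaring the density (so the Gaussian tail $\exp(-2\beta\norm{x}^{2})$ dominates the $\exp(\tfrac{\beta}{2}\norm{x}^{2})$ growth coming from the smoothness bound) and correctly applying Stirling to the Gamma function in $\vol(B_{R})=\pi^{d/2}R^{d}/\Gamma(d/2+1)$.
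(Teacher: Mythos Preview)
The paper does not supply its own proof of this lemma; it simply quotes the result from \cite[Lemma 32]{chewi2021analysis}. Your argument is the standard one used in that reference: write $\chi^{2}(\mu_{0}\mmid\pi)+1=\int\mu_{0}^{2}/\pi$, upper-bound the integrand by the descent lemma $V(x)\le V(0)+\tfrac{\beta}{2}\norm{x}^{2}$, and control the normalizer $Z$ via Markov's inequality on $\norm{X}$ together with the ball volume formula. So in spirit your approach coincides with the cited source.

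One small algebraic slip: after the Gaussian integration you obtain
\[
\chi^{2}(\mu_{0}\mmid\pi)+1 \;\le\; Z\,e^{V(0)}\,\Bigl(\frac{\beta}{\pi}\Bigr)^{d}\Bigl(\frac{2\pi}{3\beta}\Bigr)^{d/2}
= Z\,e^{V(0)}\,\Bigl(\frac{2\beta}{3\pi}\Bigr)^{d/2},
\]
i.e.\ a single factor $(2\beta/(3\pi))^{d/2}$, not that times an additional $(2/3)^{d/2}$ as you wrote. This does not affect the conclusion, since either way the result is $Z\,e^{V(0)}\beta^{d/2}c^{d}$ for an absolute $c$. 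Everything else---the Markov bound $\pi(B_{2\mf m})\ge\tfrac12$, the Stirling estimate on $\vol(B_{2\mf m})$, and the final recombination---is correct, and the bound you derive, $V(0)-\min V+\tfrac{d}{2}\log(C'\mf m^{2}\beta/d)+O(\log d)$, is in fact slightly sharper than the stated one.
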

The assumption that $\nabla V(0) = 0$ is standard, as a local extremum can be found by optimization algorithms, which have complexity dominated by that of the sampling procedure. Assuming mild values for all parameters of interest, this gives a bound of $\Otilde(d \vee \beta)$.

\begin{proof}[{Proof of Lemma~\ref{lem:alg-iid}}]
    Here, we elucidate a few details when adapting the results of~\cite{altschuler2024faster}. The following steps are already present in their analysis of the proximal sampler, but we reemphasize the derivation here for full clarity. Namely, by the analysis of~\cite[Appendix D.4]{altschuler2024faster}, the condition number of the backwards part of the proximal sampler is $\Theta(1)$. Furthermore, since the forward part of the proximal sampler is implemented exactly, then denoting $P_{\mrm{forward}}, P_{\mrm{backward}}$ for the two exact kernels with step size $h \asymp 1/\beta$, and $\hat P_{\mrm{backward}}$ for the approximate kernel from~\cite{altschuler2024faster},
    \[
        \KL(\delta_x \hat P \mmid \delta_x P) \leq \KL(\delta_x P_{\mrm{forward}} \mmid \delta_x P_{\mrm{forward}}) + \E_{y \sim \delta_x P_{\mrm{forward}}} \KL(\delta_y \hat P_{\mrm{backward}} \mmid \delta_y P_{\mrm{backward}}) \leq \varepsilon\,,
    \]
    using~\cite[Theorem D.1]{altschuler2024faster} as claimed. 
\end{proof}

\begin{proof}[Proof of Lemma~\ref{lem:tv-unconstrained-chain}]
    For any $z_1 \in \R^d$ and $m \in \mathbb N$, using the data-processing inequality and then the chain rule for $\KL$,
    \begin{align*}
        \KL(\delta_{z_1} \hat P^m \mmid \delta_{z_1} P^m) 
        &\leq \KL(\delta_{z_1} \hat P \mmid \delta_{z_1} P) + \E_{z_2 \sim \delta_{z_1} \hat P} \KL(\delta_{z_2} \hat P \mmid \delta_{z_2} \hat P) \\
        &\qquad + \cdots + \E_{z_m \sim \delta_{z_1} \hat P^{m-1}} \KL(\delta_{z_m} \hat P \mmid \delta_{z_m} \hat P)\,.
    \end{align*}
    This heavily uses the fact that the joint distribution arises from a Markov chain.
    
    The query complexity to bound the left side by $\varsigma \in (0, \nicefrac{1}{2}]$ is $\Otilde(m d^{1/2} \log^3 \frac{1}{\varsigma})$, obtained by choosing an error of $\varsigma/m$ for the kernel and then applying Lemma~\ref{lem:alg-iid} $m$-times. Then,
    \begin{align*}
        2\,\norm{\nu_{1:K} - \hat \nu_{1:K}}_{\tv}^2 &\leq \KL(\hat \nu_{1:K} \mmid \nu_{1:K}) \\
        &\leq \KL(\hat \nu \mmid \nu) + \E_{x_1 \sim \hat \nu} \KL(\delta_{x_1} \hat P^n \mmid \delta_{x_1} P^n) \\
        &\qquad + \cdots + \E_{x_{K-1} \sim \hat \nu \hat P^{(K-1)n}} \KL(\delta_{x_{K-1}} \hat P^{n} \mmid \delta_{x_{K-1}} P^n)\,. 
    \end{align*}
    Therefore, the query complexity of bounding the left side by $2\delta^2$ is  $\Otilde((Kn + n_0) d^{1/2} \log^3 \frac{Kn+n_0}{\delta})$.
\end{proof}

\begin{proof}[Proof of Theorem~\ref{thm:unconstrained-covar} and Remark~\ref{rmk:unconstrained-iid}]

    For the choices of $h \asymp 1/\beta$, $n = \O(\kappa)$ given, the spectral gap of $P^n$ will be bounded below by $0.99$. Thus, choosing $K = \widetilde \Theta(\frac{d \phi}{\varepsilon^2})$, the covariance estimator using samples from the exact kernel $P^n$ will concentrate around $\Sigma$. Precisely, if
    \[
        \overline \Sigma = \frac{1}{K} \sum_{j=1}^K \Bigl(Y_j - \frac{1}{K} \sum_{k=1}^K Y_k\Bigr)^{\otimes 2} \eqqcolon F(Y_1, \ldots, Y_K)\,, 
    \]
    where $Y_1 \sim \nu$ and $Y_{i+1} \sim \delta_{Y_i} P^n$, then Theorem~\ref{thm:mul-cov-Poincare} combined with Lemma~\ref{lem:change-measure} tell us for our choice of parameters that with probability $1-\nicefrac{2}{3d}$,
    \[
        \norm{\overline \Sigma - \Sigma}_{\op} \leq \veps\norm{\Sigma}_{\op}\,.
    \]
    Here, we use that our initial distribution $\nu = \mu_0 P^{n_0}$ comes from applying $P$ enough iterations that $\chi^2(\nu \mmid \pi) \lesssim 1$, and so via Lemma~\ref{lem:initialization} our choice of $n_0$ is sufficient. Note that we can also write $\hat \Sigma = F(X_1, \ldots, X_k)$, where $\{X_j\}_{j \leq K}$ is generated using $\hat P^n$ instead.

    Denote the undesirable event on $\R^{K \times d}$ by
    \[
        \mc A \deq \bigl\{(X_1, \ldots, X_K) \in \R^{d\times K}: \norm{F(X_1, \ldots, X_K)- \Sigma}_{\op} > \varepsilon \norm{\Sigma}_{\op} \bigr\}\,.
    \]
    Under the approximate kernel $\hat P^n$, we have via Lemma~\ref{lem:tv-unconstrained-chain} and the definition of the $\tv$ distance that
    \[
        \hat \nu_{1:K}(\mc A) \leq \nu_{1:K}(\mc A) + \abs{\nu_{1:K}(\mc A) - \hat \nu_{1:K}(\mc A)}
        \leq \nu_{1:K}(\mc A) + \norm{\nu_{1:K} - \hat \nu_{1:K}}_{\tv}\,.
    \]
    Choosing $\delta \asymp \nicefrac{\fp}{d}$ in Lemma~\ref{lem:tv-unconstrained-chain} to make this $\tv$ distance bounded by $\nicefrac{\fp}{d}$, and adding it to our bound under $\nu_{1:K}$ from earlier, we obtain a bound of $1-\nicefrac{\fp}{d}$ on the probability of failing to estimate the covariance under the approximate implementation of the kernel.
    
    The total expected query complexity is then given by
    \[
        N \leq \Otilde\bigl(K \kappa d^{1/2} \log^3(K+n_0)\bigr) + \Otilde\bigl(n_0 d^{1/2} \log^3(K + n_0)\bigr)\,,
    \]
    where the second term is the cost of generating the initial iterate, and the first term comes from the generation of the remaining iterates. Putting all these together concludes the proof.

    For the bound given in Remark~\ref{rmk:unconstrained-iid}, we note that we will again need $K = \widetilde \Theta(\frac{d \phi}{\varepsilon^2})$ iterates in total. By the same principle as Lemma~\ref{lem:tv-unconstrained-chain}, it suffices that each iterate is $\mc O(\frac{1}{Kd})$ close to the target in $\sqrt{\KL}$, and we should again choose $\varrho$ identically to the Markovian case. To produce a single iterate, we require $\Otilde(n_0 d^{1/2} \polylog (K+ n_0))$ queries in expectation. Putting these all together concludes the proof.
\end{proof}

\section{Concluding remarks}

In summary, we have established in this work that Markov chains can estimate the covariance of a target distribution under the joint assumptions of~\eqref{eq:poincare} and a spectral gap. We demonstrated two applications where this has a substantial benefit in terms of query complexity bounds. In particular, we demonstrated its relevance to an iterative rounding algorithm, which is a cornerstone of the literature for convex body sampling.

We foresee several primary extensions to our efforts that may be worthwhile. Firstly, in the proof of Lemma~\ref{lem:covar_termBC}, we only obtained a polynomial tail bound via Markov's inequality inequality due to dependence of samples, whereas~\cite{adamczak2010quantitative} was able to find a subexponential bound. Closing this gap would render our result completely analogous with the prior literature in the i.i.d. setting.

An interesting complementary approach may be to establish a Poincar\'e inequality or other functional inequality directly on the joint distribution of the iterates $\{X_j\}_{j \leq N}$. Note that in the i.i.d. case, a Poincar\'e inequality follows immediately by tensorization.

Finally, one may extend this to higher-order moment tensors, or even to exponential quantities. To this end, it may be interesting to consider what additional guarantees may follow as a result of a log-Sobolev inequality (LSI). LSIs are the other primary functional inequality considered in sampling, and generally lead to stronger concentration results.

\bibliographystyle{alpha}
\bibliography{main}

\appendix

\section{Finer bound on the tail}\label{app:finer-tail}

\begin{lem}
Under the same assumptions as Lemma~\ref{lem:covar_termA}, we in fact have the improved bound  
\[
\P\Bpar{\frac{1}{N}\sum_{i}\norm{X_{i}}^{2}\ind_{B^{c}}-\E\Bbrack{\frac{1}{N}\sum_{i}\norm{X_{i}}^{2}\ind_{B^{c}}}\geq s\rho}\leq\frac{1}{s^{2}}\,.\qedhere
\]
Here, $\rho$ can be bounded as $
\rho^{2}\leq\frac{1}{N}(1+\frac{2}{\lambda})(\tr\Sigma+C_{\PI}(\pi))^{2}e^{-t/2}.$
\end{lem}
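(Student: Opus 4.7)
The plan is to upgrade Lemma~\ref{lem:covar_termBC}'s Markov-inequality argument to a Chebyshev-inequality argument, where the variance of the empirical sum along the Markov chain is controlled using reversibility and the spectral gap, and the resulting stationary variance is bounded by Cauchy--Schwarz together with Poincar\'e moment estimates.

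Set $f(x) = \norm{x}^{2}\ind_{B^{c}}(x)$ and $S = \frac{1}{N}\sum_{i=1}^{N} f(X_{i})$. By Chebyshev's inequality, $\P(|S-\E S| \geq s\rho) \leq \Var(S)/(s\rho)^{2}$, so the target $1/s^{2}$ bound follows by choosing $\rho^{2} = \Var(S)$. To bound $\Var(S)$, note that since $P$ is reversible and started at stationarity, for $g \deq f - \E_{\pi}f$ one has $\Cov_{\pi}(f(X_{i}), f(X_{j})) = \langle g, P^{|i-j|} g\rangle_{\pi} \leq (1-\lambda)^{|i-j|}\Var_{\pi}(f)$ via the spectral gap. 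Summing the geometric series,
\[
\Var(S) \leq \frac{\Var_{\pi}(f)}{N^{2}} \sum_{i,j=1}^{N} (1-\lambda)^{|i-j|} \leq \frac{\Var_{\pi}(f)}{N}\cdot\frac{2-\lambda}{\lambda} \leq \frac{1}{N}\Bpar{1 + \frac{2}{\lambda}}\Var_{\pi}(f).
\]

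It remains to show $\Var_{\pi}(f) \leq (\tr\Sigma + C_{\PI}(\pi))^{2} e^{-t/2}$. Drop the variance to a second moment and apply Cauchy--Schwarz:
\[
\Var_{\pi}(f) \leq \E_{\pi}\bbrack{\norm{X}^{4}\ind_{B^{c}}} \leq \bpar{\E_{\pi}[\norm{X}^{8}]}^{1/2} \bpar{\P(\norm{X}\geq t\sqrt{\tr\Sigma})}^{1/2}.
\]
The tail factor is controlled by Lemma~\ref{lem:tail-Poincare}, which for $t\geq 2$ gives $\P(\norm{X}\geq t\sqrt{\tr\Sigma}) \lesssim \exp(-\tfrac{t}{2}\sqrt{\tr\Sigma/C_{\PI}(\pi)})$; one then reads off $e^{-t/2}$ (absorbing the $\sqrt{\tr\Sigma/C_{\PI}}$ factor into the exponent as in the statement). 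For the eighth moment, either iterate Poincar\'e as in Lemma~\ref{lem:4th-Poincare} (apply~\eqref{eq:poincare} to $\norm{X}^{4}$, giving $\E[\norm{X}^{8}] \leq (\E[\norm{X}^{4}])^{2} + 16 C_{\PI}(\pi) \E[\norm{X}^{6}]$, and resolve the quadratic in $\sqrt{\E[\norm{X}^{8}]}$ using Cauchy--Schwarz to interpolate the sixth moment) or, more cleanly, use the standard Poincar\'e moment estimate $\E_{\pi}[\norm{X}^{p}]^{1/p} \lesssim \sqrt{\tr\Sigma} + p\sqrt{C_{\PI}(\pi)}$ derived by integrating the Lipschitz tail~\eqref{eq:poincare-Lip-con}. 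Either yields $\sqrt{\E_{\pi}[\norm{X}^{8}]} \lesssim (\tr\Sigma + C_{\PI}(\pi))^{2}$, and combining the two factors closes the bound on $\Var_{\pi}(f)$.

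The main obstacle is pinning down the precise form of the $e^{-t/2}$ factor: the cleanest Poincar\'e tail carries a $\sqrt{\tr\Sigma/C_{\PI}}$ in the exponent, and absorbing it to match the statement implicitly requires the regime $\tr\Sigma \gtrsim C_{\PI}(\pi)$ (otherwise the stated bound is a simplified form of a slightly more refined estimate). A secondary nuisance is the self-referential character of the iterated-Poincar\'e route to $\E[\norm{X}^{8}]$, which the alternative Lipschitz-integration route avoids entirely. Once these two pieces are in hand, the rest of the argument is just Chebyshev plus a geometric series.
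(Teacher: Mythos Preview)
Your proposal is correct and follows essentially the same route as the paper: Chebyshev in place of Markov, a variance bound on the empirical sum via the spectral-gap-controlled covariances, and then $\Var_\pi(f)\le \E[\norm{X}^4\ind_{B^c}]\le\sqrt{\E\norm{X}^8}\sqrt{\P(B^c)}$ handled by Poincar\'e moment and tail estimates. Your direct geometric-series bound $\langle g,P^{|i-j|}g\rangle\le(1-\lambda)^{|i-j|}\Var_\pi f$ is in fact a slight streamlining of the paper's even/odd split, and your caveat about the $e^{-t/2}$ exponent (versus $e^{-(t/2)\sqrt{\tr\Sigma/\cpi}}$) is exactly the simplification the paper also makes without comment.
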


\begin{proof}
Note that 
\[
\Bnorm{\frac{1}{N}\sum X_{i}^{\otimes2}\ind_{B^{c}}}\leq\frac{1}{N}\sum\sup_{v}|X_{i}\cdot v|^{2}\ind_{B^{c}}=\frac{1}{N}\sum_{i}\norm{X_{i}}^{2}\ind_{B^{c}}\,.
\]
Consider the centered function 
$
f(X_{i})\deq\{\norm{X_{i}}^{2}\ind_{B^{c}}-\E\norm{X_{i}}^{2}\ind_{B^{c}}\}$,
in which case we can write 
\begin{align*}
\E\Bigl[\Big\{ & \frac{1}{N}\sum_{i=1}^{N}\norm{X_{i}}^{2}\ind_{B^{c}}-\E\Bbrack{\frac{1}{N}\sum_{i=1}^{N}\norm{X_{i}}^{2}\ind_{B^{c}}}\Bigr\}^{2}\Bigr]\\
 & =\frac{1}{N^{2}}\E[(\sum_{i=1}^{N}f(X_{i}))^{2}] =\frac{1}{N^{2}}\sum_{i=1}^{N}\E[(f(X_{i}))^{2}]+\frac{2}{N^{2}}\sum_{i=1}^{N}\E[\sum_{\substack{j>i\\
j\in[N]
}
}f(X_{i})f(X_{j})]\\
 & =\frac{1}{N}\E[(f(X_{i}))^{2}]+\frac{2}{N^{2}}\sum_{i=1}^{N}\E[f(X_{i})\sum_{\substack{j>i\\
j\in[N]
}
}\E[f(X_{j})|X_{i}]]\\
 & =\frac{1}{N}\E[(f(X_{i}))^{2}]+\frac{2}{N^{2}}\sum_{i=1}^{N}\E[f(X_{i})\sum_{\substack{k>0\\
k\in[N-i]
}
}P^{k}f(X_{i})]\,.
\end{align*}
Consider just the second term. If we observe just the even summands, we see that they can be bounded as
\[
\E\Bigl[f(X_{i})\sum_{\substack{k>0\\
k\in[N-i]
}
}P^{2k}f(X_{i})\Bigr]=\sum_{\substack{k>0\\
k\in[\lfloor{N-i}\rfloor]
}
}\E[\abs{P^{k}f}^{2}]\leq\sum_{k>0}\lambda^{k}\E[\abs{f}^{2}]=\frac{1}{1-\lambda}\,\E[\abs{f}^{2}]\,,
\]
The odd terms can be handled via Cauchy--Schwarz, $\E[fP^{k+1}f]\leq\sqrt{\E f^{2}\E[(P^{2k+2}f)^{2}]}\leq\lambda^{k+1}\E[f^{2}]$.
Thus, the effect is to approximately double the bound above.

Note that $\E[(f(X_{i}))^{2}]$ can be bounded by 
\begin{align*}
\E[\norm{X_{i}}^{4}\ind_{B^{c}}] & =\E[\norm X^{4}\ind_{B^{c}}]\leq\sqrt{\E[\norm X^{8}]}\sqrt{\P\bpar{\norm X\geq t\sqrt{\tr\Sigma}}}\\
 & \lesssim\bigl(\tr\Sigma+C_{\PI}(\pi)\bigr)^{2}\cdot e^{-t/2}\,,
\end{align*}
since 
\[
\Pr(\norm X-\E\norm X>t)\lesssim\exp\bigl(-\frac{t}{\sqrt{C_{\PI}(\pi)}}\bigr)\,,
\]
and integrating, bounding $\E\norm X\lesssim\sqrt{\tr\Sigma}$. The
mean can be bounded by 
\begin{align*}
\E\Bbrack{\frac{1}{N}\sum_{i}\norm{X_{i}}^{2}\ind_{B^{c}}} & =\E[\norm X^{2}\ind_{B^{c}}]\leq\sqrt{\E[\norm X^{4}]}\sqrt{\P\bpar{\norm X\geq t\sqrt{\tr\Sigma}}}\lesssim\tr\Sigma\cdot e^{-t/2}\,.
\end{align*}
This implies that 
\[
\rho^{2}:=\var\Bpar{\frac{1}{N}\sum_{i=1}^{N}f}\leq\frac{1}{N}\bigl(1+\frac{2}{\lambda}\bigr)\bigl(\tr\Sigma+C_{\PI}(\pi)\bigr)^{2}e^{-t/2}\,.
\]
Recall that $\lambda$ is an absolute constant which is bounded
away from zero. 

Finally, by Chebyshev's inequality, 
\[
\P\Bpar{\frac{1}{N}\sum_{i}\norm{X_{i}}^{2}\ind_{B^{c}}-\E\Bbrack{\frac{1}{N}\sum_{i}\norm{X_{i}}^{2}\ind_{B^{c}}}\geq s\rho}\leq\frac{1}{s^{2}}\,.\qedhere
\]
\end{proof}

\end{document}